\begin{document}

\title[3D Koch-type crystals]{3D Koch-type crystals}
\author{Giovanni Ferrer}
\author{Alejandro V\'elez\,-\,Santiago}

\address{Giovanni Ferrer\hfill\break
Department of Mathematics\\ The Ohio State University, 100 Math Tower, 231 West 18th Avenue, Columbus, OH, 43210, U.S.A.}
\email{ferrer.40@osu.edu}

\address{Alejandro V\'elez\,-\,Santiago\hfill\break
Department of Mathematical Sciences\\ University of Puerto Rico at Mayag\"uez\\ Mayag\"uez, PR 00681}
\email{alejandro.velez2@upr.edu,\,\,\,dr.velez.santiago@gmail.com}

\subjclass[2010]{28A80, 28A78, 37F35}
\keywords{Koch surface, Koch crystal, Hausdorff measure, Hausdorff dimension, Self-similarity}


\begin{abstract} We consider the construction of a family $\{K_N\}$ of $3$-dimensional Koch-type surfaces, with a corresponding family of $3$-dimensional Koch-type ``snowflake analogues" $\{\mathcal{C}_N\}$, where $N>1$ are integers with $N \not\equiv 0 \,(\bmod\,\, 3)$. We first establish that the Koch surfaces $K_N$ are $s_N$-sets with respect to the $s_N$-dimensional Hausdorff measure, for $s_N=\log(N^2+2)/\log(N)$ the Hausdorff dimension of each Koch-type surface $K_N$. Using self-similarity, one deduces that the same result holds for each Koch-type crystal $\mathcal{C}_N$. We then develop lower and upper approximation monotonic sequences converging to the $s_N$-dimensional Hausdorff measure on each Koch-type surface $K_N$, and consequently, one obtains upper and lower bounds for the Hausdorff measure for each set $\mathcal{C}_N$. As an application, we consider the realization of Robin boundary value problems over the Koch-type crystals $\mathcal{C}_N$, for $N>2$.
\end{abstract}

\maketitle

\section{Introduction}\label{sec1}

\indent The aim of this paper is to give rise to  $3$-dimensional Koch-type fractal sets which exhibit some analogies in some sense to both the Koch curve and the Koch snowflake. These $3$-dimensional fractal sets will be called Koch $N$-surfaces and Koch $N$-crystals, respectively (see Section \ref{sec3} for illustrations and precise definitions of these sets). Although the geometry of these sets and the corresponding pre-fractal sets may have been considered and visualized, in our knowledge, there is no concrete mathematical construction and analysis of Koch-type surfaces and Koch-type crystals, up to the present time. Using geometric and self-similarity tools, we deduce the generation of a family of compact invariant self-similar sets, which correspond precisely to Koch $N$-surfaces $K_N$ (for $N\in\mathbb{N\!}\,$ with $N \not\equiv 0 \,(\bmod\,\, 3)$). From here, using standard methods as in \cite{falconer1,HUT81,MAT}, we compute the Hausdorff dimension $s_N$ of each Koch $N$-surface $K_N$, and obtain that $\{K_N\}$ form a family of $s_N$-set with respect to the $s_N$-dimensional Hausdorff measure. The self-similar properties of each $K_N$ lead to the construction of a family of Koch $N$-crystals $\{\mathcal{C}_N\}$, whose boundaries (in the case $N>2$) are also $s_N$-sets with respect to the same values $s_N$ and same measures. In particular, when $N>2$ with $N \not\equiv 0 \,(\bmod\,\, 3)$, the crystals $\{\mathcal{C}_N\}$ can be regarded as a family of open connected domains with Koch-type fractal boundaries. This plays an important role in certain applications, which we will consider at the end of the paper.\\
\indent We then generalize tools developed by Jia \cite{bao1,bao2} (for $2$-dimensional fractals) to establish the main results of the paper, which consist on approximating the $s_N$-dimensional Hausdorff measure of each Koch $N$-surface $K_N$ by means of increasingly precise upper and lower bounds. To be more precise, we will establish the existence of a decreasing sequence $\{a_n(N)\}$ of positive numbers, and an increasing sequence $\{a'_n(N)\}$ of positive numbers, such that\\
\begin{equation}\label{MR}
a'_n(N)\,\leq\,\hh^{s_N}(K_N)\,\leq\,a_n(N),\,\,\,\textrm{for each}\,\,\,n\in\mathbb{N\!}\,,\,\,\,\,\,\textrm{and}\,\,\,\,\,
\displaystyle\limsup_{n\rightarrow\infty}a'_n(N)=\hh^{s_N}(K_N)=\displaystyle\liminf_{n\rightarrow\infty}a_n(N).
\end{equation}
Some applications to boundary value problems over the family $\{\mathcal{C}_N\}$ of Koch $N$-crystals will be addressed.\\
\indent Fractals play a role in many areas in Mathematics, with multiple applications to other fields. Concerning Koch-type fractal sets, there is a vast amount of research done over the classical Koch snowflake domain (see image below).
\begin{center}
\begin{tikzpicture}
\shadedraw[shading=color wheel]
[l-system={rule set={F -> F-F++F-F}, step=0.9pt, angle=60,
   axiom=F++F++F, order=4}] lindenmayer system -- cycle;
\end{tikzpicture}
\end{center}
\begin{center}
\textsc{Figure 0:} The Koch snowflake domain
\end{center}
In particular, the fact that the interior of the Koch snowflake domain is an open connected set, and the boundary is a self-similar $d$-set (for $d=\log(4)/\log(3)$), has allowed the well posedness and regularity results for boundary value problems over such region. One can refer to the works in \cite{LAN-VELEZ-VER18-1,LAN-VELEZ-VER15-1,LAP-PANG95,VELEZ2013-1} (among many others). The interior of the Koch snowflake is an example of a finitely connected $(\varepsilon,\delta)$-domain (e.g. Definition \ref{d2.5}), which in views of \cite{JON} is equivalent to say that the interior of the domain satisfies the $p$-extension property in the sense of \cite[pag. 1]{JON} (also called a Jones domain). It is important to point out that the exact value of the of the $d$-Hausdorff measure for the classical Koch snowflake (refer to Figure 0) is unknown, up to the present time. Approximation sequences fulfilling a statement as in (\ref{MR}) were developed by Jia \cite{bao2}, and this work motivates the generalization to the 3D case, which is the heart of the present paper.\\
\indent In the case of $3$-dimensional domains, the equivalence provided by \cite{JON} for finitely connected Jordan curves in $\mathbb{R\!}^{\,2}$ is no longer valid. Furthermore, there is little literature concerning domains in $\mathbb{R\!}^{\,3}$ with fractal boundaries that may exhibit sufficient geometric properties, allowing the interior to be an $(\varepsilon,\delta)$-domain, and the boundary to be a $d$-set. However, such domains in $\mathbb{R}^n$ that can be constructed via natural polyhedral approximations are indeed of interest, and have been considered in \cite[\S 6]{Teplyaev2022}. Thus, motivated from the structure and construction of the Koch snowflake domain, we have assembled a family of $3$-dimensional connected domains whose fractal boundaries can be viewed as the limit of a sequence of pre-fractal sets (which are Lipschitz) having similar structure as the Koch curve. It follows that many of the properties of the snowflake domain are inherited by the Koch-type surfaces and crystals, which opens the door for multiple extensions and applications. In particular, one can define partial differential equations over the interior of the Koch $N$-crystals, and obtain solvability and regularity results. These latter applications will be discussed in more detail in Section \ref{sec7}.\\
\indent The paper is organized in the following way. Section \ref{sec2} provides an overview of the basic concepts, definitions and results concerning self-similar sets and the geometry of domains. In Section \ref{sec3}, we give a precise definitions and constructions for the Koch $N$-surfaces $K_N$, and the existence of a family $\{\mathcal{C}_N\}$  of Koch crystals. Geometrical motivations and justifications are also provided. At the end, we show that each Koch $N$-surface is a $s_N$-set with respect to the $s_N$-dimensional Hausdorff measure, for $s_N=\log(N^2+2)/\log(N)$.
In Section \ref{sec4}, we provide all the machinery needed to provide concrete definitions for the sequences $\{a_n\}$ and $\{a'_n\}$ mentioned in the previous paragraphs, and we state the main results of the paper, which consists in the fulfillment of (\ref{MR}). Some more general useful results are also established in this section, whose validity extend to more general classes of fractal self-similar sets. Section \ref{sec5} is purely devoted to the proof of the main result of the paper for the particular case $N=2$, while Section \ref{sec6} takes care of the proof of the main result (\ref{MR}) when $N>2$. Finally, Section \ref{sec7} presents an example of a linear partial differential equation with Robin boundary conditions over the Koch $N$-crystals, for $N>2$. We show that the structure of these crystals, which can be viewed as domains with fractal boundaries, allows the Robin problem to be well posed, solvable, and with fine regularity results.

\section{Preliminaries}\label{sec2}

\indent In this section, we collect some basic definitions and results who will play a role in the subsequent sections.
\begin{definition}\label{d2.0}
We denote the \textbf{Hausdorff distance} of $A,B \subset \R^n$ by 
$$d_H(A,B) := \max \left\{ \sup_{a \in A}\, \inf_{b \in B} \|a - b\| ,\; \sup_{b \in B} \,\inf_{a \in A} \|a - b\| \right\},$$
where $\|\cdot\|$ the denotes the euclidean norm on $\R^n$.
Furthermore, we will denote the \emph{diameter} of $C \subset \R^n$ by
$$|C| := \sup_{c_1,c_2 \in A} \|c_1 - c_2\|.$$
\end{definition} 
 
\begin{definition}\label{d2.1} A mapping $S: \R^n \rightarrow \R^n$ is called a \textbf{similitude} if there exists $0 < r < 1$, such that
$$|S(x) - S(y)| = r|x-y| \text{, for } x,y \in \R^n.$$
Similitudes are exactly those maps $S$ which can be written as
$$S(x) = rg(x) + z \text{, for } x \in \R^n,$$
for some $g \in O(n)$, $z \in \R^n$ and $0 < r < 1$. We say that $r$ is the contraction ratio of $S$.
\end{definition}

\begin{definition}\label{d2.2} Let $S = \{S_1, \ldots, S_M\}$ ($M \geq 2$) be a finite sequence of similitudes with contraction ratios $\{r_1,\ldots,r_{M}\}$ ($0<r_i<1$).
\begin{enumerate}
\item[(a)]\,\,\,We say that a non-empty compact set $K$ is \textbf{invariant} under $S$, if
$$K = \bigcap_{i = 1}^M S_iK.$$
\item[(b)]\,\,\,If in addition, $$\hh^s(S_i(K) \cap S_j(k)) = 0 \text{, for } i \neq j,\,\,\,\,\,\,\,\,\,\textrm{for}\,\,\,s=\textrm{dim}_{\hh}(K),$$ then we call the invariant set $K$ \textbf{self-similar}.
\item[(c)]\,\,\,The \textbf{similarity dimension} of $K$ is defined as the unique $s\geq0$, such that $$\displaystyle\sum^M_{i=1}r^s_i=1.$$
\end{enumerate}
\end{definition}

\indent In views of \cite{falconer1}, it is known that for any such $S$, there exists a unique invariant compact set. 

\begin{definition}\label{2.3} We say that a family of similitudes $S = \{S_1, \hdots, S_M\}$ ($M \geq 2$) satisfies the \textbf{open set condition} if there exists a non-empty open set $V$ such that
$$\bigcup_{i = 1}^M S_i(V) \subset V,\,\,\,\,\,\text{ and }\,\,\,\,\,\,S_i(V) \cap S_j(V) = \emptyset\,\,\, \text{whenever}\,\,i \neq j.$$
\end{definition}

\begin{definition}\label{2.4} Let $K \subset \R^n$ be a compact set, $s \in [0, n]$, and $\mu$ a positive measure supported $K$. We say that $K$ is a $s$\textbf{-set} with respect to the measure $\mu$, if there exist constants $a,\,b,\,R > 0$, such that      
$$ar^s \leq \mu(K \cap B(x,r)) \leq br^s,\,\,\,\, \text{for all }\,\,x \in K, \quad 0 < r \leq R.$$
In this case, we call $\mu$ an $s$\textbf{-Ahlfors measure} on $K$.
\end{definition} 

\indent The following result is important.

\begin{theorem}\label{T1}\,(see \cite{HUT81,MAT})\, If the family $S = \{S_1, \hdots, S_M\}$ with contraction ratios $r_1, \hdots, r_M$ satisfies the open set condition, then the invariant compact set $K$ under $S$ is self-similar, with $0 < \hh^s(K) < \infty$, for $s = \text{dim}_{\hh} K$. Furthermore, $s$ equals the similarity dimension of $K$, and $K$ is a $s$-set with respect to $\hh^s$. 
\end{theorem}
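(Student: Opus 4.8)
This is the classical Hutchinson--Moran theorem, and the plan is to follow the standard mass-distribution strategy while making the role of the open set condition explicit. Throughout, for a finite word $\mathbf{i}=(i_1,\dots,i_k)\in\{1,\dots,M\}^{k}$ write $S_{\mathbf{i}}=S_{i_1}\circ\cdots\circ S_{i_k}$, a similitude with contraction ratio $r_{\mathbf{i}}=r_{i_1}\cdots r_{i_k}$, and recall the invariance $K=\bigcup_{i=1}^{M}S_i(K)$, which iterates to $K=\bigcup_{|\mathbf{i}|=k}S_{\mathbf{i}}(K)$ for every $k$. Let $s$ be the similarity dimension, so that $\sum_{i=1}^{M}r_i^{s}=1$ and hence $\sum_{|\mathbf{i}|=k}r_{\mathbf{i}}^{s}=\big(\sum_i r_i^{s}\big)^{k}=1$ for all $k$.

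First I would obtain the upper bound and finiteness. The family $\{S_{\mathbf{i}}(K)\}_{|\mathbf{i}|=k}$ covers $K$, each set having diameter $r_{\mathbf{i}}|K|$, so $\sum_{|\mathbf{i}|=k}(r_{\mathbf{i}}|K|)^{s}=|K|^{s}$. Since $\max_{|\mathbf{i}|=k}r_{\mathbf{i}}\to 0$ as $k\to\infty$, these covers have vanishing mesh, whence $\hh^{s}(K)\le|K|^{s}<\infty$ and in particular $\dim_{\hh}K\le s$. The substantial direction is the lower bound. I would construct the self-similar (Hutchinson) measure $\mu$, the unique Borel probability measure satisfying $\mu=\sum_{i=1}^{M}r_i^{s}\,(S_i)_{\ast}\mu$, which is supported on $K$ and, by iterating the identity, obeys $\mu(S_{\mathbf{i}}(K))\ge r_{\mathbf{i}}^{s}$. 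The goal is the two-sided estimate $a\,r^{s}\le\mu(B(x,r))\le b\,r^{s}$ for $x\in K$ and small $r$. The lower estimate is comparatively easy: given $x\in K$ and $r$, one selects a word $\mathbf{i}$ with $S_{\mathbf{i}}(K)\ni x$, $r_{\mathbf{i}}\asymp r$, and $S_{\mathbf{i}}(K)\subset B(x,r)$, giving $\mu(B(x,r))\ge r_{\mathbf{i}}^{s}\ge a\,r^{s}$.

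The upper estimate is where the open set condition enters, and I expect it to be the main obstacle, since it is the only place where genuine geometry (rather than bookkeeping of ratios) is required. Fixing the open set $V$ of Definition \ref{2.3}, I would truncate each infinite address at the first scale where $r_{\mathbf{i}}$ drops below $r$, producing a collection of pairwise disjoint open sets $\{S_{\mathbf{i}}(V)\}$ of diameter comparable to $r$ and of comparable interior volume. A volume/packing argument then bounds the number of such cylinders that can meet $B(x,r)$ by a constant depending only on the family $S$, yielding $\mu(B(x,r))\le b\,r^{s}$. The mass distribution principle gives $\hh^{s}(K)\ge\mu(K)/b=1/b>0$, so $\dim_{\hh}K\ge s$; combined with the first paragraph, $\dim_{\hh}K=s$ and $0<\hh^{s}(K)<\infty$.

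Finally I would extract the remaining claims. Because $0<\hh^{s}(K)<\infty$, the restriction $\hh^{s}|_{K}$ and $\mu$ are mutually comparable, so the Ahlfors bounds for $\mu$ transfer to $\hh^{s}$, establishing that $K$ is an $s$-set with respect to $\hh^{s}$. For self-similarity I would use the scaling law $\hh^{s}(S_i(K))=r_i^{s}\,\hh^{s}(K)$ together with $K=\bigcup_i S_i(K)$: subadditivity gives $\hh^{s}(K)\le\sum_i\hh^{s}(S_i(K))=\big(\sum_i r_i^{s}\big)\hh^{s}(K)=\hh^{s}(K)$, and equality with $\hh^{s}(K)<\infty$ forces $\hh^{s}(S_i(K)\cap S_j(K))=0$ for $i\neq j$. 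I would take care to keep the constants in the open set condition estimate explicit, since the later approximation arguments of Section \ref{sec4} depend on effective versions of these bounds.
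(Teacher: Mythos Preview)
The paper does not prove Theorem~\ref{T1}; it is quoted as a classical result with the attribution ``(see \cite{HUT81,MAT})'' and no argument is given in the text. Your outline is the standard Hutchinson--Moran proof and is essentially correct: covers by level-$k$ cylinders give $\hh^{s}(K)\le|K|^{s}$, the self-similar measure together with the open set condition and a volume-packing count yields the upper Ahlfors bound and hence $\hh^{s}(K)>0$ via the mass distribution principle, and the equality $\sum_i r_i^{s}=1$ combined with $\hh^{s}(S_i(K))=r_i^{s}\hh^{s}(K)$ forces the overlaps to be $\hh^{s}$-null. One minor point: the passage ``$\hh^{s}|_{K}$ and $\mu$ are mutually comparable'' deserves a sentence of justification (e.g.\ uniqueness of the self-similar probability measure, or a density argument), since comparability is not automatic from $0<\hh^{s}(K)<\infty$ alone. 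Your closing remark about keeping constants explicit for Section~\ref{sec4} is not really needed: Propositions~\ref{p1} and~\ref{p2} are taken from \cite{bao1} and do not rely on effective versions of the open set condition estimate.
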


\indent We conclude this section with the following geometric definition of a domain, introduced by Jones \cite{JON}.

\begin{definition}\label{d2.5}
An open set $\Omega\subseteq\mathbb{R}^n$ is called an\,
($\varepsilon,\,\delta$)\textbf{-domain}, if there exists $\delta\in (0,+\infty]$ and there exists $\varepsilon\in (0,1]$, such that for each $x,\,y\in\Omega$ with $|x-y|\leq\delta$, there exists a continuous rectifiable curve
$\gamma :[0,t]\rightarrow\Omega$, such that $\gamma(0)=x$ and $\gamma(t)=y$, with the following properties:
\begin{itemize}
    \item[(i)] $l(\{\gamma\})\leq\frac{1}{\varepsilon}|x-y|$.
    \item[(ii)] $\textrm{dist}(z,\partial\Omega)\geq \frac{\varepsilon |x-z| \, |y - z|}{|x - y|}$ for all $z$ on $\gamma$.
\end{itemize}
Also, an $(\varepsilon,\,\infty)$-domain is called an \textbf{uniform domain}.
\end{definition}

\section{Koch Surfaces and the Koch Crystals}\label{sec3}

\indent In this section we construct the family of fractal domains central to this paper and provide several main properties. But first, we recall the construction of the classical 2-dimensional Koch curve and modify it slightly to obtain an infinite family of related Koch $N$-curves ($N > 1$ odd). 

\subsection{Motivation}

Let $L$ be the compact segment in the $x$-axis of $\mathbb{R}^2$, centered at the origin with endpoints $(-1/2,0)$ and $(1/2,0)$. For $N > 1$, consider the following partitions of $L$  
$$L_N
:= \left\{ \left[-\frac{1}{2}, -\frac{1}{2} + \frac{1}{N} \right] \times \{0\}
, \hdots , 
\left[\frac{1}{2} - \frac{1}{N}, \frac{1}{2}\right] \times \{0\} \right\}
=
\left\{\left[\frac{-1}{2} + \frac{i-1}{N}, \frac{-1}{2} + \frac{i}{N}\right] \times \{0\}\right\}_{i=1}^N,$$
consisting of $N$ compact intervals of length $1/N$ (see Figure \ref{fig:triangulations1}).

\begin{figure}[ht]
    \centering
    \includegraphics[scale=.9]{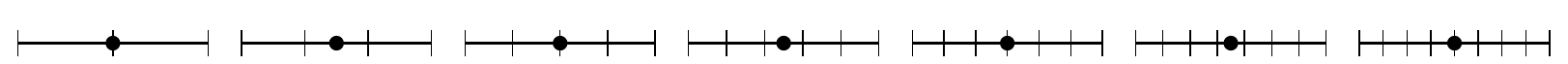}
    \caption{The point $(0,0)$ together with $L_2, \hdots, L_8$ respectively}
    \label{fig:triangulations1}
\end{figure}

Note that there does not exist a middle interval in $L_{2N}$ (for $N \geq 1$), that is, a unique interval containing the origin. With this in mind, we may use these $L_N$ to define the following family of fractals.

\begin{definition}
Let $N > 1$ such that $N \not\equiv 0 \,(\bmod\,\, 2)$. We define the \textbf{Koch $\mathbf{N}$-curve} to be the compact self-similar invariant set under $N+1$ mappings of ratio $1/N$. Out of these mappings, $N-1$ of them send $L$ to the interval $[\frac{-1}{2} + \frac{i-1}{N}, \frac{-1}{2} + \frac{i}{N}] \times \{0\}$ in $L_N$ for $i=1,\hdots,\frac{N-1}{2},\frac{N+1}{2},\hdots, N$. Notice we do not include a mapping which sends $L$ to the middle interval $[\frac{-1}{2N},\frac{1}{2N}] \times \{0\}$ in $L_N$. We do however include two additional mappings which send $L$ to the two compact intervals with endpoints $(\frac{-1}{2N},0)$, $(0,\frac{\sqrt{3}}{2N})$, and $(\frac{1}{2N},0)$, $(0,\frac{\sqrt{3}}{2N})$ respectively. Notice these two intervals together with the middle interval $[\frac{-1}{2N},\frac{1}{2N}] \times \{0\}$ form the edges of an equilateral triangle of side-length $\frac{1}{N}$ with vertices $(\frac{-1}{2N},0)$, $(0,\frac{\sqrt{3}}{2N})$, $(\frac{1}{2N},0)$.
\end{definition}

\begin{example}
The Koch 3-curve is the well-studied classical Koch curve, consisting of four self-similar copies of scale $1/3$. In Figure \ref{fig:koch3curveprefractals} (left), we present the images of $L$ under the four mappings which generate the Koch 3-curve in red, blue, purple, and yellow. In Figure \ref{fig:koch3curveprefractals} (right), we present the images of the left figure under the same four mappings in red, blue, purple, and yellow. Iterating this process we obtain a figure with four self-similar copies. 

\begin{figure}[ht]
    \centering
    \includegraphics{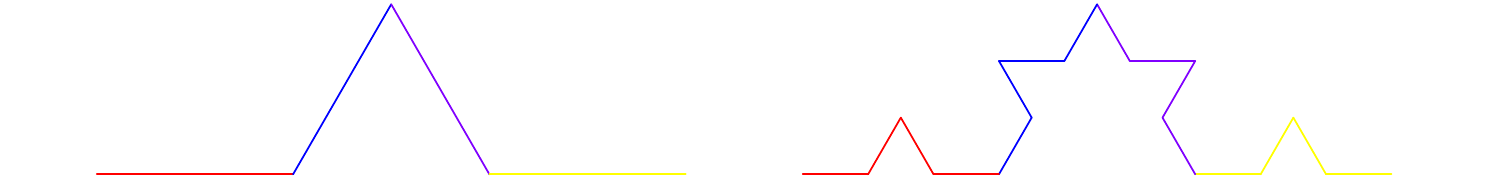}\\[10pt]
    \includegraphics{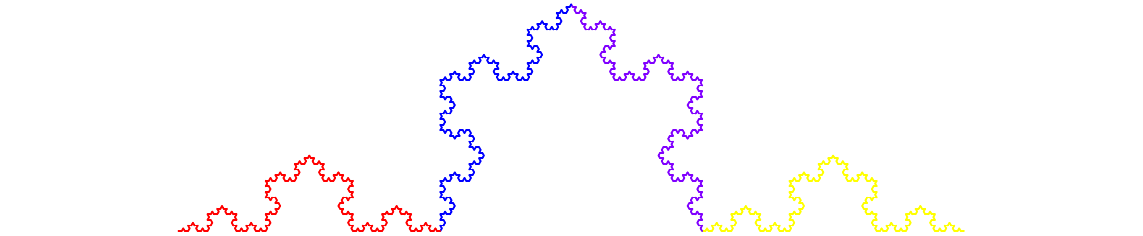}
    \caption{Koch 3-curve prefractals: first, second, and fourth iterations}
    \label{fig:koch3curveprefractals}
\end{figure}

    
\end{example}

We contrast the classical Koch 3-curve with the following Koch 5-curve and 7-curve by presenting their first prefractals in Figure \ref{fig:koch5,7curves}.

\begin{figure}[ht]
    \centering
    \includegraphics[scale=1.5]{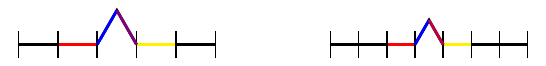}
    \caption{Koch 5-curve and 7-curve prefractals, first iteration (left and right resp.)}
    \label{fig:koch5,7curves}
\end{figure}

With this family of fractal curves, we may construct an associated family of fractal domains.

\begin{definition}
Let $N > 1$ such that $N \not\equiv 0 \,(\bmod\,\, 2)$. We define the \textbf{Koch N-snowflake} as the closed set enclosed by three congruent Koch $N$-curves, each pair of which intersect at precisely one point (see Figure \ref{fig:kochsnowflakes}).
\end{definition}

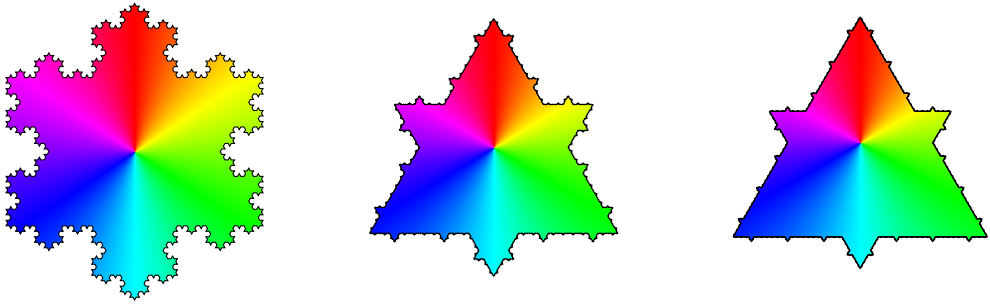
\begin{figure}[ht]
\centering
\begin{tikzpicture}
\shadedraw[shading=color wheel]
[l-system={rule set={F -> F-F++F-F}, step=1.2pt, angle=60,
   axiom=F++F++F, order=4}] lindenmayer system -- cycle;
\end{tikzpicture}
\hspace{30pt}
\raisebox{9pt}{
\begin{tikzpicture}
\shadedraw[shading=color wheel]
[l-system={rule set={F -> FF-F++F-FF}, step=0.15pt, angle=60,
   axiom=F++F++F, order=4}] lindenmayer system -- cycle;
\end{tikzpicture}
}
\hspace{30pt}
\raisebox{12pt}{
\begin{tikzpicture}
\shadedraw[shading=color wheel]
[l-system={rule set={F -> FFF-F++F-FFF}, step=0.04pt, angle=60,
   axiom=F++F++F, order=4}] lindenmayer system -- cycle;
\end{tikzpicture}
}
\caption{Koch 3-snowflake, 5-snowflake, and 7-snowflake prefractals (4th iteration)}
\label{fig:kochsnowflakes}
\end{figure}

In what follows, we will provide and study higher dimensional analogues of these constructions by replacing compact intervals (1-simplices) with triangles (2-simplices), and triangles with tetrahedrons (3-simplices).

\subsection{Construction}\label{sub3.1}

\indent Let $T$ be the compact region in the $xy$ plane of $\R^3$ enclosed by the equilateral triangle of side length 1 which is centered at the origin with vertices 
$$p_1 = \frac{\sqrt{3}}{6}\big(\cos(0),\sin(0), 0\big) \qquad p_2 = \frac{\sqrt{3}}{6}\big(\cos\left(2\pi/3\right),\sin\left(2\pi/3\right), 0\big) \qquad p_3 = \frac{\sqrt{3}}{6}\big(\cos\left(4\pi/3\right),\sin\left(4\pi/3\right),0\big).$$
Then for $N > 1$, we consider the following triangulations $T_N$ of $T$ consisting of $N^2$ equilateral triangles of scale $1/N$ (see Figure \ref{fig:triangulations2}).

\begin{figure}[ht]
    \centering
    \includegraphics[scale=0.40]{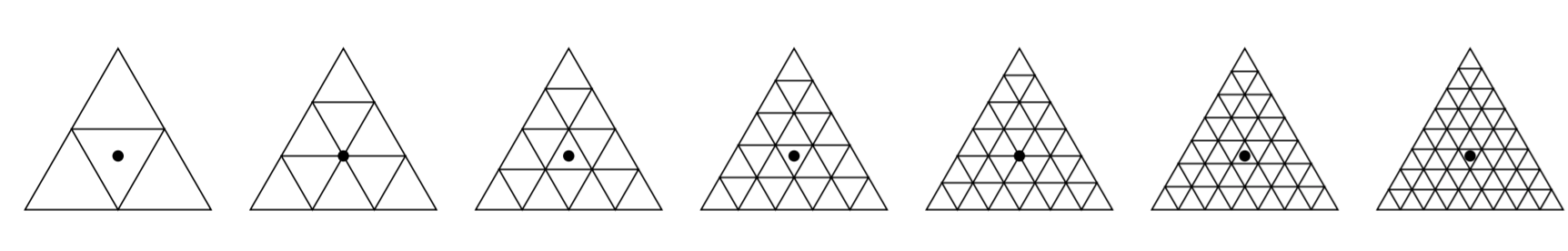}
    \caption{The point $(0,0,0)$ together with $T_2, \hdots, T_8$ respectively}
    \label{fig:triangulations2}
\end{figure}
\noindent Note that there does not exists a middle triangle in $T_{3N}$ (for $N \geq 1$), that is, a unique triangle containing the origin. With this in mind, we may use these $T_N$ to define the following family of fractals analogous to the construction of the Koch curve. 
\begin{definition}\label{kochNsurface}
Let $N > 1$ such that $N \not\equiv 0 \,(\bmod\,\, 3)$. We define the \textbf{Koch $\mathbf{N}$-surface} $\mathbf{K_N}$ to be the compact self-similar invariant set under the mappings $\F_N = \{F_{i,N}\}_{i=1}^{N^2+2}$ of ratio $1/N$ that send $T$ to each equilateral triangle except for the middle one in $T_N$, together with three additional mappings which send $T$ to the three equilateral triangles that form a regular tetrahedron with the removed middle triangle. By regular tetrahedron, we mean the boundary of a 3-dimensional simplex which is also a regular polytope. 

\end{definition}

Throughout this work, we reserve $N$ to play the role of determining both the scaling ratio and the number of mappings which generate the fractal $K_N$.

\begin{example}\label{koch2surface} The \textbf{Koch $\mathbf{2}$-surface} $\mathbf{K_2}$ is the compact self-similar invariant set under the family of mappings $\F_{2} = \{F_{i,2}\}_{i=1}^6$ given by
\begin{align*}
F_{1,2}(x,y,z) &= \left( \frac{x + \frac{\sqrt{3}}{3}}{2},\,\,\frac{y}{2},\,\, \frac{z}{2}\right)\\
F_{2,2}(x,y,z) &= \left(  \frac{x + \frac{\sqrt{3}}{3}\cos\left(\frac{2\pi}{3}\right)}{2},\,\, \frac{y + \frac{\sqrt{3}}{3}\sin\left(\frac{2\pi}{3}\right)}{2},\,\, \frac{z}{2}\right)\\
F_{3,2}(x,y,z) &= \left( \frac{x + \frac{\sqrt{3}}{3}\cos\left(\frac{4\pi}{3}\right)}{2},\,\, \frac{y + \frac{\sqrt{3}}{3}\sin\left(\frac{4\pi}{3}\right)}{2},\,\, \frac{z}{2}\right)\\
F_{4,2}(x,y,z) &= \left( -\frac{x}{6} + \frac{\sqrt{2}}{3}z + \frac{\sqrt{3}}{18},\,\, - \frac{y}{2},\,\, \frac{\sqrt{2}}{3}x + \frac{z}{6} + \frac{\sqrt{6}}{18}\right)\\
F_{5,2}(x,y,z) &= \left( \frac{x}{12} + \frac{\sqrt{3}}{4}y - \frac{\sqrt{2}}{6}z - \frac{\sqrt{3}}{36},\,\, -\frac{\sqrt{3}}{12}x + \frac{y}{4} + \frac{\sqrt{6}}{6}z + \frac{1}{12},\,\, \frac{\sqrt{2}}{3}x + \frac{z}{6} + \frac{\sqrt{6}}{18}\right)\\
F_{6,2}(x,y,z) &= \left(  \frac{x}{12} - \frac{\sqrt{3}}{4}y - \frac{\sqrt{2}}{6}z - \frac{\sqrt{3}}{36},\,\, \frac{\sqrt{3}}{12}x + \frac{y}{4} - \frac{\sqrt{6}}{6}z - \frac{1}{12},\,\, \frac{\sqrt{2}}{3}x + \frac{z}{6} + \frac{\sqrt{6}}{18}\right)\\
\end{align*}

In Figure \ref{fig:k2iterations} (left), we present the images of $T$ under the six mappings which generate the Koch 3-curve in red, blue, purple, yellow, green, and orange (the last of which is not visible). In Figure \ref{fig:k2iterations} (right), we present the images of the left figure under the same six mappings in red, blue, purple, yellow, green, and orange. Iterating this process we obtain Figure \ref{fig:k2}, resulting in six self-similar copies in red, blue, purple, yellow, green, and orange. 

\begin{figure}[ht]
    \includegraphics[scale=1]{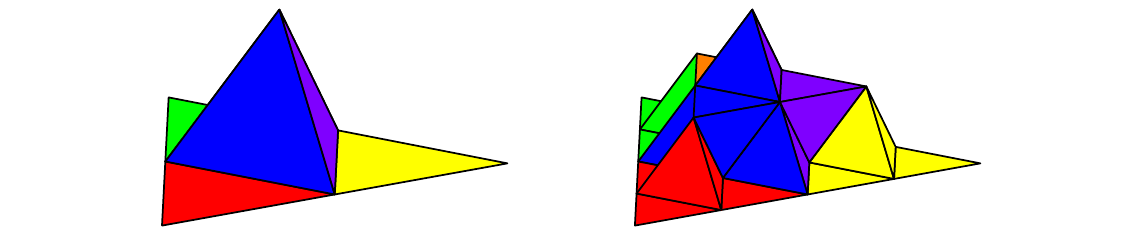}
    \caption{Koch 2-surface prefractals, first and second iterations}
    \label{fig:k2iterations}
\end{figure}

\begin{figure}[ht]
    \includegraphics[scale=1]{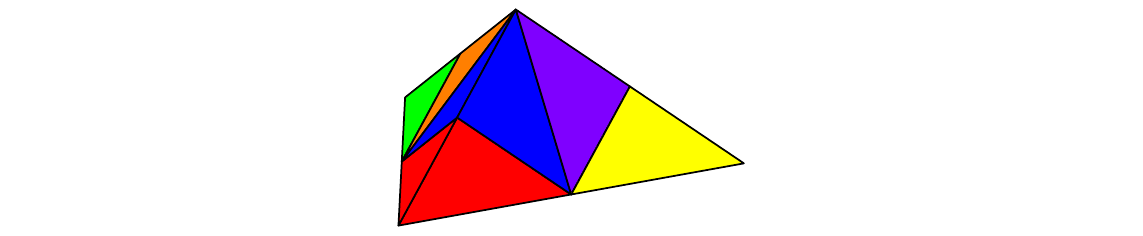}
    \caption{The Koch 2-surface $K_2$. From above, $K_2$ is indistinguishable from a tetrahedron, and its fractal features can only be viewed from below.}
    \label{fig:k2}
\end{figure}

 We will adopt the custom of writing $F_j(\cdot,\cdot,\cdot):= F_{j,2}(\cdot,\cdot,\cdot)$ ($j\in\{1,\ldots,6\}$) since it is a particularly difficult case, requiring closer examination.
\end{example}
Note that for $1 \leq j \leq 3$, $F_{j}$ contracts $T$ by a factor of $\frac{1}{2}$ and leaves $p_j$ fixed. Thus the maps $\{F_j\}_{i=1}^3$ generate Sierpi\'nski gaskets as seen in Figure \ref{fig:sierpinski}.
\begin{figure}[ht]
\includegraphics[scale=0.4]{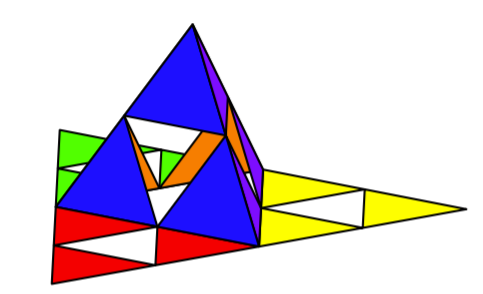}
\caption{Applying $F_{1}, F_{2}, F_{3}$ and then $\F_{2}$ to the triangle $T$.}
\label{fig:sierpinski}
\end{figure}

We contrast the Koch 2-surface with the following Koch 4-surface and 5-surface by presenting their first prefractals in Figure \ref{fig:kochsurf4n5}.

\begin{figure}[ht]
    \centering
    \includegraphics[scale=.35]{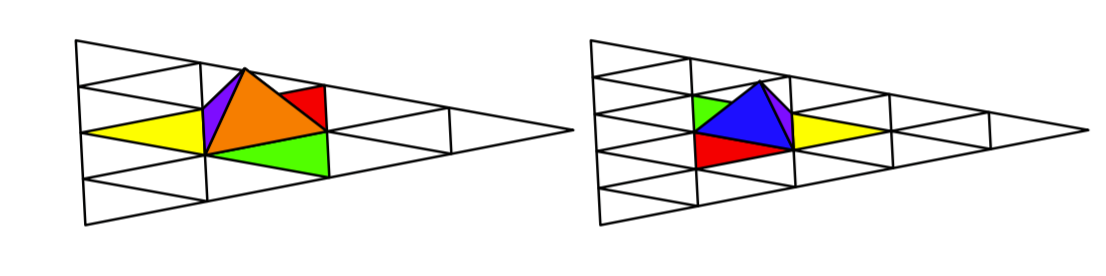}
    \caption{Koch 4-surface and 5-surface prefractals, first iteration (left and right resp.) We compare $K_4$ and $K_5$ with $K_2$ by coloring the six 1-cells around the ``peak'' of their first iterations in a similar fashion the coloring used in the first prefractal for $K_2$.}
    \label{fig:kochsurf4n5}
\end{figure}

We are now ready to define the fractals of main interest for this paper.

\begin{definition}\label{3.2} Let $N > 1$ such that $N \not\equiv 0 \,(\bmod\,\, 3)$. We define the \textbf{Koch $\mathbf{N}$-crystal} $\mathbf{\mathcal{C}_N}$ as the closed set enclosed by four congruent Koch $N$-surfaces, each pair of which intersect at precisely one edge. We then denote the boundary of $\mathcal{C}_{N}$ by $\partial\mathcal{C}_{N}$.
\end{definition}

\begin{example}
Note that, when we glue four Koch 2-surfaces as in Definition \ref{3.2}, we obtain a fractal whose outermost layer is the surface of a cube. In the spirit of Mandelbrot, we would like to remark how this figure resembles a geological geode with its smooth exterior containing a ``crystalline'' fractal interior. Thus, when considering the closed set enclosed by these Koch 2-surfaces, we see that the Koch 2-crystal $\mathcal{C}_2$ is a cube with side length $\sqrt{2}/2$. Because of this, $\mathcal{C}_2$ will play no role in Section \ref{sec7}, as it is not an interesting fractal domain. We will however study the Koch surface $K_2$ for its own sake in Sections \ref{sec3}, \ref{sec4}, and especially in Section \ref{sec5}. 
\begin{figure}[ht]
    \centering
    \includegraphics[scale=.4]{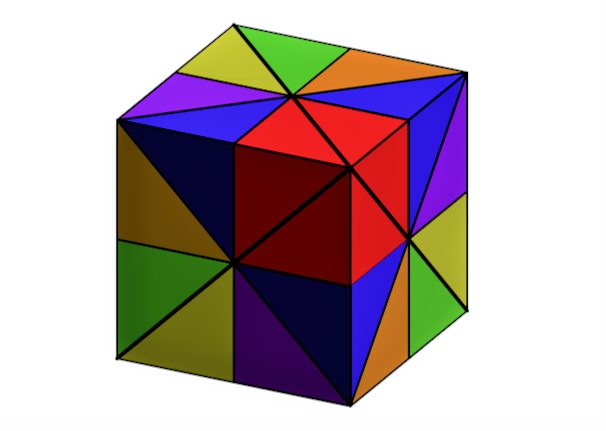}
    \caption{The Koch 2-crystal $\mathcal{C}_2$, which is indeed only a cube of side length $\frac{\sqrt{2}}{2}$.}
    \label{fig:C2}
\end{figure}
\end{example}

\begin{figure}[ht]
    \centering
    \includegraphics[scale=0.3]{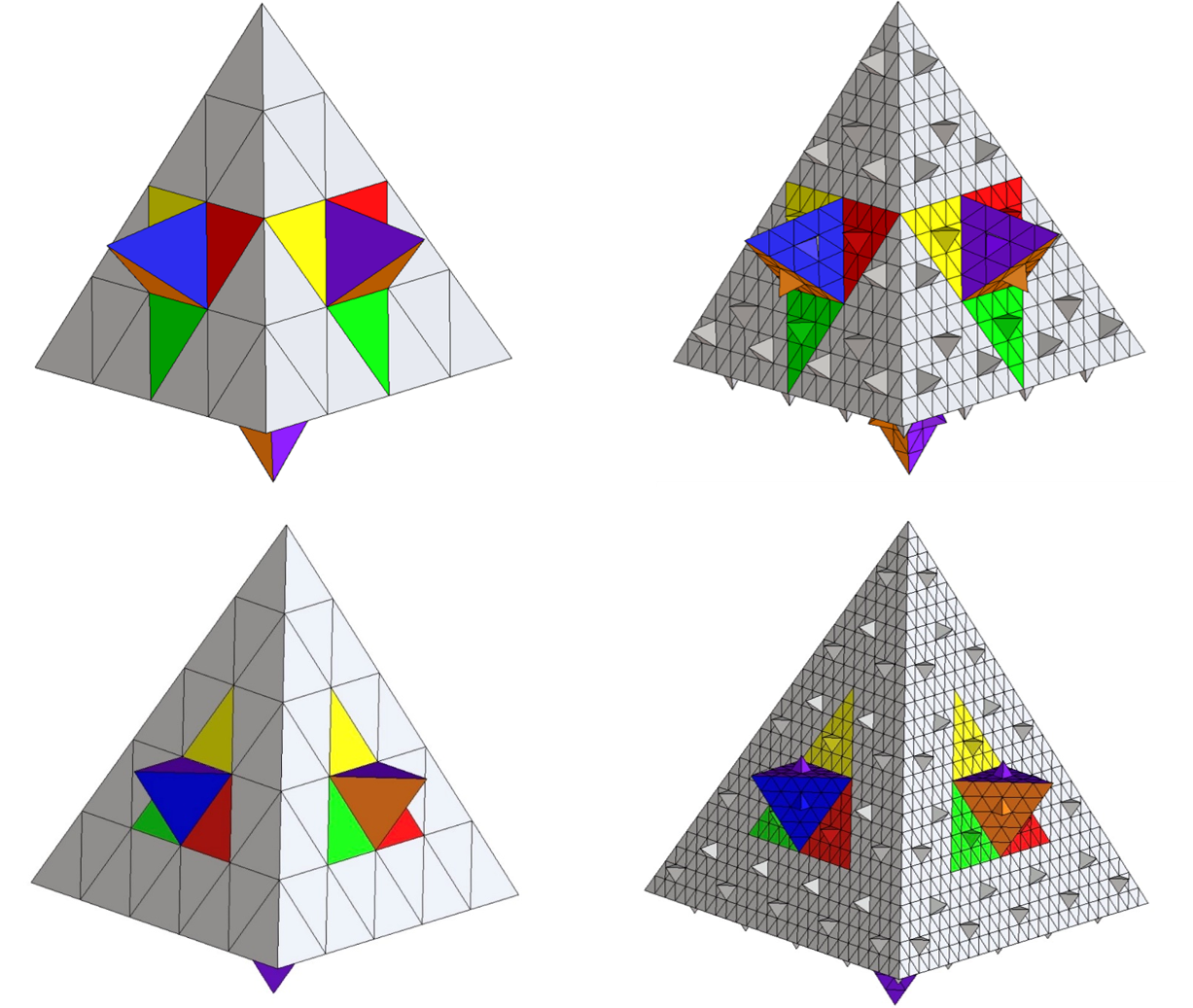}
    \caption{Koch 4-crystal, first and second iterations (top left and right resp.)\\
    \phantom{Figure \ref{fig:crystals}.\;\;\;\,} Koch 5-crystal, first and second iterations (bottom left and right resp.)}
    \label{fig:crystals}
\end{figure}
\subsection{Properties}\label{sub3.2}

\indent For $N \not\equiv 0$ (\text{mod} 3), let $K_N$ be the Koch $N$-surface generated by the iterated function system $\F_N$. One can see that each $\F_n$ satisfies the open set condition by considering the bounded open set enclosed by the tetrahedron with vertices $p_1, p_2, p_3,$ and the highest point $p_4 \in K_N$, i.e. $\pi_z(p_4) = \max\{z\mid (x,y,z) \in K_N\}$. Thus, by Theorem \ref{T1}, it follows that $s_N = \dim_{\hh}(K_N) = \log (N^2 + 2) / \log N$, which is the solution of the equation
$$ \sum_{k=1}^{N^2+2} \left(\frac{1}{N}\right)^{s_N} = (N^2 + 2)\left(\frac{1}{N}\right)^{s_N} = 1.$$
If $N>2$, then $\partial\mathcal{C}_N$ is the union of four copies of $K_N$, and thus $\dim_{\hh} (\partial\mathcal{C}_N) = \dim_{\hh}(K_N) = \log (N^2+2) / \log N$ due to the stability of the Hausdorff dimension. Furthermore, $\hh^{s_N}$ is an ${s_N}$-Ahlfors measure on $\partial\mathcal{C}_N$ for each $N\in \mathbb{N}\setminus\{1\}$ with $N \not\equiv 0$ (\text{mod} 3), where $s_N = \dim_{\hh}(\partial\mathcal{C}_N)$. Moreover, it is clearly seen that the interior of the set $\mathcal{C}_N\subseteq\mathbb{R\!}^{\,3}$ is an uniform domain.

In the case when $N=2$, we see from Figure \ref{fig:C2} that, while $K_2$ is a fractal of Hausdorff dimension $s_2 = \log 6 / \log 2$, the figure $\mathcal{C}_2$ is the cube of side-length 1 and $\partial \mathcal{C}_2$ is just its boundary of dimension $2$.

\section{Bounds for Hausdorff Measure}\label{sec4}

\indent In this section, we present the machinery needed in order to develop a process to compute sharp bounds for the Hausdorff measure of the Koch $N$-surfaces $K_N$ and $N$-crystals $\mathcal{C}_N$. The process will lead to an approximation tool to compute the Hausdorff measure of these fractal sets. Some key general results will be stated and proved. In the end, we will state the main results of the paper.\\
\indent We start with the following definition.

\begin{definition}\label{d4.1}
Let $K$ be the unique non-empty compact self-similar invariant set under an iterated function system (IFS) $\F = \{F_j\}_{j=1}^M$ satisfying the open set condition (OSC) where $F_j$ has ratio $0 < r_j < 1$. Let $\textbf{M} := \{1,2,\hdots, M\}$ and $n \geq 1$. We define the word space associated to $K$ as $\Omega := \textbf{M}^{\,\N}$ and $\Omega_n := \textbf{M}^n$ with the $n$-truncation map $[ \,\cdot\, ]_n: \Omega \rightarrow \Omega_n$ defined for a word $\omega = \omega_1 \omega_2 \cdots \in \Omega$ by $[\omega]_n := \omega_1 \cdots \omega_n$.
\end{definition}

We will not concern ourselves with the trivial case when $M = 1$. Notice that there is a relation between the word space $\Omega$ and the attractor $K$ of an IFS with $M$ maps, where we identify points in $K$ with infinite words, and regions with finite words. Namely for $\omega \in \Omega_n$, we define $K^{(\omega)} := F_{\omega}(K)$ where $F_{\omega_1 \omega_2 \cdots \omega_n}$ is given inductively as $F_{\omega_2 \cdots \omega_n} \circ F_{\omega_1}$. Moreover for $\omega \in \Omega$, we define the point $K^{(\omega)}$ as the unique point in $\bigcap_{n \in \N} K^{[\omega]_n}$. 
We will denote the natural probability measure on $K$ as $\mu$ where for $\omega = \omega_1 \cdots \omega_n \in \Omega_n$, we have that $\mu(K^{(\omega)}) = r^s_{\omega_1}\cdots r^s_{\omega_n}$. Since $\F$ satisfies the OSC, we also have that $\mu(K^{(\omega)}) = \sum_{j=0}^M \mu(K^{(\omega j)}) = \sum_{j=0}^M r_j^s \, \mu(K^{(\omega)})$.

\begin{definition}\label{d4.2}
Let $K^{n} := \{ K^{(\omega)} \mid \omega \in \Omega_n \}$ be the set of $n$-cells of $K$, where we reserve the notation $\Delta^{(n)}_i$ for elements of $K^{n}$, which we call $n$-cells. We also define $K^0 := K$.
\end{definition} 

We now present Proposition 1.1 in \cite{bao1}.

\begin{proposition}\label{p1}
For $n \geq 1$, $1 \leq k \leq M^n$, and $s = \dim_{\hh} K$, let 
$$b_k := \min_{\left\{\Delta^{(n)}_i\right\}_{i = 1}^k \subseteq K^n} \left\{ \frac{\left|\bigcup_{i = 1}^k \Delta^{(n)}_i\right|^s}{\mu\left(\bigcup_{i=1}^k \Delta_i^{(n)}\right)} \right\}$$
where the minimum is taken for all possible sets of $k$ elements of $K^n$, and let $a_n = \min_{1 \leq k \leq M^n} \{b_k\}$. If there exists a constant $\mathbf{a} > 0$ such that $a_n \geq\mathbf{a}$ for all $n$, then $\hh^s(K) \geq\mathbf{a}$. 
\end{proposition}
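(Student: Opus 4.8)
The plan is to prove the lower bound $\hh^s(K) \geq \mathbf{a}$ by showing that $\mathbf{a}$ is a lower bound for the Hausdorff measure obtained from \emph{arbitrary} covers of $K$, using the mass distribution principle adapted to the self-similar structure. First I would fix an arbitrary countable cover $\{U_\ell\}_\ell$ of $K$ by sets of small diameter and, for each covering set $U_\ell$, estimate the $\mu$-mass it can carry in terms of its diameter. The natural way to do this is to relate each $U_\ell$ to a union of $n$-cells: choosing $n = n(\ell)$ so that the $n$-cells meeting $U_\ell$ have diameter comparable to $|U_\ell|$, one controls $\mu(U_\ell)$ by $\mu$ of the union of those $n$-cells that intersect $U_\ell$. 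Because only boundedly many $n$-cells of a fixed generation can meet a set of comparable diameter (a consequence of the open set condition, which guarantees the cells have bounded overlap and comparable size), this reduces the estimate to a finite union of $n$-cells.

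The heart of the argument is then the definition of $b_k$ and $a_n$. For any finite collection $\{\Delta_i^{(n)}\}_{i=1}^k$ of $n$-cells, the definition of $b_k$ gives
\begin{equation*}
\mu\!\left(\bigcup_{i=1}^k \Delta_i^{(n)}\right) \;\leq\; \frac{1}{b_k}\,\left|\bigcup_{i=1}^k \Delta_i^{(n)}\right|^{s} \;\leq\; \frac{1}{a_n}\,\left|\bigcup_{i=1}^k \Delta_i^{(n)}\right|^{s},
\end{equation*}
since $a_n = \min_{1 \leq k \leq M^n} b_k \leq b_k$. Applying this to the cells meeting a single covering set $U_\ell$, and using that the diameter of their union is controlled by a constant multiple of $|U_\ell|$, I would obtain $\mu(U_\ell) \leq C\, a_n^{-1}\,|U_\ell|^s$ for an absolute constant $C$ coming from the bounded-overlap count. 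Summing over $\ell$ and using $\sum_\ell \mu(U_\ell) \geq \mu(K) = 1$ (as $\mu$ is a probability measure and the $U_\ell$ cover $K$) yields $\sum_\ell |U_\ell|^s \geq C^{-1} a_n \geq C^{-1}\mathbf{a}$. Taking the infimum over covers and letting the mesh tend to zero gives $\hh^s(K) \gtrsim \mathbf{a}$; the normalization constants should be tracked to confirm that in fact $C = 1$ can be achieved by passing to the generation-matched cells, recovering the clean bound $\hh^s(K) \geq \mathbf{a}$ exactly.

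The main obstacle I anticipate is the passage from an \emph{arbitrary} covering set $U_\ell$ to a union of $n$-cells without losing constants, i.e. justifying that the relevant diameter-to-measure ratio for $U_\ell$ is bounded below by the ratio $b_k$ taken over genuine $n$-cell unions rather than arbitrary sets. One delicate point is that $U_\ell$ need not itself be a union of cells, so I must either inflate $U_\ell$ slightly and invoke comparability of diameters, or refine $n$ with $\ell$ so that the approximation error vanishes in the limit. A second technical care is the choice of the generation $n$: it should depend on $|U_\ell|$ so that the cells are at the right scale, which means the bound must hold uniformly in $n$ — exactly the hypothesis $a_n \geq \mathbf{a}$ supplies this uniformity. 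I would handle the diameter comparison by using the fact, guaranteed by Theorem~\ref{T1} and the open set condition, that $K$ is an $s$-set, so that $\mu(K \cap B(x,r)) \leq b\, r^s$; this Ahlfors upper regularity gives the cleanest route to bounding $\mu(U_\ell)$ directly by $|U_\ell|^s$ and sidesteps much of the cell-counting bookkeeping, leaving the role of $a_n$ to sharpen the constant to the optimal value $\mathbf{a}$.
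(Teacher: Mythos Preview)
The paper does not actually prove this proposition; it is quoted from Jia \cite{bao1} (``We now present Proposition 1.1 in \cite{bao1}''), so there is no in-paper argument to compare against. Judged on its own, your framework is the right one --- this is a mass-distribution argument --- but the execution is tangled between two strategies, and the one you lead with would not yield the sharp constant.

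Choosing $n=n(\ell)$ so that the $n$-cells have diameter \emph{comparable} to $|U_\ell|$, together with bounded-overlap counting, inevitably introduces a constant $C>1$: the union of cells meeting $U_\ell$ has diameter at most $|U_\ell|+2r_{\max}^{\,n}|K|\sim(1+2c)|U_\ell|$ when scales are matched, and you end up with $\hh^s(K)\ge(1+2c)^{-s}\mathbf a$, not $\mathbf a$. Your fallback to Ahlfors upper regularity has the same defect: it gives a lower bound involving the Ahlfors constant, which has nothing to do with $\mathbf a$. Neither route ``sharpens'' to $C=1$.

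The idea you mention only in passing --- ``refine $n$ \dots\ so that the approximation error vanishes in the limit'' --- is in fact the whole point, and it does not require matching $n$ to $|U_\ell|$. The clean argument: by compactness replace an arbitrary cover by a \emph{finite} subcover $\{U_1,\dots,U_L\}$; for each fixed $n$, let $\mathcal D_\ell^{(n)}$ be the collection of $n$-cells meeting $U_\ell$ (automatically $|\mathcal D_\ell^{(n)}|\le M^n$), so that
\[
1=\mu(K)\le\sum_{\ell=1}^{L}\mu\Bigl(\bigcup_{\Delta\in\mathcal D_\ell^{(n)}}\!\Delta\Bigr)
\le\frac{1}{a_n}\sum_{\ell=1}^{L}\Bigl|\bigcup_{\Delta\in\mathcal D_\ell^{(n)}}\!\Delta\Bigr|^{s}
\le\frac{1}{\mathbf a}\sum_{\ell=1}^{L}\bigl(|U_\ell|+2r_{\max}^{\,n}|K|\bigr)^{s}.
\]
Now send $n\to\infty$ with the finite cover held fixed; the error term vanishes and one obtains $\mathbf a\le\sum_\ell|U_\ell|^{s}$. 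Infimising over covers gives $\hh^s(K)\ge\mathbf a$ exactly, with no cell-counting constant and no Ahlfors constant. The uniform hypothesis $a_n\ge\mathbf a$ for all $n$ is exactly what licenses this limit.
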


\indent The sequence defined in Proposition \ref{p1} has a special consequence, as the following proposition taken from \cite{bao1} describes.

\begin{proposition}\label{p2}
For $n \geq 1$, the sequence $\{a_n\}$ defined in Proposition \ref{p1} is decreasing, with $\displaystyle\lim_{n \rightarrow \infty}a_n = \hh^s(K)$.
\end{proposition}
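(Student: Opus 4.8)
The plan is to read $a_n$ as a single minimization. For a nonempty union $U=\bigcup_i\Delta^{(n)}_i$ of $n$-cells set $\rho(U):=|U|^s/\mu(U)$; then ranging over $1\le k\le M^n$ and over all $k$-element subfamilies of $K^n$ amounts to ranging over all nonempty unions of $n$-cells, so that $a_n=\min\{\rho(U):U\text{ is a nonempty union of }n\text{-cells}\}$. The feature I would exploit throughout is that $\rho(U)$ depends only on the set $U\subseteq K$, through its diameter $|U|$ and its mass $\mu(U)$, and not on the level at which $U$ is broken into cells.

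First I would show $\{a_n\}$ is decreasing. Since $K^{(\omega)}=\bigcup_{j=1}^M K^{(\omega j)}$, each $n$-cell is a union of $M$ many $(n+1)$-cells, so any union of $k\le M^n$ $n$-cells is also a union of $kM\le M^{n+1}$ $(n+1)$-cells representing the very same set. Hence every competitor for $a_n$ is also a competitor for $a_{n+1}$, and minimizing over the larger family can only lower the value, i.e. $a_{n+1}\le a_n$. As $\{a_n\}$ is decreasing and bounded below by $0$, the limit $L:=\lim_n a_n=\inf_n a_n$ exists.

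The substantive step is the bound $\hh^s(K)\le a_n$ for every fixed $n$ (giving $\hh^s(K)\le L$); the matching inequality will then follow cheaply from Proposition \ref{p1}. I would prove it by manufacturing an efficient covering of $K$ out of an optimal cluster. Let $U^{*}=\bigcup_{\omega\in A}K^{(\omega)}$, $A\subseteq\Omega_n$, attain the minimum, so $a_n=|U^{*}|^s/p$ with $p:=\mu(U^{*})=\sum_{\omega\in A}r_\omega^s$; if $p=1$ then $U^{*}=K$ and $a_n=|K|^s$ already bounds $\hh^s(K)$ through the cell cover, so assume $p<1$. Define a covering scheme $\Phi$ of $K$ recursively: cover the cells indexed by $A$ by the single set $U^{*}$ at cost $|U^{*}|^s=a_n p$, and cover each remaining cell $K^{(\omega)}$ ($\omega\in\Omega_n\setminus A$) by the copy of $\Phi$ scaled by $r_\omega$. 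If $C$ is the total cost, self-similarity forces $C=a_np+(1-p)C$, whence $C=a_n$. Two technical points remain. To render the cover admissible for $\hh^s_\delta$ as $\delta\to0$, I would first descend $m$ blocks, writing $K=\bigcup_{\tau\in\Omega_{nm}}K^{(\tau)}$ and applying the scaled scheme inside each $K^{(\tau)}$: the total cost is still $\big(\sum_\tau r_\tau^s\big)a_n=a_n$, while every piece has diameter at most $(\max_j r_j)^{nm}|U^{*}|\to0$. Moreover $\Phi$ leaves uncovered only the attractor $E$ of the subsystem $\{F_\omega:\omega\in\Omega_n\setminus A\}$; because $\sum_{\omega\notin A}r_\omega^s=1-p<1$, this $E$ has Hausdorff dimension strictly below $s$, so $\hh^s(E)=0$ and it may be adjoined to the cover at arbitrarily small cost. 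Letting $\delta\to0$ yields $\hh^s(K)\le a_n$. I expect the diameter control and the treatment of the measure-zero leftover set $E$ to be the genuinely delicate steps.

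Finally I would close the argument. As $\hh^s(K)>0$ by Theorem \ref{T1} and $\hh^s(K)\le L$, we have $L\ge\hh^s(K)>0$; thus $a_n\ge L>0$ for all $n$, and Proposition \ref{p1} applied with $\mathbf a=L$ gives the reverse inequality $\hh^s(K)\ge L$. Combining the two bounds yields $\lim_n a_n=L=\hh^s(K)$, as claimed.
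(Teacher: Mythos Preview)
The paper does not supply its own proof of this proposition; it simply records that the result is taken from Jia's paper \cite{bao1}. Your argument, by contrast, is a self-contained proof, and it is correct.

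The monotonicity step is immediate once one observes, as you do, that $\rho(U)=|U|^s/\mu(U)$ depends only on the set $U$ and not on the level of its decomposition into cells. Your main idea for the upper bound $\hh^s(K)\le a_n$ is the heart of the matter: starting from an optimal cluster $U^*$ with $p=\mu(U^*)<1$, you iterate it under the residual subsystem $\{F_\omega:\omega\in\Omega_n\setminus A\}$ to manufacture a countable cover of $K\setminus E$ whose total $s$-cost telescopes to $|U^*|^s/p=a_n$. The two delicate points you flag are both handled cleanly: pre-descending $m$ blocks shrinks all diameters below any prescribed $\delta$ without changing the cost, and the leftover set $E$ is the attractor of a system with $\sum_{\omega\notin A}r_\omega^s=1-p<1$, hence has Hausdorff dimension strictly below $s$ and contributes nothing. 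Closing the loop by invoking Proposition~\ref{p1} with $\mathbf a=L$ (positive because $L\ge\hh^s(K)>0$ from the upper bound and Theorem~\ref{T1}) is exactly right.

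One small remark on the boundary case $p=1$: your phrase ``through the cell cover'' deserves one more sentence. Covering $K$ by the $M^{nm}$ level-$nm$ cells gives total $s$-cost $\sum_{\tau}r_\tau^s|K|^s=|K|^s=a_n$ with diameters $\le(\max_j r_j)^{nm}|K|\to 0$, so $\hh^s(K)\le a_n$ there too. With that made explicit, your proof is complete and would stand in for the cited reference.
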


\indent One of the goals of this paper consists in finding a sequence of constants $\mathbf{a}$ (as in Proposition \ref{p1}) which increase towards $
\hh^s(K)$. This will be achieved using case-by-case analysis. To proceed, we add some additional definitions and notations.

\begin{definition} \label{case}
We say a proposition $P$ on the subsets of $K$ is a (valid) \textbf{case}, if 
\begin{itemize}
    \item For every $n$, there is a family $\Dset = \{\Delta_1^{(n)}, \hdots, \Delta_k^{(n)}\} \subseteq K^n$ such that $\Dcup$ satisfies $P$.
   \item For $n > 1$ and $\Dset \subseteq K^n$ such that $\Dcup$ satisfies case $P$, there is a family $\{\Delta^{(n-1)}_j\}_j \subseteq K^{n-1}$ such that $\Dcup \subseteq \bigcup_j \Delta^{(n-1)}_j$ and $\bigcup_j \Delta^{(n-1)}_j$ satisfies case $P$.
\end{itemize}
\end{definition}

\begin{remark}
Throughout the main proofs of the paper, the case $P$ will involve containment and intersection of $\Dcup$ with certain subsets of $K$. These are examples of cases in the sense of Definition \ref{case}. For example, Case 2b of Theorem 3 consists of $\Dcup$ intersecting exactly two of the base 1-cells $K^{(1)},K^{(2)},K^{(3)}$ in the Koch 2-surface $K$ while not being contained in the region $K^{(12)} \cup K^{(43)} \cup K^{(52)} \cup K^{(21)}$.
\end{remark}

\begin{definition}\label{d4.3}
In view of the notations in Definition \ref{d4.2}, we say that $\{\Delta_i^{(n)}\}_{i=1}^k \subset K^n$ is $P$-\textbf{scaleable} for a proposition $P$ if $\bigcup_{i=1}^k \Delta_i^{(n)} $ satisfies $P$ and there exists a similarity $S$ of ratio $r$ such that each $S^{-1}(\Delta_i^{(n)})$ is unique and in $K^{n-1}$, with $\bigcup_{i=1}^k S^{-1}(\Delta_i^{(n)})$ satisfying $P$. 
\end{definition}

We observe that if $\{\Delta_i^{(n)}\} \subset K^n$ is $P$-scaleable, then there exists an unique family in $K^n$ whose union coincides with that of $\{S^{-1}(\Delta_i^{(n)})\} 
\subset K^{n-1}$, thus satisfying $P$ as well. One then obtains the following result, which will be applied many times in the proofs of the central results of the paper to exclude certain subcases from consideration.

\begin{lemma}\label{scaleable}
Let 
$$a^{(P)}_n = \min_{1 \leq k \leq M^n} \min_{\Dset \subseteq K^n} \left\{ \frac{\left|\bigDcup\right|^s}{\mu\left(\bigDcup\right)} \,\Bigg\vert\, \bigDcup \text{ satisfies case } P\right\}$$
Furthermore, let
$$a'_n = \min_{1 \leq k \leq M^n} \min_{\Dset \subseteq K^n} \left\{ \frac{\left|\bigDcup \right|^s}{\mu\left(\bigDcup\right)} \,\Bigg\vert\, \bigDcup \ \text{satisfies case}\,\, P\,\,\text{and is not}\,\,P\text{-scaleable }\right\}.$$
Then $ a^{(P)}_n = a'_n$. That is, we may exclude $P$-scaleable families from consideration when calculating lower bounds for \,$\alpha^{(P)} := \displaystyle\lim_{n\rightarrow\infty} a^{(P)}_n$.
\end{lemma}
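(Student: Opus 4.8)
The plan is to prove the two inequalities $a_n^{(P)} \le a'_n$ and $a'_n \le a_n^{(P)}$ separately. The first is immediate: the families counted in $a'_n$ (those whose union satisfies $P$ and which are not $P$-scaleable) form a subcollection of those counted in $a_n^{(P)}$, and a minimum taken over a smaller collection can only increase, so $a_n^{(P)} \le a'_n$. The content of the lemma is therefore the reverse inequality, for which I would show that the minimum defining $a_n^{(P)}$ is always attained by some \emph{non}-$P$-scaleable family.

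The technical heart is a scale-invariance computation. Suppose $\mathcal{D} = \{\Delta_i^{(n)}\}_{i=1}^k \subseteq K^n$ is $P$-scaleable, with associated similarity $S$ of ratio $r \in (0,1)$ as in Definition \ref{d4.3}, so that each $\Delta_i' := S^{-1}(\Delta_i^{(n)})$ is a distinct element of $K^{n-1}$. Writing $U := \bigcup_i \Delta_i^{(n)}$ and $U' := S^{-1}(U) = \bigcup_i \Delta_i'$, I would first record that $|U'| = r^{-1}|U|$ since $S$ scales distances by $r$. Next, because $S$ is a single similarity sending each $n$-cell to an $(n-1)$-cell, the ratio of the defining products of contraction factors is the common value $r^{-1}$ for every $i$; raising to the power $s$ and using $\mu(K^{(\omega)}) = \prod_j r_{\omega_j}^s$ gives $\mu(\Delta_i') = r^{-s}\mu(\Delta_i^{(n)})$ for each $i$. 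Since distinct cells overlap in $\hh^s$-null sets by the OSC, summation yields $\mu(U') = r^{-s}\mu(U)$. Combining, the two factors $r^{-s}$ cancel:
\[
\frac{|U'|^s}{\mu(U')} = \frac{r^{-s}|U|^s}{r^{-s}\mu(U)} = \frac{|U|^s}{\mu(U)}.
\]
By the observation preceding the lemma, $U'$ is also the union of a unique family $\tilde{\mathcal{D}} \subseteq K^n$, and since $\bigcup \tilde{\mathcal{D}} = U'$ satisfies $P$, the family $\tilde{\mathcal{D}}$ satisfies $P$ and realizes the \emph{same} functional value as $\mathcal{D}$.

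With this invariance in hand, I would run a termination argument. The minimum $a_n^{(P)}$ is attained (it is a minimum over the finite and, by Definition \ref{case}, nonempty collection of subfamilies of the finite set $K^n$) by some $\mathcal{D}_0$ satisfying $P$ with value $a_n^{(P)}$. If $\mathcal{D}_0$ is not $P$-scaleable it already witnesses $a'_n \le a_n^{(P)}$; otherwise I replace it by the family $\mathcal{D}_1 = \tilde{\mathcal{D}}_0 \subseteq K^n$ produced above, which satisfies $P$, has the same value $a_n^{(P)}$, but has strictly larger diameter $|\bigcup \mathcal{D}_1| = r^{-1}|\bigcup \mathcal{D}_0| > |\bigcup \mathcal{D}_0|$. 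Iterating, I obtain families $\mathcal{D}_0, \mathcal{D}_1, \ldots \subseteq K^n$ all satisfying $P$ with common value $a_n^{(P)}$ and strictly increasing diameters. Since $K^n$ is finite there are only finitely many possible unions, so the strictly increasing diameters force the procedure to halt after finitely many steps at a family $\mathcal{D}_m$ that is no longer $P$-scaleable. Then $a'_n \le |\bigcup \mathcal{D}_m|^s / \mu(\bigcup \mathcal{D}_m) = a_n^{(P)}$, giving the reverse inequality and hence $a_n^{(P)} = a'_n$; the identity $\alpha^{(P)} = \lim_{n\to\infty} a_n^{(P)} = \lim_{n\to\infty} a'_n$ then follows at once.

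The step I expect to require the most care is the scale-invariance identity, specifically justifying that the single similarity $S$ forces the \emph{same} scaling factor $r^{-s}$ on the measure of every cell $\Delta_i^{(n)}$ even when the contraction ratios $r_j$ of the IFS are not all equal; this uniformity is exactly what makes the $r^{-s}$ factors cancel cleanly. A secondary subtlety is the bookkeeping that distinct $n$-cells, and the distinct $(n-1)$-cells $\Delta_i'$ (distinct by the uniqueness clause in Definition \ref{d4.3}), meet in $\hh^s$-null sets, so that $\mu$ is additive over both $U$ and $U'$; this additivity is what lets me pass from the per-cell measure identity to $\mu(U') = r^{-s}\mu(U)$.
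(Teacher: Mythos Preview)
Your proof is correct and follows essentially the same approach as the paper: both establish the trivial inequality $a_n^{(P)}\le a'_n$, then verify that the ratio $|U|^s/\mu(U)$ is invariant under the rescaling $U\mapsto S^{-1}(U)$, and iterate until a non-$P$-scaleable family is reached. The only minor difference is the termination argument---the paper tracks cardinality (each step multiplies the number of $n$-cells by $M$, so after at most $n$ steps one reaches all of $K^n$, which cannot be $P$-scaleable by the uniqueness clause), whereas you track strictly increasing diameters against the finiteness of $K^n$; both are valid and equally easy.
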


\begin{proof} Clearly $a_n^{(P)} \leq a'_n$. Suppose $\Dset \subset K^n$ is $P$-scaleable. Then $|S^{-1}(\Dcup)| = r^{-1}|\Dcup|$. Now note that every $S^{-1}(\Delta_i^{(n)}) \in K^{n-1}$ is the union of $\{\Delta_{i_j}^{(n)}\}_{j=1}^{M} \subset K^n$. By considering the family $\bigcup_{i=1}^k \{\Delta_{i_j}^{(n)}\}_{j=1}^M \subset K^n$, this satisfies case $P$ since $\bigcup_{i=1}^k \bigcup_{j=1}^M \Delta_{i_j} = \bigcup_{i=1}^k S^{-1}(\Delta_i^{(n)})$, and $Mk \leq M^n$ with
$$\frac{\left|\bigDcup\right|^s}{\mu\left(\bigDcup\right)} = \frac{r^{s} \left|S^{-1}\left(\bigDcup\right)\right|^s}{\mu\left(\bigDcup\right)} = \frac{\left|\bigcup_{i,j}\Delta_{i_j}^{(n)}\right|^s}{ \sum_{i=1}^k r^{-s} \mu\left(\Delta_i^{(n)}\right)} = \frac{\left|\bigcup_{i,j} \Delta_{i_j}^{(n)}\right|^s}{\mu\left(\bigcup_{i,j} \Delta_{i_j}^{(n)}\right)},$$
since $\sum_{i=1}^k r^{-s} \mu(\Delta_i^{(n)}) = \sum_{i=1}^k \mu(S^{-1}(\Delta_i^{(n)})) = \sum_{i=1}^k \mu( \bigcup_{j=1}^M \Delta_{i_j}^{(n)}) = \mu(\bigcup_{i,j} \Delta_{i_j}^{(n)})$. We may repeat this process if the family $\bigcup_{i=1}^K \{ \Delta_{i_j}^{(n)}\}_{j=1}^M$ is $P$-scaleable and so forth. We must eventually obtain a family that is not $P$-scaleable. Indeed, if one were able to apply this process $n$ times, then $M^nk \leq M^n$ and $k = 1$. Thus, the family obtained after $n$ steps must be exactly $K^n$, which is not $P$-scaleable due to the uniqueness condition $\{S^{-1}(\Delta^{(n)}_i)\}$ would need to satisfy. Thus the value $|\Dcup| \,/\, \mu(\Dcup)$ must be larger than the value achieved by some family which is not $P$-scaleable. Therefore $a_n^{(P)} \geq a'_n$. 
\end{proof}

\indent The following key result will be constantly applied in the proof of the central results of the paper, and has value of its own as it can be applied to general fractals. We present the general version below, which allows us to bound the limit $\alpha^{(P)} = \lim a_n^{(P)}$ for a case $P$ by the sequence $a_n$ multiplied by a factor which is: 
\begin{itemize}
    \item proportional to the diameter of $K$ and to any lower bound $\beta$ on the diameters of families $\Dcup$ which satisfy case $P$; and
    \item inversely proportional to the largest scaling ratio of maps in the IFS generating $K$, and the maximum Hausdorff distance between the 1-cells of $K$.
\end{itemize} 

We will then provide a more specific version of this result as Corollary \ref{T3corPar}, which will be useful to us when $K$ is a Koch $N$-surface $K_N$. Finally, we will find such lower bounds $\beta$ to obtain lower bounds on the Hausdorff dimension of $K_N$ by virtue of this following theorem.

\begin{theorem}\label{T3}
Let $\{a_n\},\,\{a_n^{(P)}\}$ be as in Proposition \ref{p1} and Lemma \ref{scaleable}, respectively, and let $\beta \leq |\Dcup|$, for every $\Dset \in K^n$ such that $ \Dcup \text{ satisfies } P$. Then 
$$\alpha^{(P)} \geq a_n |K|^{s} \exp{\frac{-s \gamma_{n}}{\beta (1 - r_{max})}},$$
\noindent where \,$r_{max} := \displaystyle\max_{1\leq i\leq M} r_i$
and \,$\gamma_n := 2r^{n}_{\max} \;\displaystyle \max_{1 \leq \ell, k \leq M} d_H(F_\ell(K),F_k(K))$.\\
\end{theorem}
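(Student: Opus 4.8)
The plan is to establish the inequality uniformly over all levels $m\ge n$. I will show that for \emph{any} family $\{\Delta_i^{(m)}\}_{i=1}^{k}\subseteq K^m$ whose union $U:=\bigcup_{i=1}^k\Delta_i^{(m)}$ satisfies case $P$, one has $|U|^s/\mu(U)\ge a_n\exp\!\big(-s\gamma_n/(\beta(1-r_{\max}))\big)$. Since the right-hand side is independent of $m$, taking the infimum over admissible families gives $a_m^{(P)}\ge a_n\exp(\cdots)$ for every $m\ge n$, and letting $m\to\infty$ yields the stated bound for $\alpha^{(P)}=\lim_m a_m^{(P)}$. It is worth noting that this route only invokes the unrestricted minimum $a_n$ (not $a_n^{(P)}$), so neither the heredity of Definition \ref{case} nor Lemma \ref{scaleable} is needed; the entire weight of the argument falls on a diameter comparison across scales.

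To set it up, for $n\le j\le m$ let $U^{(j)}$ denote the \emph{$j$-coarsening} of $U$: replace each cell $K^{(\omega^{(i)})}$ ($|\omega^{(i)}|=m$) by its level-$j$ ancestor $K^{([\omega^{(i)}]_j)}$ and take the union of the distinct ancestors, so that $U=U^{(m)}\subseteq U^{(m-1)}\subseteq\cdots\subseteq U^{(n)}=:\widehat U$, with each $U^{(j)}$ a union of level-$j$ cells. Because $\widehat U$ is a union of level-$n$ cells, the defining minimality of $a_n$ in Proposition \ref{p1} gives $|\widehat U|^s/\mu(\widehat U)\ge a_n$, and since $U\subseteq\widehat U$ with $\mu$ additive over cells under the OSC, we have $\mu(U)\le\mu(\widehat U)$. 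Hence
\[
\frac{|U|^s}{\mu(U)}\;\ge\;\frac{|U|^s}{\mu(\widehat U)}\;=\;\frac{|\widehat U|^s}{\mu(\widehat U)}\left(\frac{|U|}{|\widehat U|}\right)^{\!s}\;\ge\;a_n\left(\frac{|U|}{|\widehat U|}\right)^{\!s},
\]
so the whole problem reduces to bounding the diameter ratio $|U|/|\widehat U|$ from below.

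The crux is a one-step estimate $|U^{(j-1)}|\le |U^{(j)}|+\gamma_{j-1}$, where $\gamma_i:=2r_{\max}^{\,i}\max_{\ell,k}d_H(F_\ell(K),F_k(K))$. Indeed $U^{(j)}\subseteq U^{(j-1)}$, and any $p\in U^{(j-1)}$ lies in a parent cell $K^{(\tau)}$ ($|\tau|=j-1$) that was retained precisely because some child $K^{(\tau\ell)}$ belongs to $U^{(j)}$; writing $p\in K^{(\tau k)}$, one obtains $\operatorname{dist}(p,U^{(j)})\le d_H(K^{(\tau k)},K^{(\tau\ell)})=r_\tau\,d_H(F_k(K),F_\ell(K))\le r_{\max}^{\,j-1}\max_{\ell,k}d_H(F_\ell(K),F_k(K))$. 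Thus $d_H(U^{(j)},U^{(j-1)})\le\tfrac12\gamma_{j-1}$, giving the increment. The essential feature is that the per-level error is governed by the \emph{sibling} Hausdorff distances at scale $r_{\max}^{\,j-1}$, not by the crude full diameter $|K|$; this is exactly what lets the errors telescope into $\gamma_n$.

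Finally I accumulate the increments multiplicatively. Every coarsening contains $U$, so $|U^{(j)}|\ge|U|\ge\beta$ by hypothesis, whence $|U^{(j)}|/|U^{(j-1)}|\ge(1+\gamma_{j-1}/|U^{(j)}|)^{-1}\ge\exp(-\gamma_{j-1}/\beta)$. Telescoping over $j=n+1,\dots,m$ and using $\sum_{i\ge n}\gamma_i=\gamma_n/(1-r_{\max})$ gives $|U|/|\widehat U|\ge\exp\!\big(-\gamma_n/(\beta(1-r_{\max}))\big)$; substituting into the display above yields $|U|^s/\mu(U)\ge a_n\exp\!\big(-s\gamma_n/(\beta(1-r_{\max}))\big)$, which is the asserted bound, the factor $|K|^s$ recording the single-cell normalization $|K^{(\omega)}|^s/\mu(K^{(\omega)})=|K|^s$ built into $a_n$ (and equal to $1$ when $|K|=1$). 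I expect the main obstacle to be exactly the per-level diameter estimate: one must check that passing to parents perturbs the diameter only by the sibling distance $\gamma_{j-1}$ rather than by $|K|$, which forces the bookkeeping to be carried out one scale at a time, and one must use the multiplicative inequality $(1+t)^{-1}\ge e^{-t}$ (rather than $1-t$) so that the geometric series in $\gamma_i$ produces the exponential with its $(1-r_{\max})^{-1}$ factor instead of a weaker $(1-\cdot)^s$ bound.
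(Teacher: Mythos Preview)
Your argument is correct, and its core mechanism---the one-step diameter increment $|U^{(j-1)}|\le |U^{(j)}|+\gamma_{j-1}$, the bound $(1+\gamma_{j-1}/\beta)^{-1}\ge e^{-\gamma_{j-1}/\beta}$, and the telescoping geometric sum---is exactly the engine of the paper's proof. The organization, however, is genuinely different. The paper works level by level on the \emph{restricted} sequence: it uses the heredity clause of Definition~\ref{case} to produce, from a level-$n$ family satisfying $P$, a level-$(n-1)$ family still satisfying $P$, and thereby obtains the recursion $a_n^{(P)}\ge\big(\beta/(\beta+\gamma_{n-1})\big)^s a_{n-1}^{(P)}$; iterating and passing to the limit yields first $\alpha^{(P)}\ge a_n^{(P)}\exp(\cdots)$ and only then the weaker $\alpha^{(P)}\ge a_n\exp(\cdots)$. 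You instead fix a single level-$m$ family $U$, coarsen it straight to level $n$ without ever asking whether the intermediate coarsenings satisfy $P$, and compare against the unrestricted $a_n$. The payoff of your route is economy: as you note, it shows that neither the heredity condition in Definition~\ref{case} nor Lemma~\ref{scaleable} is actually needed for Theorem~\ref{T3} as stated. The payoff of the paper's route is the sharper intermediate inequality $\alpha^{(P)}\ge a_n^{(P)}\exp(\cdots)$, which your argument does not recover (your $\widehat U$ need not satisfy $P$, so $|\widehat U|^s/\mu(\widehat U)$ is only bounded below by $a_n$, not by $a_n^{(P)}$).
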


\begin{proof}
    Since $\hh^s(K) = \hh^s(K/|K|)/|K|^s$, we may suppose that $|K| = 1$.
    We now construct a proof motivated by the procedure found in \cite{bao2}. Indeed, since $P$ is a case for $n > 1$ and the family of $n$-cells $\{\Delta^{(n)}_i\}_i$, there exists a collection of $(n-1)$-cells $\Delta^{(n-1)}_j \in K^{n-1}$ such that $\Delta^{(n)}_i \subset \Delta^{(n-1)}_j$, $\bigcup_j \Delta^{(n-1)}_j$ satisfies case $P$, and $\Delta^{(n-1)}_1, \hdots, \Delta^{(n-1)}_{k_{n-1}}$ are all taken to be distinct. 
    Next, we claim that
    \begin{equation}\label{T3.1}
    \left|\bigcup_{j=1}^{k_{n-1}} \Delta_j^{(n-1)}\right|\, \leq\, d + \gamma_{n-1},\indent\,\,\,\textrm{ for}\,\,\,d : = \left|\Dcup \right|.
    \end{equation}
    To establish the claim, we proceed as follows. Note that for $x,y \in \bigcup_{j} \Delta^{(n-1)}_j$, $x \in \Delta^{(n-1)}_x$ and $y \in \Delta^{(n-1)}_y$ for some $\Delta^{(n-1)}_x,\Delta^{(n-1)}_y \in \{\Delta^{(n-1)}_j\}_j$. We then obtain 
    $$\|x-y\| \leq \min_{a \,\in\, \bigcup_i \Delta_i^{(n)} \cap\, \Delta_x^{(n-1)}}\|x-a\| + d + \min_{b \,\in\, \bigcup_i \Delta_i^{(n)} \cap\, \Delta_y^{(n-1)}}\|b-y\|.$$
Here we write $\bigcup_i \Delta_i^{(n)} \cap \Delta_x^{(n-1)}$ for simplicity, where one should take the union of the $\Delta_i^{(n)}$ which are contained in $\Delta_x^{(n-1)}$. Taking the supremum over $x,y \in \bigcup_j \Delta_j^{(n-1)}$ this yields
\begin{align*}
\left|\bigcup_j \Delta^{(n-1)}_j\right| 
&\leq d + 2 \max_{x \in \bigcup_j \Delta_j^{(n-1)}} \quad \min_{a \in \bigcup_i \Delta_i^{(n)} \cap \Delta_x^{(n-1)}} \|x-a\|\\
&= d + 2 \max_j \max_{x \in \Delta_j^{(n-1)}} \quad \min_{a \in \bigcup_i \Delta_i^{(n)} \cap \Delta_j^{(n-1)}} \|x-a\|.
\end{align*}
Re-scaling each $\Delta_j^{(n-1)}$ onto $K$, every $\Delta_i^{(n)}$ contained in $\Delta_j^{(n-1)}$ is mapped to some $\Delta_k^{(1)}$. Taking the maximum over all families of 1-cells, one bounds the previous term as follows
\begin{align*}
\left|\bigcup_j \Delta^{(n-1)}_j\right| 
&\leq d + 2 r_{max}^{n-1} \max_{x \in K} \;\max_{\{\Delta^{(1)}_k\}_k \subseteq K^1} \; \min_{a \in \bigcup_k \Delta^{(1)}_k}\|x-a\|\\
&\leq
d + 2 r_{max}^{n-1} \max_{x \in K} \;\max_{\{\Delta^{(1)}_k\}_k \subseteq K^1} \; d_H\left(\{x\},\bigcup_k \Delta_k^{(1)}\right)\\
&\leq
d + 2 r_{max}^{n-1} \max_{x \in K} \;\max_{\{\Delta^{(1)}_k\}_k \subseteq K^1} \; \max_{\Delta_{k_0} \in \{\Delta^{(1)}_k\}_k} d_H\left(\{x\},\Delta_{k_0}^{(1)}\right)\\
&=
d + 2 r_{max}^{n-1} \max_{x \in K} \;\max_{\Delta^{(1)}_k \in K^1} \; d_H\left(\{x\},\Delta_{k}^{(1)}\right).
\end{align*}
Recall that each $\Delta_k^{(1)} \in K^1$ is given by $F_k(K)$, hence
$$
\left|\bigcup_j \Delta^{(n-1)}_j\right| \leq
d + 2r^{n-1}_{max} \max_{1 \leq k \leq M} d_H(K,F_k(K)),
$$
where the latter value is precisely $d + \gamma_{n-1}$. Thus (\ref{T3.1}) is established, as desired. From here, taking into account the monotonicity of the function $f(x) = (x+\gamma_{n-1})/x$ for $x > 0$ and a fixed $n$, we deduce that\, 
    $|\bigcup_{i} \Delta_i^{(n-1)}|/d \leq (d + \gamma_{n-1})/d \leq (\beta + \gamma_{n-1})/\beta$. Moverover, the fact that $\cup_i \Delta_i^{(n)} \subseteq \cup_j \Delta_j^{(n-1)}$ implies $\mu(\cup_i \Delta_i^{(n)}) \leq \mu(\cup_j \Delta_j^{(n-1)})$. We then obtain
    \begin{equation}\label{T3.2}
    \frac{d^s}{\mu\left(\bigcup_i \Delta_i^{(n)}\right)} \geq \left(\frac{\beta}{\beta + \gamma_{n-1}}\right)^s \frac{\left|\bigcup_i \Delta_i^{(n-1)}\right|^s}{\mu\left(\bigcup_j \Delta^{(n-1)}_j\right)}.
    \end{equation}
 Taking infima over both sides in (\ref{T3.2}), we conclude
    $$a_n^{(P)}
    \geq \left(\frac{\beta}{\beta + \gamma_{n-1}}\right)^s a_{n-1}^{(P)}.$$
    Then, for any $m \geq 1$, proceeding inductively, we arrive at
    \begin{equation}\label{T3.3}
    a_{n+m}^{(P)} \geq a_n^{(P)} \prod_{i = n}^{m+1} \left(\frac{\beta}{\beta + \gamma_i}\right)^s = a_n^{(P)} \prod_{i = n}^{m+1} \left(1 + \frac{\gamma_i}{\beta}\right)^{-s}.
    \end{equation}
    Taking logarithms on both sides in (\ref{T3.3}), and using the inequality $\ln(1+x) < x$, valid for $x >0$, we find that
    \begin{equation}\label{T3.4}
        \ln(a_{n+m}^{(P)}) \geq \ln(a_n^{(P)}) - s \sum_{i=n}^{m+1} \ln\left(1 + \frac{\gamma_i}{\beta} \right) \geq \ln(a_n^{(P)}) - s\sum_{i=n}^{m+1} \frac{\gamma_i}{\beta}.
    \end{equation}
    Proceeding as in Propositions \ref{p1} and \ref{p2}, one sees that the sequence $\{a_n^{(P)}\}$ is decreasing and bounded. Setting $\alpha^{(P)} := \displaystyle\lim_{n \rightarrow \infty} a_n^{(P)}$, and letting $m \rightarrow \infty$ in (\ref{T3.4}), we have
    $$\ln(\alpha^{(P)}) \geq \ln(a_n^{(P)}) -s \left[ \frac{\gamma_{n} / \beta}{1 - r_{max}}\right] = \ln\left(a_n^{(P)} \exp{ \frac{-s \gamma_{n}}{\beta (1 - r_{max})}}\right).$$
    Therefore, $\alpha^{(P)} \geq a_n^{(P)}\exp{\frac{-s \gamma_{n}}{\beta (1 - r_{max})}} \geq a_n \exp{\frac{-s \gamma_{n}}{\beta (1 - r_{max})}}$, completing the proof.
\end{proof}
\indent\\
\indent An useful form of the preceding theorem for particular types of sets $K$ reads as follows.

\begin{corollary}\label{T3corPar}
Under the assumptions and notations of Theorem \ref{T3}, assume that $|K| = 1$, $r_{max} = r_{\min} = r$, and $\max_{\ell,k} d_H(F_\ell(K),F_k(K)) = 1 - r$. Then
$$\alpha_n^{(P)} \geq a_n \exp{-\frac{s \cdot 2r^{n}}{\beta}}.$$
\end{corollary}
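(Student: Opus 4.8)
The plan is to obtain the corollary as an immediate specialization of Theorem \ref{T3}; no new machinery is needed, and the entire content is a simplification of the general bound under the three stated hypotheses. First I would invoke Theorem \ref{T3}, whose conclusion reads
$$\alpha^{(P)} \geq a_n |K|^s \exp{\frac{-s\gamma_n}{\beta(1 - r_{max})}},$$
and recall the definition $\gamma_n = 2 r_{max}^n \max_{1 \leq \ell, k \leq M} d_H(F_\ell(K), F_k(K))$.

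Next I would substitute the hypotheses one at a time. The assumption $|K| = 1$ makes $|K|^s = 1$, so that factor disappears. Writing $r_{max} = r$ (consistent with $r_{max} = r_{\min} = r$) and using $\max_{\ell,k} d_H(F_\ell(K), F_k(K)) = 1 - r$, the constant $\gamma_n$ becomes $\gamma_n = 2 r^n (1 - r)$. The decisive simplification is the cancellation inside the exponent,
$$\frac{-s \gamma_n}{\beta(1 - r_{max})} = \frac{-s \cdot 2 r^n (1-r)}{\beta (1-r)} = -\frac{s \cdot 2 r^n}{\beta},$$
where the factor $1 - r$ in the numerator cancels $1 - r_{max} = 1 - r$ in the denominator; this is legitimate because each contraction ratio satisfies $0 < r < 1$, so $1 - r > 0$. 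Feeding these back into the bound of Theorem \ref{T3} yields exactly $\alpha^{(P)} \geq a_n \exp(-s \cdot 2 r^n / \beta)$, the claimed inequality.

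There is no genuine obstacle here: the statement is purely a book-keeping corollary of Theorem \ref{T3}, and the only point needing (minor) care is verifying that the two $(1 - r)$ factors cancel cleanly, which rests on the two geometric hypotheses that all maps share the common ratio $r$ and that the extremal pairwise Hausdorff distance between $1$-cells equals $1 - r$. These are precisely the features enjoyed by a Koch $N$-surface $K_N$ — every generating map has ratio $1/N$, and the $1$-cells are positioned so that their maximal pairwise Hausdorff distance is $1 - 1/N$ — which is why this streamlined form is the version that will actually be applied in the subsequent sections.
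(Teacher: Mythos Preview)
Your proposal is correct and matches the paper's approach: the corollary is stated in the paper without proof, as it is an immediate specialization of Theorem~\ref{T3} obtained by substituting the hypotheses and cancelling the factor $1-r$. Your write-up simply makes that substitution explicit.
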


\indent It is easily verified that fractals such as the Cantor set, Sierpinski gasket, Koch curve, and Koch $N$-surfaces all satisfy the conditions in Corollary \ref{T3corPar}.

\indent\\
\indent We now present the central results of the paper.

\begin{theorem}\label{T2}
Let $a_n$ be a sequence given as in Proposition \ref{p1} for the Koch 2-surface $K_2$ as seen in Example \ref{koch2surface}. Then for every $n\in\mathbb{N\!}\,$, the Hausdorff measure of $K_2$ satisfies the following estimation:
\begin{equation}\label{T2est}
a_n \,\geq \,\hh^{s_2}(K_2) \,\geq \,a_n\, \exp{-\frac{{s_2}(\sqrt{2}+\sqrt{6})}{2^{n-6}}},
\end{equation}
where we recall that $s_2:=\frac{\log(6)}{\log(2)}$.
\end{theorem}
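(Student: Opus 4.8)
\textbf{Proof proposal for Theorem \ref{T2}.}

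The plan is to obtain the upper bound directly from Proposition \ref{p2} and the lower bound from Theorem \ref{T3} (or its Corollary \ref{T3corPar}), so the real work lies in verifying the hypotheses of that corollary for $K_2$ and in identifying the correct constant $\beta$. First I would observe that the upper estimate $a_n \geq \hh^{s_2}(K_2)$ is immediate from Proposition \ref{p2}, since $\{a_n\}$ is decreasing with limit $\hh^{s_2}(K_2)$. For the lower bound, the goal is to apply Corollary \ref{T3corPar}, which requires that $|K_2| = 1$, that all contraction ratios coincide with a common value $r$, and that $\max_{\ell,k} d_H(F_\ell(K),F_k(K)) = 1 - r$. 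For $K_2$ we have $r = 1/2$ (all six maps $F_{1,2},\dots,F_{6,2}$ have ratio $1/2$), so the normalization $|K_2| = 1$ and the Hausdorff-distance condition need to be checked against the explicit maps in Example \ref{koch2surface}.

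Second, I would take the trivial case $P$ (the proposition satisfied by every family, so that $a_n^{(P)} = a_n$) to reduce to bounding the genuine sequence $\{a_n\}$ from below. With that choice, Corollary \ref{T3corPar} gives
$$
\hh^{s_2}(K_2) = \alpha^{(P)} = \lim_n a_n^{(P)} \geq a_n \exp\!\left(-\frac{s_2 \cdot 2 r^{n}}{\beta}\right),
$$
where $\beta$ is any lower bound on $|\Dcup|$ valid for all admissible families. The task then is to pin down the numerical constants so that the exponent matches $-s_2(\sqrt2+\sqrt6)/2^{\,n-6}$. Comparing exponents, with $r = 1/2$ one needs $2 r^{n}/\beta = (\sqrt2+\sqrt6)/2^{\,n-6}$, i.e. $2^{1-n}/\beta = 2^{6-n}(\sqrt2+\sqrt6)$, which forces $\beta = 2^{-5}/(\sqrt2+\sqrt6) = 1/\big(32(\sqrt2+\sqrt6)\big)$. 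So the crux is to justify that this value of $\beta$ is a legitimate lower bound on the diameter of every relevant family $\Dcup$ — in effect, a lower bound on the smallest diameter of a single $1$-cell or the minimal configuration one must consider, expressed after the normalization $|K_2|=1$.

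Third, I would verify the normalization and the Hausdorff-distance hypothesis. The diameter of $K_2$ and the quantity $\max_{\ell,k} d_H(F_\ell(K_2),F_k(K_2))$ must both be computed from the coordinates of the four tetrahedron vertices $p_1,p_2,p_3,p_4$; here the appearance of $\sqrt2$ and $\sqrt6$ in the final estimate is a direct signature of the out-of-plane apex $p_4$, whose height involves $\sqrt6/18$-type terms from $F_{4,2},F_{5,2},F_{6,2}$. I expect the main obstacle to be precisely this geometric bookkeeping: confirming that after rescaling $K_2$ to unit diameter the equality $\max_{\ell,k} d_H = 1 - r = 1/2$ holds, and that the smallest cell diameter yields exactly the constant $\beta = 1/\big(32(\sqrt2+\sqrt6)\big)$ so that the exponent collapses to $-s_2(\sqrt2+\sqrt6)/2^{\,n-6}$. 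Because $K_2$ is the degenerate case where three of the six maps generate Sierpiński gaskets (Figure \ref{fig:sierpinski}), some care is needed to ensure no pathological family yields a smaller diameter than $\beta$; once that is secured, the estimate \eqref{T2est} follows by substituting the computed constants into Corollary \ref{T3corPar}.
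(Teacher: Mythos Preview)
Your approach has a genuine gap in the second step. Taking the trivial case $P$ (satisfied by every family) and seeking a single constant $\beta>0$ with $\beta\le |\Dcup|$ for \emph{all} admissible families cannot work: a family consisting of a single $n$-cell has diameter $2^{-n}$, so no positive uniform lower bound on $|\Dcup|$ exists. Your value $\beta=1/\big(32(\sqrt2+\sqrt6)\big)$ is simply reverse-engineered from the target exponent, not a diameter bound that any family actually satisfies. Corollary \ref{T3corPar} is therefore not applicable with the trivial $P$.

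The paper's proof circumvents exactly this obstruction, and this is where the real content lies. One first observes (via Lemma \ref{scaleable}) that $P$-\emph{scaleable} families may be discarded when computing $a_n^{(P)}$; in particular any family contained in a single $1$-cell is scaleable and can be ignored. One then partitions the remaining (non-scaleable) families into finitely many geometric cases $P$, according to which of the six $1$-cells $K^{(1)},\dots,K^{(6)}$ the set $\Dcup$ meets, with further subcases governed by certain ``critical'' corner, edge, and square regions (the latter handled by a separate technical lemma). In each subcase a genuine positive lower bound $\beta_P$ on the diameter is obtained from explicit constraint configurations; the worst subcase yields $\beta=\sqrt2(\sqrt3-1)\,2^{-7}$, which is numerically equal to your reverse-engineered value and produces the exponent $-s_2(\sqrt2+\sqrt6)/2^{\,n-6}$ after substitution into Corollary \ref{T3corPar}. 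Finally one uses $\hh^{s_2}(K_2)=\lim_n a_n=\min_P \alpha^{(P)}$ to assemble the global lower bound. The missing idea in your proposal is precisely this interplay between Lemma \ref{scaleable} and the case decomposition: without excluding scaleable families there is no $\beta$, and without the case analysis there is no way to compute one.
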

\indent\\
\indent One can calculate and find that $a_1= b_3 = 2\left|K^{(4)} \cup K^{(5)} \cup K^{(6)}\right|^{s_2} = 2\left(\frac{\sqrt{6}}{4}\right)^{s_2}$.

\begin{theorem}\label{generalmainthm}
Let $N > 2$ such that $N \not\equiv 0\, (\text{mod } 3)$, and let $a_n$ be a sequence as in Proposition \ref{p1} for the Koch $N$-surface $K_N$ defined by Definition \ref{kochNsurface}. Then for every $n \in \N$, the Hausdorff measure of the surface $K_N$  satisfies the following estimation:

$$a_n \,\geq\, \hh^{s_N}(K_N) \,\geq\, a_n \exp{-\frac{s_N \sqrt{6}}{N^{n-3}}}$$
where we recall that $s_N := \frac{\log(N^2+2)}{\log(N)}$.
\end{theorem}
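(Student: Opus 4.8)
The plan is to prove the two inequalities separately, with the left-hand estimate being essentially free and the right-hand estimate carrying all the weight. For the upper bound $a_n \geq \hh^{s_N}(K_N)$ I would simply invoke Proposition \ref{p2}: the sequence $\{a_n\}$ is decreasing with $\lim_{n}a_n = \hh^{s_N}(K_N)$, so every term dominates its limit. Everything genuine lies in the lower bound, and the natural engine for it is Corollary \ref{T3corPar}, whose hypotheses (common ratio $r = 1/N$, normalization $|K_N| = 1$, and $\max_{\ell,k} d_H(F_\ell(K_N),F_k(K_N)) = 1 - 1/N$) are precisely the ones remarked to hold for the Koch $N$-surfaces immediately after that corollary. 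After normalizing so that $|K_N| = 1$ (legitimate by the rescaling argument opening the proof of Theorem \ref{T3}), the corollary will hand us $\hh^{s_N}(K_N) \geq a_n \exp(-2 s_N r^n/\beta)$ with $r = 1/N$, so the entire task collapses to identifying the correct diameter lower bound $\beta$.

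Matching the factor $\exp(-2 s_N N^{-n}/\beta)$ produced by Corollary \ref{T3corPar} against the target factor $\exp(-s_N\sqrt{6}\,N^{-(n-3)})$ forces $\beta$ to be proportional to $N^{-3}$ with constant $\tfrac{2}{\sqrt 6} = \tfrac{\sqrt 6}{3}$, which one recognizes as the height-to-edge ratio of a regular tetrahedron. Thus $\beta$ should be read as the height of a regular peak tetrahedron three subdivision levels deep, and the $\sqrt 6$ appearing in the statement is the geometric fingerprint of that peak (just as in the $N=2$ computation $a_1 = 2(\sqrt6/4)^{s_2}$, where $\sqrt6$ already enters through the three peak cells $K^{(4)}\cup K^{(5)}\cup K^{(6)}$). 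To supply this $\beta$ rigorously I would set up the case $P$ of Definition \ref{case} encoding that a family reaches into the peak region, use Lemma \ref{scaleable} to discard all $P$-scaleable families so that $a_n^{(P)} = a_n$ (whence $\alpha^{(P)} = \hh^{s_N}(K_N)$), and then show that every surviving non-scaleable configuration relevant to the minimum has diameter at least $\beta$. Granting this, Corollary \ref{T3corPar} applied to $P$ yields the claimed inequality, and undoing the normalization restores the stated constant.

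The main obstacle I anticipate is exactly this last geometric step: enumerating how a non-scaleable family can sit relative to the peak tetrahedron and ruling out the degenerate thin configurations (for instance two adjacent $n$-cells clustered near a shared edge) whose diameters would otherwise drive the infimum of $|\Dcup|$ to zero and destroy the bound. The role of Lemma \ref{scaleable} is precisely to excise the configurations contained in a single $1$-cell, but one still has to argue that the remaining candidate minimizers cannot localize to an arbitrarily small neighborhood of an edge and must instead span a full copy of the peak, thereby inheriting its height as a uniform lower diameter bound. I expect this case-by-case bookkeeping to be the delicate heart of the argument. Here the hypothesis $N > 2$ should be genuinely helpful relative to the $N = 2$ analysis of Theorem \ref{T2} (Section \ref{sec5}): for $N > 2$ the triangulation $T_N$ is rich enough that a single peak configuration dominates the minimum, producing the clean single term $\sqrt 6$, whereas for $N = 2$ competing configurations force the composite constant $\sqrt 2 + \sqrt 6$ and a correspondingly more intricate case analysis.
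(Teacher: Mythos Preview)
Your high-level strategy is right—the upper bound is Proposition~\ref{p2}, the lower bound comes from Corollary~\ref{T3corPar}, and the target $\beta$ is indeed $\sqrt{6}/(3N^3)$—but there is a structural gap in how you propose to feed that $\beta$ into the machinery. You suggest setting up a \emph{single} case $P$ (``the family reaches into the peak region'') and then asserting $a_n^{(P)} = a_n$ after discarding $P$-scaleable families. Neither of those moves is justified. Lemma~\ref{scaleable} only tells you $a_n^{(P)}$ equals the minimum over non-$P$-scaleable families \emph{that still satisfy $P$}; it says nothing about equality with the unrestricted $a_n$. In general $a_n \le a_n^{(P)}$, with equality only if every minimizing family satisfies $P$—and there is no reason the global minimizer must touch the peak. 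Families contained entirely in the base triangle $T$ (away from $K^{(4)}\cup K^{(5)}\cup K^{(6)}$) are perfectly legitimate competitors for $a_n$ and must be handled, not assumed away.

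The paper's proof therefore does not use one case but a partition into four exhaustive cases $P^{(1)},\ldots,P^{(4)}$ (with further subcases), indexed by how many of the three base $1$-cells $K^{(1)},K^{(2)},K^{(3)}$ adjacent to the peak the family intersects, so that $a_n = \min_i a_n^{(P^{(i)})}$ and hence $\hh^{s_N}(K_N)=\min_i \alpha^{(P^{(i)})}$. Each case gets its own $\beta_i$ via Corollary~\ref{T3corPar}, and the final bound is the worst of them. The binding constraint $\beta = \sqrt{6}/(3N^3)$ does \emph{not} arise as ``the height of a level-three peak tetrahedron'' as you conjecture; it comes from a separate technical lemma (the paper's Lemma~\ref{flap}) which analyzes families straddling the common edge of two adjacent $1$-cells meeting at dihedral angle $\arccos(1/3)$, $0$, or $\arccos(1/3)-\pi$. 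This edge-straddling situation is exactly the ``degenerate thin configuration'' you flag in your last paragraph, and it occurs in three subcases (3c, 4b, 4d), including one (4d) entirely within the base with no peak involvement at all. The reason $N>2$ is cleaner than $N=2$ is not that ``a single peak configuration dominates,'' but that the self-similar covering of a critical edge by $N$ copies plus $N-1$ hexagons in Lemma~\ref{flap} replaces the more delicate square-region analysis (Lemma~\ref{square}) needed when $N=2$; the case decomposition itself is just as elaborate.
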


\section{Proof of Theorem \ref{T2}}\label{sec5}
The leftmost inequality follows immediately from Proposition \ref{p2}. We now focus on the remaining inequality. To aid in legibility, we shall write $K := K_2$ for the Koch 2-surface. Let $\{ \Delta_i^{(n)} \}_i$ be a collection of $n$-cells in $K$ and let $d = |\bigcup_{i=1}^k \Delta_i^{(n)}|$ be the diameter of such collection. We will organize the proof by cases, based on how many of the bottom 1-cells $K^{(1)},K^{(2)},K^{(3)}$ the family $\bigcup_{i = 1}^k \Delta^{(n)}_i$ intersects. These cases will subdivided by how many of the top 1-cells $K^{(4)},K^{(5)},K^{(6)}$ the family $\bigcup_{i = 1}^k \Delta^{(n)}_i$ intersects and strategically excluding scenarios when $\bigcup_{i = 1}^k \Delta^{(n)}_i$ is scaleable. This will allow us find a constant $\beta > 0$ as in Theorem \ref{T3} and Corollary \ref{T3corPar}; that is, $\beta \leq d$ whenever $\{\Delta_i^{(n)}\}$ satisfies the case under consideration. 

\begin{remark}
In order to find such a lower bound $\beta > 0$, we will consider families $\Dset$ such that $k$ is as small as possible (so they have minimal diameter for a fixed $n$) while satisfying the case in question. Intuitively, as $n \to \infty$, such $\Dset$ approximate a family of points in $K$. The diameter of this set of points is meant to yield an optimal value for $\beta > 0$ independent of $n$. Moreover, since our cases involve intersecting certain regions in $K$, each of these points represents a ``constraint'' corresponding to one of these regions. Throughout our proof, we provide diagrams depicting these sets of points (or constraints) whose diameter is $\beta$.
\end{remark}

We first provide a definition and lemma will be used for cases where there is a square ``critical region'', that is, a square which intersects families $\Dset \subset K^n$ satisfying the case under consideration, with arbitrarily small diameters as we let $n \to \infty$. As these regions cause problems when trying to find lower bounds on diameters, we must build some machinery to tackle these obstacles throughout the proof of our first main result.

\begin{definition}
Let $A_\ell \subset K$ be a square of side length $\frac{\ell\sqrt{2}}{2}$ with a distinguished diagonal $L$ of length $\ell$. Furthermore, let $A_\ell^n := \{ \Delta \cap A_\ell \mid \Delta \in K^n\}$ be the set of $n$-cells of $A_l$. We say that $\{\Theta^{(n)}_i\}_{i=1}^k \subseteq A_\ell^n$ is {\bf P-scaleable in A$_\ell$}, if there exists a $P$-scaleable $\{\Delta_i\}_{i=1}^k \subset K^n$ such that $\Delta_i^{(n)} \cap A_\ell = \Theta^{(n)}_i$ for all $1 \leq i \leq k$.  

\begin{figure}[ht]
\begin{tikzpicture}[scale=1.3]
\filldraw[fill=black!20] (0,1) -- (0,-1) -- (-1,0) -- (0,1);
\filldraw[fill=black!60] (0,1) -- (0,-1) -- (1,0) -- (0,1);
\draw (-0.5,0.5) -- (0.5,-0.5);
\draw (0.5,0.5) -- (-0.5,-0.5);
\draw (-0.75,0.25) -- (0.25,-0.75);
\draw (-0.25,0.75) -- (0.75,-0.25);
\draw (-0.25,-0.75) -- (0.75,0.25);
\draw (-0.75,-0.25) -- (0.25,0.75);
\draw[blue, ultra thick] (0,1) -- (0,-1);
\node at (0,1.2) {\textcolor{blue}{L}};
\end{tikzpicture}
\caption{The square $A_\ell$ with the diagonal $L$ down the middle in blue.}
\end{figure}
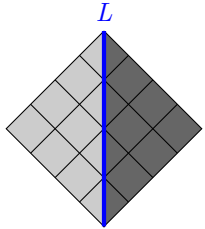
\end{definition}

Armed with the previous definition, we may express the required conditions for the following lemma, which allows us to obtain lower bounds on diameters of families $\Dset$ satisfying
Cases 3c.ii, 3c.iii, and 4b. As mentioned before, we will provide diagrams depicting sets of constraints whose diameter is $\beta$, and we will color each constraint by the region it represents. This is meant to help the reader get their bearing on the process of finding these values for $\beta$. Later in the proof of Theorem 3, we simply color all dots with green. We will also graph the segments between these points (in green) in order to aid the reader in computing the diameters of these collections of points.

\begin{lemma} \label{square}
Let $\Dset \subset K^n$ and $\Theta^{(n)}_i := \Delta_i^{(n)} \cap A_\ell \neq \varnothing$ for all $1 \leq i \leq k$. Suppose that the following conditions hold
\begin{itemize}
    \item $|\Dcup| \leq |\bigcup_{i=1}^k \Theta^{(n)}_i|$,
    \item $\bigcup_{i=1}^k \Theta^{(n)}_i$ intersects both $H_L$ and $H_R$, the regions left and right of the distinguished diagonal $L$ respectively,
    \item $\Dset$ is $P$-scaleable if and only if $\{\Theta^{(n)}_i\}_{i=1}^k$ is $P$-scaleable in $A_\ell$.
\end{itemize}
Then $\Dset$ is scaleable or 
$$d := |\Dcup| \geq \ell (\sqrt{3} - 1) \left(\frac{1}{2}\right)^4.$$
\end{lemma}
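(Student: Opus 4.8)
The plan is to reduce the whole estimate to the geometry inside the square $A_\ell$ and then run a self-similar descent. First I would record that the first hypothesis $|\Dcup| \leq |\bigcup_{i=1}^k \Theta_i^{(n)}|$, combined with the trivial inclusion $\bigcup_{i=1}^k \Theta_i^{(n)} \subseteq \Dcup$, forces the equality $d = |\Dcup| = |\bigcup_{i=1}^k \Theta_i^{(n)}|$. Hence the diameter we must bound below is already attained by points lying in $A_\ell$, and we may argue entirely within the square. I would then take the nontrivial branch of the dichotomy and assume $\Dset$ is not $P$-scaleable; by the third hypothesis this is equivalent to $\{\Theta_i^{(n)}\}_{i=1}^k$ not being $P$-scaleable in $A_\ell$. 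The target is thus to show that every non-scaleable family whose trace on $A_\ell$ meets both $H_L$ and $H_R$ has $d \geq \ell(\sqrt{3}-1)(1/2)^4$.

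The core mechanism is the self-similar subdivision of $A_\ell$. Under the refinement $K^{n} \to K^{n+1}$, the square $A_\ell$ decomposes into four congruent subsquares, each the image of $A_{\ell/2}$ under a similarity of ratio $1/2$ compatible with the cell structure; two of these subsquares straddle the diagonal $L$, while the remaining two lie one on each side of $L$. The key observation is that, \emph{given} the spanning hypothesis, the trace $\{\Theta_i^{(n)}\}$ can be confined to a single subsquare only if that subsquare straddles $L$ (confinement to a non-straddling one would violate the spanning hypothesis outright). But if $\{\Theta_i^{(n)}\}$ were confined to a straddling subsquare, pulling the intersections back by the corresponding ratio-$1/2$ similarity would exhibit $\{\Theta_i^{(n)}\}$ as $P$-scaleable in $A_\ell$, contradicting non-scaleability. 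Consequently a non-scaleable spanning family can never be confined to one subsquare at any level of the subdivision.

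Next I would iterate this confinement obstruction in tandem with the spanning condition, descending through the subdivision and tracking the coarsest scale at which the family is forced to occupy two sub-triangles of the underlying equilateral grid lying on opposite sides of $L$. The bookkeeping is designed to show this first occurs after four halving steps, which accounts for the factor $(1/2)^4$. At that scale, letting $n \to \infty$, the minimal admissible configuration collapses to a fixed pair of constraint points sitting at prescribed vertices of the triangular grid, one in $H_L$ and one in $H_R$. A direct coordinate computation of the distance between these two vertices — with the $\sqrt{3}$ arising from the height of the equilateral cells and the $-1$ from the offset measured along $L$ — yields exactly $\ell(\sqrt{3}-1)(1/2)^4$. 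Since $d$ bounds below the distance between any two points of $\bigcup_{i=1}^k \Theta_i^{(n)}$, and the family must dominate this minimal separated pair, the bound $d \geq \ell(\sqrt{3}-1)(1/2)^4$ follows.

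The hard part will be the descent/confinement bookkeeping: proving rigorously that a non-scaleable family meeting both halves of $A_\ell$ must, after precisely four subdivision steps and not sooner, be pinned to two separated sub-triangles, and that \emph{every} configuration of smaller diameter is in fact $P$-scaleable in $A_\ell$. This demands a careful analysis of how the diagonal $L$ cuts across the cell boundaries of the equilateral grid at each level, so that no closer non-scaleable spanning configuration is overlooked and the depth producing $(1/2)^4$ is pinned down exactly. Once the minimal configuration is correctly isolated, the concluding distance computation is routine and the stated value of $\beta$ drops out.
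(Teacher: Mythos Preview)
Your reduction to the trace on $A_\ell$ and the general strategy of ``non-scaleable $\Rightarrow$ cannot be confined to a single self-similar subregion'' are sound, but the descent you describe does not match the geometry and will not produce the bound as stated.

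First, the subdivision is not into four congruent subsquares with two of them straddling $L$. The relevant self-similar subregions are three half-scale copies $R_1,R_2,R_3$ of $A_\ell$, centered along the diagonal $L$ and \emph{overlapping} one another ($R_2$ sits between and overlaps both $R_1$ and $R_3$). Confinement to any single $R_j$ is what forces $P$-scaleability in $A_\ell$; confinement to one of your non-straddling quarter-squares contradicts the $H_L$/$H_R$ hypothesis directly and never invokes scaleability, so your dichotomy is mislocated.

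Second, the factor $(1/2)^4$ does not come from iterating the subdivision four times. The paper's argument is a single-level finite case analysis: one checks separately the cases $\bigcup_i\Theta_i^{(n)}\subset R_1\cup R_2$ (or $R_2\cup R_3$), $\subset R_1\cup R_2\cup R_3$, and $\not\subset R_1\cup R_2\cup R_3$, and in each case reads off a lower bound on $d$ from explicit constraint points. The worst case is $\bigcup_i\Theta_i^{(n)}\subset R_1\cup R_2$ with $\bigcup_i\Theta_i^{(n)}\not\subset R_1$ and $\not\subset R_2$: here the family must meet $R_1\setminus R_2$, $R_2\setminus R_1$, and (say) $H_R$, and the minimal configuration is not a pair of points but \emph{three} constraints forming an equilateral triangle of side $\ell(\sqrt{3}-1)(1/2)^4$. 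Your two-point picture and the ad hoc justification of $\sqrt{3}-1$ miss this, and the four-step iteration you propose never terminates at a well-defined minimal configuration.
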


\begin{proof}
We define $R_1$, $R_2$, $R_3$ to be the following self-similar regions of scale $1/2$. 

\begin{figure}[ht]
\begin{tikzpicture}[scale=1.3]
\filldraw[fill=black!20] (0,1) -- (0,-1) -- (-1,0) -- (0,1);
\filldraw[fill=black!60] (0,1) -- (0,-1) -- (1,0) -- (0,1);
\draw (-0.5,0.5) -- (0.5,-0.5);
\draw (0.5,0.5) -- (-0.5,-0.5);
\draw (-0.75,0.25) -- (0.25,-0.75);
\draw (-0.25,0.75) -- (0.75,-0.25);
\draw (-0.25,-0.75) -- (0.75,0.25);
\draw (-0.75,-0.25) -- (0.25,0.75);
\draw[ultra thick] (0,1) -- (0,-1);
\filldraw[fill=red, fill opacity = 0.4, draw=red, ultra thick] (0,0) -- (0.5,0.5) -- (0,1) -- (-0.5,0.5) -- (0,0);
\filldraw[fill=blue, fill opacity = 0.4, draw=blue, ultra thick] (0,0-1) -- (0.5,0.5-1) -- (0,1-1) -- (-0.5,0.5-1) -- (0,0-1);
\filldraw[fill=violet, fill opacity = 0.4, draw=violet, ultra thick] (0,0-0.5) -- (0.5,0.5-0.5) -- (0,1-0.5) -- (-0.5,0.5-0.5) -- (0,0-0.5);
\node at (1.25,0) {\textcolor{white}{$R_2$}};
\node at (-1.25,0) {\textcolor{violet}{$R_2$}};
\node at (-1.25,0.5) {\textcolor{red}{$R_1$}};
\node at (-1.25,-0.5) {\textcolor{blue}{$R_3$}};
\end{tikzpicture}
\caption{The square $A_\ell$ with the regions $R_1$ (red), $R_2$ (purple), and $R_3$ (blue) from top to bottom respectively}
\end{figure}
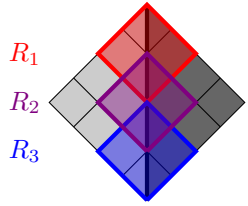

We provide bounds by cases depending on how many of the regions $R_i$ the figure $\bigcup_i \Theta^{(n)}_i$ is contained in.
\begin{itemize}
    \item When $\bigcup_i \Theta^{(n)}_i$ lies in $R_1$,\, $R_2$, or $R_3$, it follows that $\bigcup_i \Theta^{(n)}_i$ is $P$-scaleable in $A_\ell$ and can be excluded from consideration by Lemma \ref{scaleable}. 
    \item When $\bigcup_i \Theta^{(n)}_i$ belongs to either $R_1 \cup R_2$, or $R_2 \cup R_3$, and none of the previous cases, we have that $\bigcup_i \Theta^{(n)}_i \not\subset R_1 \cap R_2$ and $\bigcup_i \Theta^{(n)}_i \not\subset R_2 \cap R_3$. By symmetry, we can assume $\bigcup_i \Theta^{(n)}_i \subset R_1 \cup R_2$ and $\bigcup_i \Theta^{(n)}_i \not\subset R_1 \cap R_2$. Thus, $\bigcup_i \Theta^{(n)}_i$ must intersect $R_1\setminus R_2$,\, $R_2\setminus R_1$,\, $H_L$, and $H_R$. A quick calculation shows that $d \geq \ell (\sqrt{3} - 1) \left(\frac{1}{2}\right)^4$
\end{itemize}
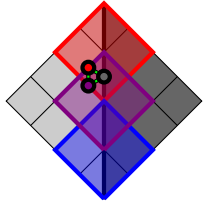
\begin{figure}[ht]
\begin{tikzpicture}[scale=1.3]
\filldraw[fill=black!20] (0,1) -- (0,-1) -- (-1,0) -- (0,1);
\filldraw[fill=black!60] (0,1) -- (0,-1) -- (1,0) -- (0,1);
\draw (-0.5,0.5) -- (0.5,-0.5);
\draw (0.5,0.5) -- (-0.5,-0.5);
\draw (-0.75,0.25) -- (0.25,-0.75);
\draw (-0.25,0.75) -- (0.75,-0.25);
\draw (-0.25,-0.75) -- (0.75,0.25);
\draw (-0.75,-0.25) -- (0.25,0.75);
\draw[ultra thick] (0,1) -- (0,-1);
\filldraw[fill=red, fill opacity = 0.4, draw=red, ultra thick] (0,0) -- (0.5,0.5) -- (0,1) -- (-0.5,0.5) -- (0,0);
\filldraw[fill=blue, fill opacity = 0.4, draw=blue, ultra thick] (0,0-1) -- (0.5,0.5-1) -- (0,1-1) -- (-0.5,0.5-1) -- (0,0-1);
\filldraw[fill=violet, fill opacity = 0.4, draw=violet, ultra thick] (0,0-0.5) -- (0.5,0.5-0.5) -- (0,1-0.5) -- (-0.5,0.5-0.5) -- (0,0-0.5);
\filldraw[draw=black, ultra thick] (-0.16,0.342) -- (-0.16,0.158);
\filldraw[draw=green, thick] (-0.16,0.342) -- (-0.16,0.158);
\filldraw[draw=black, ultra thick] (0,0.25) -- (-0.16,0.158);
\filldraw[draw=green, thick] (0,0.25) -- (-0.16,0.158);
\filldraw[draw=black, ultra thick] (-0.16,0.342) -- (0,0.25);
\filldraw[draw=green, thick] (-0.16,0.342) -- (0,0.25);
\filldraw[fill=black!60 , draw=black , ultra thick] (0,0.25) circle (1.7pt);
\filldraw[fill=red , draw=black , ultra thick] (-0.16,0.342) circle (1.7pt);
\filldraw[fill=violet , draw=black , ultra thick] (-0.16,0.158) circle (1.7pt);
\end{tikzpicture}
\caption{The constraints from $R_1 \setminus R_2$ (red, uppermost), $R_2 \setminus R_1$ (purple, lowermost), and $H_R$ (dark gray, rightmost) form an equilateral triangle with side length $\ell (\sqrt{3} - 1)(\frac{1}{2})^4$. We note that there exists a mirrored configuration of constraints by symmetry, which includes a constraint from $H_L$ instead of $H_R$.}
\end{figure}

\begin{itemize}
    \item When $\bigcup_i \Theta^{(n)}_i \subset R_1 \cup R_2 \cup R_3$ and none of the previous cases, one sees that $\bigcup_i \Theta^{(n)}_i$ must intersect $R_1\setminus R_2$ and $R_3\setminus R_2$. Then $d \geq \ell \left(\frac{1}{2}\right)^2$.
\end{itemize}
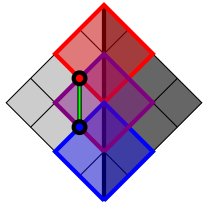
\begin{figure}[ht]
\begin{tikzpicture}[scale=1.3]
\filldraw[fill=black!20] (0,1) -- (0,-1) -- (-1,0) -- (0,1);
\filldraw[fill=black!60] (0,1) -- (0,-1) -- (1,0) -- (0,1);
\draw (-0.5,0.5) -- (0.5,-0.5);
\draw (0.5,0.5) -- (-0.5,-0.5);
\draw (-0.75,0.25) -- (0.25,-0.75);
\draw (-0.25,0.75) -- (0.75,-0.25);
\draw (-0.25,-0.75) -- (0.75,0.25);
\draw (-0.75,-0.25) -- (0.25,0.75);
\draw[ultra thick] (0,1) -- (0,-1);
\filldraw[fill=red, fill opacity = 0.4, draw=red, ultra thick] (0,0) -- (0.5,0.5) -- (0,1) -- (-0.5,0.5) -- (0,0);
\filldraw[fill=blue, fill opacity = 0.4, draw=blue, ultra thick] (0,0-1) -- (0.5,0.5-1) -- (0,1-1) -- (-0.5,0.5-1) -- (0,0-1);
\filldraw[fill=violet, fill opacity = 0.4, draw=violet, ultra thick] (0,0-0.5) -- (0.5,0.5-0.5) -- (0,1-0.5) -- (-0.5,0.5-0.5) -- (0,0-0.5);
\filldraw[draw=black, ultra thick] (-1/4,1/4) -- (-1/4,-1/4);
\filldraw[draw=green, thick] (-1/4,1/4) -- (-1/4,-1/4);
\filldraw[fill=red , draw=black , ultra thick] (-1/4,1/4) circle (1.7pt);
\filldraw[fill=blue , draw=black , ultra thick] (-1/4,-1/4) circle (1.7pt);
\end{tikzpicture}
\caption{The constraints from $R_1 \setminus R_2$ (red, uppermost) and $R_3 \setminus R_2$ (blue, lowermost), which are at distance $\ell (\frac{1}{2})^2$.  We note that there exists a mirrored configuration of constraints by symmetry, corresponding the families contained in $H_R$ instead of $H_L$.}
\end{figure}
\begin{itemize}
    \item When none of the previous cases are satisfied, we have that $\bigcup_i \Theta^{(n)}_i \not\subset R_1 \cup R_2 \cup R_3$. By symmetry we may assume $\bigcup_i \Theta^{(n)}_i$ intersects $H_L\setminus\left(\bigcup_i R_i\right)$ and $H_R$. Then $d \geq \ell \left(\frac{1}{2}\right)^3$.
\end{itemize}
\begin{figure}[ht]
\begin{tikzpicture}[scale=1.3]
\filldraw[fill=black!20] (0,1) -- (0,-1) -- (-1,0) -- (0,1);
\filldraw[fill=black!60] (0,1) -- (0,-1) -- (1,0) -- (0,1);
\draw (-0.5,0.5) -- (0.5,-0.5);
\draw (0.5,0.5) -- (-0.5,-0.5);
\draw (-0.75,0.25) -- (0.25,-0.75);
\draw (-0.25,0.75) -- (0.75,-0.25);
\draw (-0.25,-0.75) -- (0.75,0.25);
\draw (-0.75,-0.25) -- (0.25,0.75);
\draw[ultra thick] (0,1) -- (0,-1);
\filldraw[fill=red, fill opacity = 0.4, draw=red, ultra thick] (0,0) -- (0.5,0.5) -- (0,1) -- (-0.5,0.5) -- (0,0);
\filldraw[fill=blue, fill opacity = 0.4, draw=blue, ultra thick] (0,0-1) -- (0.5,0.5-1) -- (0,1-1) -- (-0.5,0.5-1) -- (0,0-1);
\filldraw[fill=violet, fill opacity = 0.4, draw=violet, ultra thick] (0,0-0.5) -- (0.5,0.5-0.5) -- (0,1-0.5) -- (-0.5,0.5-0.5) -- (0,0-0.5);
\filldraw[draw=black, ultra thick] (-1/4,1/4) -- (0,1/4);
\filldraw[draw=green, thick] (-1/4,1/4) -- (0,1/4);
\filldraw[fill=black!20 , draw=black , ultra thick] (-1/4,1/4) circle (1.7pt);
\filldraw[fill=black!60 , draw=black , ultra thick] (0,1/4) circle (1.7pt);
\end{tikzpicture}
\caption{The constraints from $H_L \setminus (\bigcup_i R_i)$ (light gray, leftmost) and $H_R$ (dark gray, rightmost), which are at distance $\ell (\frac{1}{2})^3$. As mentioned before, there is a mirrored configuration of constraints by symmetry, corresponding to $H_R \setminus (\bigcup_i R_I)$ and $H_L$.}
\end{figure}
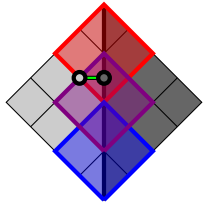
Combining all cases, we conclude that $d \geq \ell (\sqrt{3} - 1) \left(\frac{1}{2}\right)^4$, as claimed.
\end{proof}

\indent We now proceed to continue with the proof of Theorem \ref{T2}, which we divide in four main cases. These are, when $\Dcup$ intersects three, two, one, or none of the 1-cells $K^{(1)},\,K^{(2)},\,K^{(3)}$. However, before examining each case, we will note that when $\Dcup$ is contained in a 1-cell, there must exist a similarity of ratio $\frac{1}{2}$ onto $K$, making these families scaleable. By Lemma \ref{scaleable}, we will exclude these from consideration.  
\subsection*{Case 1}\label{c1}
When $\bigcup_{i = 1}^k \Delta^{(n)}_i$ intersects $K^{(1)}$, $K^{(2)}$, and $K^{(3)}$, we have that $d \geq \frac{1}{4}$. Indeed, notice there exists a projection $\pi$ onto the triangle $T$ with $d \geq |\pi(\Dcup)|$. We then provide the constraints for Case 1 in Figure \ref{fig:proj}.
\begin{figure}[ht]    
\includegraphics{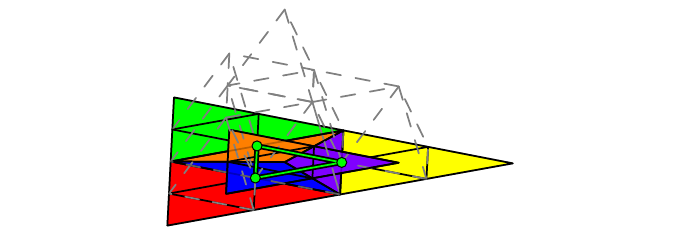}
\caption{Projection of $K$ onto $T$. By symmetry, we see that the constraints for $K^{(1)}$, $K^{(2)}$, and $K^{(3)}$ are given by $F_{31}(p_2)$, $F_{12}(p_3)$, and $F_{23}(p_1)$ respectively. These form an equilateral triangle with side length $\frac{1}{4}$.}
\label{fig:proj}
\end{figure}
By Corollary \ref{T3corPar}, $\alpha^{(1)} \geq a_n \exp{ -s_2 / 2^{n-3}}$.
    
\subsection*{Case 2}\label{c2}
Suppose $\bigcup_{i = 1}^k \Delta^{(n)}_i$ only intersects two of the base 1-cells of $K$. We will assume these are $K^{(1)}$ and $K^{(2)}$, since the other arguments follow by rotations of $\R^3$. We will provide a similar argument to Case 1, by introducing a sequence $a_n^{(2)}$. However, this case will rely on Lemma \ref{scaleable}, since $\bigcup_{i = 1}^k \Delta^{(n)}_i$ can be arbitrarily close to the point $F_1(p_2) = F_2(p_1)$ as $n \rightarrow \infty$, implying that we cannot find lower bound for $d$ unless we exclude certain scaleable families from consideration. We will show that $\alpha^{(2)} \geq a_n\exp{ -s_2 \sqrt{2}/ 2^{n-3}}$, dividing this part into two sub-cases.

\begin{figure}[h]
    \centering
    \includegraphics{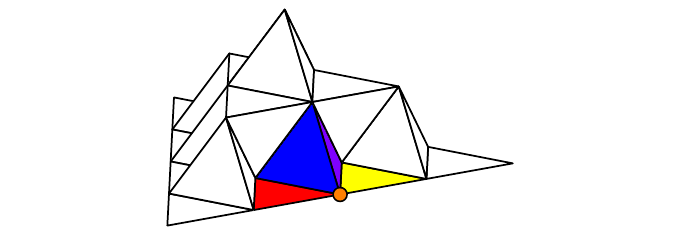}
\caption{Critical corner $R_{(2,3b)}$ for Case 2 and Subcase 3b}
\label{fig:cornercase1}
\end{figure}

\subsubsection*{Case 2a}\label{sc2.1}
If $\bigcup_{i = 1}^k \Delta^{(n)}_i$ is contained in the critical region $R_{(2,3b)} := K^{(12)} \cup K^{(43)} \cup K^{(52)} \cup K^{(21)}$, notice we can scale this corner by $2$ into $K^{(1)} \cup K^{(2)} \cup K^{(4)} \cup K^{(5)}$. By Lemma \ref{scaleable}, we may exclude this case from consideration.
\subsubsection*{Case 2b}\label{sc2.2}
Assume that $\bigcup_{i = 1}^k \Delta^{(n)}_i$ is not contained in the critical region $R_{(2)}$. By symmetry, we may suppose $\Dcup$ intersects $K^{(1)} \setminus R_{(2)}$. We then see from Figure \ref{fig:cornercase2_t3} that $d \geq \frac{\sqrt{2}}{8}$. 
\begin{figure}[h]
    \centering
    \includegraphics{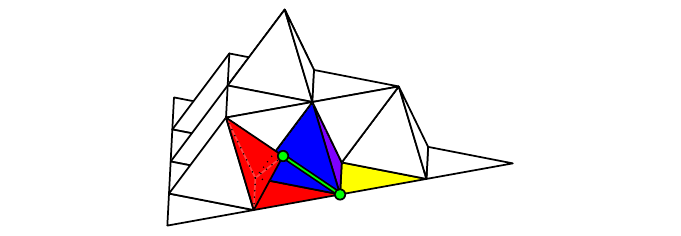}
\caption{The constraints for Case 2b corresponding to $K^{(1)} \setminus R_{(2)}$ (leftmost) and $K^{(2)}$ (rightmost) are given by $F_{51}(p_4)$ and $F_1(p_2)$ respectively, and are at distance $\frac{\sqrt{2}}{2} (\frac{1}{2})^2$.}
\label{fig:cornercase2_t3}
\end{figure}

By Corollary \ref{T3corPar}, $\alpha^{(2b)} \geq a_n^{(2b)} \exp{-s_2 / 2^{n-3}}$.
Putting $\alpha^{(2)} := \alpha^{(2b)}$ and $a_n^{(2)} := a_n^{(2b)}$, we obtain our desired sequence. Furthermore, $$\alpha^{(2)} \geq a_n^{(2)} \exp{ - s_2 \sqrt{2}/ 2^{n-3}} \geq a_n \exp{ - s_2 \sqrt{2}/ 2^{n-3}}.$$
\subsection*{Case 3}\label{c3th4}
Suppose $\bigcup_{i = 1}^k \Delta^{(n)}_i$ only intersects one of the base 1-cells of $K$. We may assume that this cell is $K^{(1)}$ by symmetry. We will subdivide this case by considering when $\Dcup$ intersects $K^{(5)}$ and $K^{(6)}$, either of these exclusively, or neither. Defining $\alpha^{(3)} := \min\{\alpha^{(3a)},\alpha^{(3b)},\alpha^{(3c)}\}$ and $a^{(3)} := \min\{a^{(3a)},a^{(3b)},a^{(3c)}\}$, we will obtain our desired sequence. Furthermore, we will show 
$$\alpha^{(3)} \geq a_n^{(3)} \exp{ -s_2 (\sqrt{2} + \sqrt{6}) / 2^{n-6}} \geq a_n \exp{ -s_2 (\sqrt{2} + \sqrt{6}) / 2^{n-6}}.$$ 
\subsubsection*{Case 3a}\label{sc3.1th4}
If $\bigcup_{i = 1}^k \Delta^{(n)}_i$ intersects both $K^{(5)}$ and $K^{(6)}$ as well, we see from the symmetries of $K$ and Figure \ref{fig:case4aT3} that $d \geq \frac{\sqrt{6}-2}{2}$.
\begin{figure}[h]
    \centering
    \includegraphics[scale=0.9]{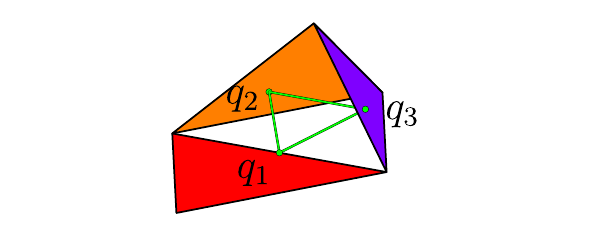}
    \caption{$K^{(1)},K^{(6)},K^{(5)}$ together with their respective constraints $q_1,q_2,q_3$, which form an equilateral triangle with side length $\frac{\sqrt{6}-2}{2}$.}
    \label{fig:case4aT3}
\end{figure}

More specifically, for $p_{i,j} := (p_i + p_j)/2$, the constraints for Case 3a seen in Figure \ref{fig:case4aT3} are given by:
\begin{align*}
q_1 &= \frac{p_{1,2} + p_{1,3}}{2},\\
q_2 &= (\sqrt{6} - 2) p_{1,3} + \frac{3 - \sqrt{6}}{2} p_{2,3} + \frac{3 - \sqrt{6}}{2}, p_4\\
q_3 &= (\sqrt{6} - 2) p_{1,2} + \frac{3 - \sqrt{6}}{2} p_{2,3} + \frac{3 - \sqrt{6}}{2} p_4.
\end{align*}
\subsubsection*{Case 3b}\label{sc3.2th4}
If $\bigcup_{i = 1}^k \Delta^{(n)}_i$ is intersects $K^{(5)}$ or $K^{(6)}$ exclusively, an argument similar to that of Case 2 holds, and we see that either $\Dcup$ is scaleable or $d \geq \frac{1}{8}$.
\begin{figure}[h]
    \centering
    \includegraphics{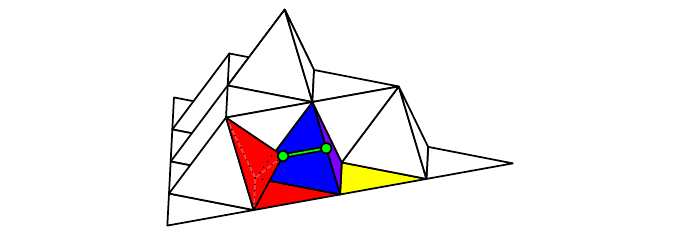}
\caption{The constraints for Case 3b corresponding to $K^{(1)} \setminus R_{(2)}$ (leftmost) and $K^{(5)}$ (rightmost) are given by $F_{51}(p_4)$ and $F_{225}(p_1)$, and are at distance $(\frac{1}{2})^3$.}
\label{fig:cornercase2}
\end{figure}
\subsubsection*{Case 3c}\label{sc3.3th4}
The difficult case arises when $\bigcup_{i = 1}^k \Delta^{(n)}_i$ is contained in $K^{(1)} \cup K^{(4)}$, since there is now a critical face $R_{(3c)} := K^{(1)} \cap K^{(4)}$ where the diameter $|\bigcup_{i = 1}^k \Delta^{(n)}_i|$ can approach $0$ as $n \rightarrow \infty$. This case must be subdivided into further sub-situations, depending on the region where $\bigcup_{i = 1}^k \Delta^{(n)}_i$ is contained.

\begin{figure}[ht]
\includegraphics{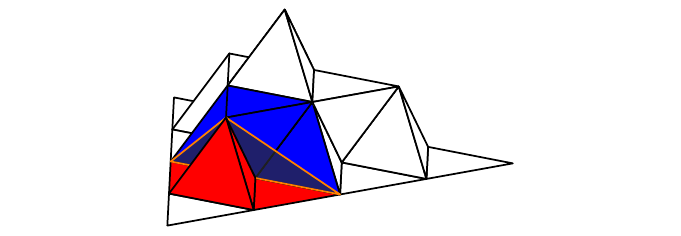}
\caption{The critical region $R_{(3c)}$ for Case 3c.} 
\label{fig:regiondiagram3.3}
\end{figure}

\noindent We first define the following regions
\begin{align*}
R_{1}' &:= K^{(321)} \cup K^{(234)} &  R_2'&:= K^{(351)} \cup K^{(234)} & R_3'&:= K^{(261)} \cup K^{(354)} & R_4'&:= K^{(231)} \cup K^{(324)}\\
    R_1 &:= K^{(21)}\cup K^{(34)} &
    R_2 &:= K^{(51)} \cup K^{(64)} &
    R_3 &:= K^{(61)} \cup K^{(54)} &
    R_4 &:= K^{(31)} \cup K^{(24)} 
\end{align*}
    $$R_5 = K^{(141)} \cup K^{(151)} \cup K^{(161)} \cup K^{(414)} \cup K^{(514)} \cup K^{(614)}$$
\begin{figure}[ht]
\includegraphics{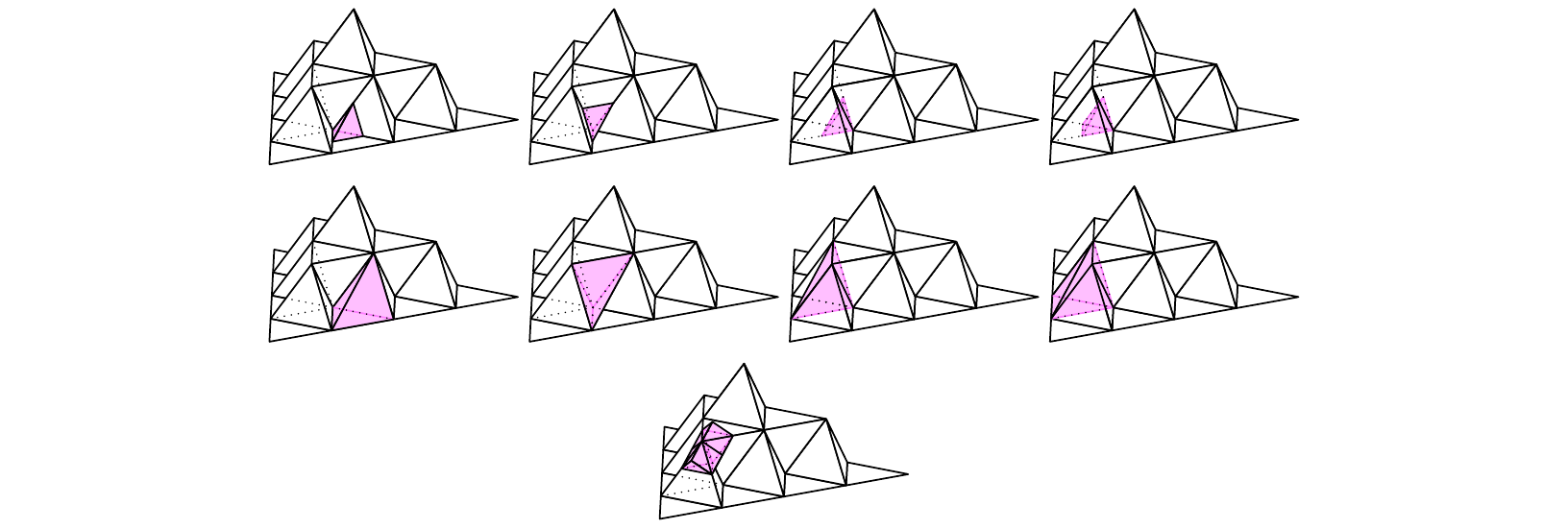}
\caption{The regions $R_1'$ through $R_4'$ (top row), $R_1$ through $R_4$ (middle row), and $R_5$ (bottom) colored in pink.}
\end{figure}
\noindent We then divide this case into the following subcases
\begin{enumerate}[label=(\roman*)]
    \item $\bigcup_i \Delta^{(n)}_i \subset R_1$, $R_2$, $R_3$, $R_4$, or $R_5$; or $\bigcup_i \Delta^{(n)}_i \subset R_1' \cup R_2' \cup R_3' \cup R_4'.$
    \item $\bigcup_i \Delta^{(n)}_i \subset R_1 \cup R_2$ or $R_3 \cup R_4$ and none of the previous cases
    \item $\bigcup_i \Delta^{(n)}_i \subset R_2 \cup R_3$ and none of the previous cases
    \item $\bigcup_i \Delta_i^{(n)} \subset R_1 \cup R_2 \cup R_3 \cup R_4$ and none of the previous cases
    \item None of the previous cases
\end{enumerate}
\medskip
\noindent \textit{Case 3c.i.} In either scenario, notice there exist similarities into $R_1 \cup R_4$ respectively of ratio $2$. By Lemma \ref{scaleable}, we may exclude this case from consideration.\medskip 

\noindent \textit{Case 3c.ii.} We now have that $\bigcup\Delta_i^{(n)} \not\subset R_1' \cup R_2' \cup R_3' \cup R_4'$. As we are trying to minimize diameter, we may suppose $\Dcup$ is contained in $R_1 \cup R_2$ or $R_3 \cup R_4$ exclusively. Furthermore, we may suppose each $\Delta_i^{(n)}$ intersects one of the two squares with diagonal $\ell = \sqrt{2} \left(\frac{1}{2}\right)^3$ in Figure \ref{fig:twosquares}. By Lemma \ref{square}, we obtain $d \geq \sqrt{2} (\sqrt{3} - 1) \left(\frac{1}{2}\right)^7$.

\begin{figure}[ht]
\includegraphics{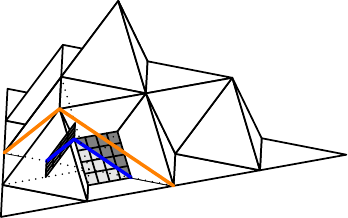}
\caption{The critical region $R_{(3c)}$ and the two squares, with diagonals of length $\sqrt{2} (\frac{1}{2})^3$ contained in the critical region, one half contained in $K^{(1)}$ and the other in $K^{(4)}$. The frontmost square is given by $(R_1 \cup R_2) \cap (R_1' \cup R_2')$ and the backmost square is given by $(R_3 \cup R_4) \cap (R_3' \cup R_4')$.}
\label{fig:twosquares}
\end{figure}
\noindent \textit{Case 3c.iii.} As we are trying to minimize diameter, we may suppose each $\Delta^{(n)}_i$ intersects the square with diameter $\ell = (\frac{1}{2})^2$ seen in Figure \ref{fig:3c.iii.square}. By Lemma \ref{square}, we obtain $d \geq (\sqrt{3} - 1) \left(\frac{1}{2}\right)^6$.

\begin{figure}[ht]
\includegraphics{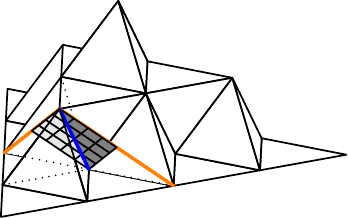}
\caption{The critical region $R_{(3c)}$ and the square contained in the critical region, given by $(R_2' \cup R_3') \cap R_{(3c)}$, with diagonal of length $(\frac{1}{2})^2$.}
\label{fig:3c.iii.square}
\end{figure}

\noindent\textit{Case 3c.iv.} We may assume that $\Dcup$ intersects $R_1$ and $R_3$, or $R_2$ and $R_4$ by exclusion of previous cases. By symmetry we may suppose $\Dcup$ intersects $R_1$ and $R_3$. From Figure \ref{fig:Case3c.iv.} we conclude $d \geq \sqrt{2} \left(\frac{1}{2}\right)^4$. 

\begin{figure}[ht]
\includegraphics[scale=1.35]{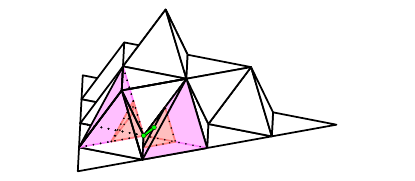}
\caption{The regions $R_1'$ and $R_3'$ (red, innermost), $R_1 \setminus R_1'$ and $R_3 \setminus R_3'$ (pink, outermost), and the constraints for Case 3c.iv.\! corresponding to $R_3$ (backmost) and $R_1 \setminus R_1'$ (frontmost), which are at distance $\sqrt{2} (\frac{1}{2})^4$. More specifically, these constraints are given by $F_{31}(p_2)$ and $F_{51}(p_4)$ respectively. There exists a mirrored configuration of constraints by symmetry, corresponding to $R_1$ and $R_3 \setminus R_3'$.}
\label{fig:Case3c.iv.}
\end{figure}

\noindent \textit{Case 3c.v.} When $\Dcup \not\subset \bigcup_{i=1}^5 R_i$, a simple calculation yields $d \geq \sqrt{2} \left(\frac{1}{2}\right)^4$. 
\begin{figure}[ht]
\includegraphics[scale=1.35]{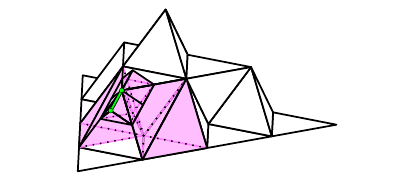}
\caption{The region $\bigcup_{i=1}^5 R_i$ colored in pink, and the constraints for Case 3c.v.\! corresponding to $R_1 \setminus (\bigcup_{i=1}^5 R_i)$ (lowermost) and $R_4$ (uppermost), which are at distance $\sqrt{2} \left(\frac{1}{2}\right)^4$. More specifically, these constraints are given by  $F_{141}(p_4)$ and $F_{4}(p_4)$ respectively. There exists a mirrored configuration of constraints by symmetry, corresponding to $R_4 \setminus (\bigcup_{i=1}^5 R_i)$ and $R_1$.}
\end{figure}

As in Case 2, we apply Corollary \ref{T3corPar} and obtain
$$\alpha^{(3)} = \alpha^{(3c)} = \alpha^{(3c.ii)} \geq a^{(3)}_n \exp{-s_2 (\sqrt{2} + \sqrt{6}) / 2^{n-6}} \geq a_n \exp{-s_2 (\sqrt{2} + \sqrt{6}) / 2^{n-6} }.$$
\subsection*{Case 4}\label{c4th4}
Suppose that $\bigcup_{i = 1}^k \Delta^{(n)}_i$ does not intersect any of the base 1-cells. We will subdivide this case by considering when $\Dcup$ intersects three or two of the 1-cells $K^{(4)}$, $K^{(5)}$, and $K^{(6)}$. Defining $\alpha^{(4)} := \min\{\alpha^{(4a)},\alpha^{(4b)}\}$ and $a^{(4)} := \min\{a^{(4a)},a^{(4b)}\}$, we obtain our desired sequence. Furthermore, $\alpha^{(4)} \geq a_n^{(4)} \exp{ -s_2 (1 + \sqrt{3}) / 2^{n-5}} \geq a_n \exp{ -s_2 (1 + \sqrt{3}) / 2^{n-5}}$. 
\subsubsection*{Case 4a}\label{sc4.1th4}
Consider when $\bigcup_{i = 1}^k \Delta^{(n)}_i$ intersects $K^{(4)},K^{(5)},K^{(6)}$. For this subcase, the critical region is the uppermost corner $R_{(4a)} := K^{(41)} \cup K^{(51)} \cup K^{(61)}$. As usual, if $\bigcup_{i = 1}^k \Delta^{(n)}_i \subseteq R_{(4a)}$, there exists a similarity into $R_4 \cup R_5 \cup R_6$ of ratio $\frac{1}{2}$, making the family case (4a)-scaleable. If $\bigcup_{i = 1}^k \Delta^{(n)}_i$ is not contained in the upper corner $R_{(4a)}$, we see from the symmetries of $K$ and Figure \ref{fig:Case4a_T3} that $d \geq \frac{\sqrt{6}-2}{4}$. This is identical to the argument in Case 3a.

\begin{figure}[ht]
\includegraphics{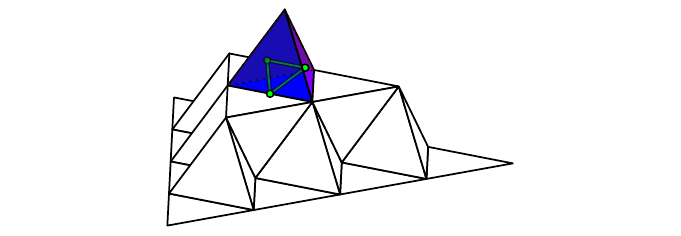}
\caption{The critical region $R_{(4a)}$ for Case 4a in color and, through $K^{(14)}$ (in blue), we find the constraints corresponding to $K^{(4)} \setminus R_{(4a)}$, $K^{(5)}$, and $K^{(6)}$. These form a triangle similar, but of scale $\frac{1}{2}$, to that formed by the constraints in Case 3a.}
\label{fig:Case4a_T3}
\end{figure}
    

\subsubsection*{Case 4b}\label{sc4.2th4}
Consider when $\bigcup_{i = 1}^k \Delta^{(n)}_i$ intersects only two of $K^{(4)},K^{(5)},K^{(6)}$. By symmetry, one can assume these are $K^{(4)}$ and $K^{(5)}$. Note that there is now a critical edge $L$ between the points $p_4$ and $F_1(p_2) = F_2(p_1)$, which is of length $\ell = \frac{1}{2}$. As we are trying to minimizing diameter, we may suppose each $\Delta_i^{(n)}$ intersects the square with diagonal $L$ seen in Figure \ref{fig:sc4.2}. An application of Lemma \ref{square} gives $d \geq (\sqrt{3} - 1) \left(\frac{1}{2}\right)^5$. 

\begin{figure}[h]
\centering
    \includegraphics{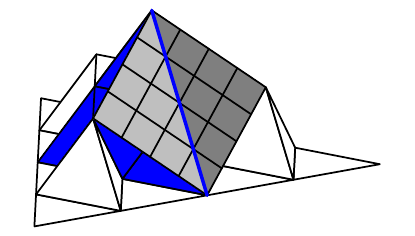}

\caption{The critical region $L$ for Case 4b depicted by a bold line (in blue), and the square with vertices $p_4$, $F_1(p_4)$, $F_2(p_1)$, and $F_2(p_4)$, with diagonal $L$ of length $\frac{1}{2}$, one side contained $K^{(4)}$ and the other in $K^{(5)}$.}
\label{fig:sc4.2}
\end{figure}


We then see that
$$\displaystyle \alpha^{(4)} = \alpha^{(4b)} \geq a_n^{(4b)} \exp{ -s_2 (1 + \sqrt{3}) / 2^{n-5}} \geq a_n \exp{ -s_2 (1 + \sqrt{3}) / 2^{n-5}}.$$ 

Henceforth, combining all the above cases, we obtain
\begin{align*}
\hh^s(K) &= \lim_{n \rightarrow \infty} a_n = \lim_{n \rightarrow \infty} \min \{a_n^{(1)},a_n^{(2)},a_n^{(3)},a_n^{(4)}\} = \min \{ \alpha^{(1)},\alpha^{(2)},\alpha^{(3)},\alpha^{(4)}\}\\
&\geq a_n \min \left\{\exp{-\frac{s_2}{ 2^{n-3}}}, \exp{-\frac{s_2 \sqrt{2}}{ 2^{n-3}}}, \exp{- \frac{s_2 (\sqrt{2}+\sqrt{6})}{ 2^{n-6}}}, \exp{- \frac{s_2 (1+\sqrt{3})}{ 2^{n-5}}}\right\} \\
&= a_n \exp{-\frac{s_2 (\sqrt{2}+\sqrt{6})}{2^{n-6}}}.
\end{align*}
Combining the above inequality with Proposition \ref{p2}, we are led to the inequality (\ref{T2est}), completing the proof.

\section{Proof of Theorem 4}\label{sec6}
This demonstration will be akin to that of the proof of Theorem 3. Let $N > 2$ be such that $N \not\equiv 0$ (mod 3) and let $\{ \Delta_i^{(n)} \}_i$ be a collection of $n$-cells of $K_N$ with diameter $d := |\bigcup_{i=1}^k \Delta_i^{(n)}|$. For notational simplicity, we will denote $K := K_N$, the Koch $N$-surface. We will also organize this proof by cases, based on how many of the bottom 1-cells tangent at an edge to the peak of $K_N$ the family $\bigcup_{i = 1}^k \Delta^{(n)}_i$ intersects. In order to make this procedure as similar as possible to what was done for $K_2$, we will choose to denote the 1-cells adjacent to the ``peak'' by $K^{(1)},K^{(2)},K^{(3)}$ and those 1-cells making up the ``peak'' by $K^{(4)},K^{(5)},K^{(6)}$ (see Figure \ref{fig:kochsurf4n5second}). 

\begin{figure}[ht]
    \centering
    \includegraphics[scale=.35]{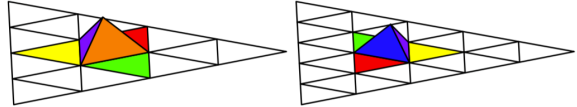}
    \caption{Koch 4-surface and 5-surface prefractals, first iteration (left and right resp.) The 1-cells $K^{(1)}$ through $K^{(6)}$ are colored in red, yellow, green, blue, purple, orange respectively (in the left diagram, $K^{(4)}$ is not visible; while in the right, $K^{(6)}$ is not visible). Notice $K^{(1)},K^{(2)},K^{(3)}$ are contained in the base triangle $T$, while $K^{(4)}, K^{(5)}, K^{(6)}$ are not as they consist of the ``peak''. Furthermore, we note that $K^{(1)}$ and $K^{(4)}$, $K^{(2)}$ and $K^{(5)}$, $K^{(3)}$ and $K^{(6)}$ each intersect at an edge.}
    \label{fig:kochsurf4n5second}
\end{figure}

Due to the difficulty in naming most of the 1-cells in $K_N$, we will depend strongly on diagrams throughout the proof, without providing formulas for the constraints these diagrams illustrate. We will again represent such constraints by green dots and graph the segments between these collections of points. The first instance of such diagrams is found in the following technical lemma, which will play an analogous role to that of Lemma 2 and will be used in Cases 3c, 4b, 4d.

\begin{lemma}\label{flap}
Consider two Koch $N$-surfaces of scale $1/N$ intersecting at a base edge of length $1/N$ forming a dihedral angle of $\theta = \arccos(1/3), 0, \arccos(1/3) - \pi$. By a base edge of the Koch $N$-surface $K$, we mean one of the three sides of length 1 in the triangle $T$.
 
\begin{figure}[ht]
    \centering
    \includegraphics[scale=0.35]{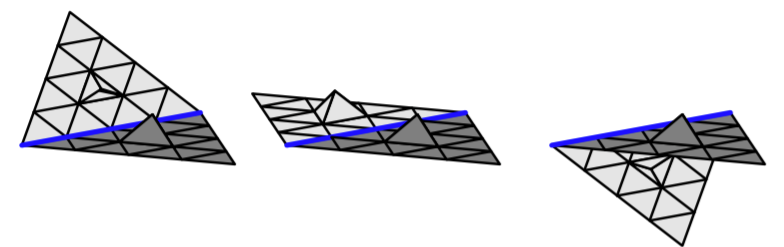}
    \label{fig:flaps}
    \caption{Two first iteration prefractals of the Koch $4$-surface, intersecting at a base edge, forming a dihedral angle of $\arccos(1/3)$ (left), $0$ (middle), and $\arccos(1/3) - \pi$. These specific angles arise in Cases 3c, 4d, and 4b respectively.}
\end{figure}
Then, if $\bigcup_{i=1}^k \Delta_i^{(n)}$ is a family of $n$-cells that intersects both Koch $N$-surfaces, it follows that $\Dset$ is scaleable or
$$d := \abs{\bigcup_{i=1}^k \Delta_i^{(n)}} \geq \frac{\sqrt{6}}{3N^3}.$$
\end{lemma}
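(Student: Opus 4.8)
The plan is to mirror the strategy of Lemma \ref{square}, replacing the planar square critical region there by the genuinely three-dimensional \emph{flap} formed by the two glued scale-$1/N$ surfaces, and to run a case analysis on how $\bigcup_i \Delta_i^{(n)}$ sits relative to a fixed collection of self-similar subregions straddling the common edge. First I would fix coordinates placing the shared base edge along a line $E$ and writing the two surfaces as lying on half-planes meeting $E$ at the prescribed dihedral angle $\theta \in \{\arccos(1/3),\,0,\,\arccos(1/3)-\pi\}$. Since the two surfaces meet only along $E$ while $\bigcup_i \Delta_i^{(n)}$ meets both, the union must contain points on either side of $E$, so $d$ is governed by how far from $E$ those points are forced to lie. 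Exactly as in Lemma \ref{square}, the obstruction to a uniform lower bound is that cells abutting $E$ shrink to $0$; the remedy is to carve the neighborhood of $E$ into finitely many subregions, each a scale-$1/N$ copy of the local picture, so that any family contained in a single such subregion is $P$-scaleable and may be discarded via Lemma \ref{scaleable}.

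For the surviving families, namely those straddling at least two subregions, I would read off the extremal \emph{constraint} points that such a family is forced to contain, one per region it must meet, in the spirit of the diagrams accompanying Lemma \ref{square} and Cases 1--4 of Theorem \ref{T2}. The diameter $d$ is then bounded below by the minimal pairwise distance among these constraints, and the decisive quantity is the distance between a constraint on one surface and a constraint on the other. With the coordinates above, if two such points sit at in-plane perpendicular distance $a$ from $E$ at a common position along $E$, their spatial separation is $2a\sin(\theta/2)$. I expect the deepest admissible non-scaleable constraints to force a perpendicular offset $a = \tfrac{1}{\sqrt2\,N^3}$; substituting the tetrahedral angle $\theta=\arccos(1/3)$, for which $\sin(\theta/2)=1/\sqrt3$, then yields $2a\sin(\theta/2) = \tfrac{\sqrt6}{3N^3}$, which one recognizes as the height of a regular tetrahedron of edge length $1/N^3$.

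It then remains to check that the other two angles are no smaller. The reflex angle $\theta = \arccos(1/3)-\pi$ gives $|\sin(\theta/2)| = \cos\!\big(\tfrac12\arccos(1/3)\big) = \sqrt{2/3}$, hence a separation $2a\sqrt{2/3} = \tfrac{2\sqrt6}{3}a$, strictly larger than $\tfrac{2\sqrt3}{3}a$ at equal constraint depth; while the flat configuration $\theta = 0$ collapses the two half-planes into one, so the estimate becomes a purely in-plane one of the type already settled in Lemma \ref{square}, which again dominates $\tfrac{\sqrt6}{3N^3}$. Combining the three cases yields the claimed bound for all admissible $N>2$.

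The main obstacle will be organizational rather than conceptual: producing a partition of the edge-neighborhood into self-similar subregions that is simultaneously valid for \emph{every} admissible $N>2$, since the local arrangement of cells along a base edge of $K_N$ varies with $N$, and then verifying that the resulting case list is exhaustive, so that no non-scaleable family with diameter below $\tfrac{\sqrt6}{3N^3}$ escapes. A secondary subtlety, inherited from the three-dimensionality absent in the $N=2$ analysis, is justifying the constraint-point reduction in space: one must confirm that the true minimizing families converge as $n\to\infty$ to the finite point configurations whose diameters are computed, which is precisely what delivers a lower bound independent of $n$.
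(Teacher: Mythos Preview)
Your overall architecture—partition a neighborhood of the common edge into self-similar subregions, discard families contained in a single subregion as scaleable, and extract constraint points for the survivors—is exactly what the paper does. The paper covers the edge $L$ by $N$ scale-$1/N$ copies $S_1,\dots,S_N$ of the whole flap and additionally by $N-1$ hexagonal regions $H_1,\dots,H_{N-1}$ of scale $1/N^2$ sitting at the junctions $S_k\cap S_{k+1}$; the in-region case analysis (contained in some $S_k$; contained in some $H_k$; contained in $S_k\cup H_k\cup S_{k+1}$; spanning more) is carried out \emph{independently of $\theta$} and already yields $d\ge \sqrt{3}/(2N^3)$.

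Where your proposal goes wrong is the specific constraint model. You posit a symmetric configuration—two points at equal perpendicular offset $a=1/(\sqrt{2}N^3)$ from $E$, one on each surface—and compute the cross-edge distance $2a\sin(\theta/2)$. This forces you to conclude that the acute angle $\theta=\arccos(1/3)$ is extremal. The paper's analysis shows the opposite: the worst case is the reflex angle $\theta=\arccos(1/3)-\pi$, giving $\sqrt{6}/(3N^3)$, while the acute and flat angles give the larger bound $\sqrt{3}/(2N^3)$. The reason is that the minimizing constraint configuration is \emph{not} symmetric and \emph{does} depend on $\theta$: for $\theta=\arccos(1/3)$ and $\theta=0$ one constraint lies \emph{on} the edge $L$ (at the point $S_k\cap S_{k+1}$) and the other at the midpoint of an edge of $H_k$, whereas for the reflex angle the two constraints sit one ``directly above'' the other across the fold. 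Your bound $2a\sqrt{2/3}=2\sqrt{3}/(3N^3)$ for the reflex angle is therefore too strong and would not survive: there are non-scaleable families in that configuration with diameter approaching $\sqrt{6}/(3N^3)$, below your claimed value.

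So the gap is not the decomposition scheme but the constraint identification: you must let the extremal point configuration vary with $\theta$ rather than fixing a single symmetric ansatz, and you should expect the angle-dependence to enter only in the ``outside $R$'' branch of the case analysis.
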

\begin{proof}
Note that we can cover the critical edge $L$ where the two Koch $N$-surfaces by $N$ self-similar copies
of scale $1/N$ of the whole figure, see yellow-shaded regions in Figure \ref{fig:lemma3square}. We can further cover the points where adjacent yellow-shaded regions meet by $N-1$ hexagonal regions 
of scale $1/N^2$, see red-shaded regions in Figure \ref{fig:lemma3square}.
\begin{figure}[ht]
\centering
\includegraphics[scale=0.35]{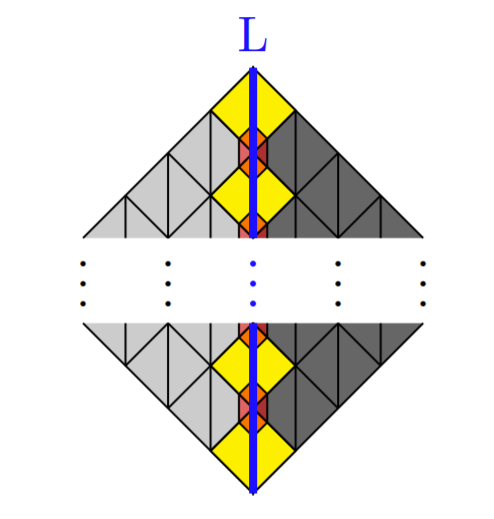}
\caption{Planar representation of two Koch $N$-surfaces meeting at an edge $L$}
\label{fig:lemma3square}
\end{figure}
Let $R$ be the region containing $L$ consising of these $N$ self-similar copies of the whole figure together with the $N-1$ hexagonal regions. We will denote these self-similar copies by $S_1,\hdots,S_N$ and the hexagonal regions by $H_1,\hdots,H_{N-1}$ such that $S_k$ and $S_{k+1}$ intersect at a point which is contained in $H_k$ for each $1 \leq k \leq N-1$.

\begin{figure}[ht]
\begin{tikzpicture}
        \filldraw[fill=yellow]   (1,-0.25) -- (0.75,0) -- (1,0.25) --  (1.25,0) -- cycle;
        \draw[blue, ultra thick] (1.25,0) -- (0.75,0);
\end{tikzpicture}
\hspace{20pt}
\begin{tikzpicture}[scale=1.3]
        \filldraw[fill=yellow]   (1/12+3/4,-1/12) -- (0.75,0) -- (1/12+3/4,1/12) --  (1/6+3/4,0) -- cycle;
        \filldraw[fill=yellow]   (-1/12+3/4,-1/12) -- (0.75,0) -- (-1/12+3/4,1/12) --  (-1/6+3/4,0) -- cycle;
        \filldraw[fill=red,fill opacity=0.5] (1/6+3/4,0) -- (1/12+3/4,1/12) -- (-1/12+3/4,1/12) -- (-1/6+3/4,0) -- (-1/12+3/4,-1/12) -- (1/12+3/4,-1/12) -- cycle;
        \draw[blue, ultra thick] (1/6+3/4,0) -- (-1/6+3/4,0);
\end{tikzpicture}
\hspace{20pt}
\begin{tikzpicture}
    \filldraw[fill=yellow]   (1,-0.25) -- (0.75,0) -- (1,0.25) --  (1.25,0) -- cycle;
    \filldraw[fill=yellow]   (0.5,-0.25) -- (0.25,0) -- (0.5,0.25) --  (0.75,0) -- cycle;
    \filldraw[fill=yellow]   (-1,-0.25) -- (-0.75,0) -- (-1,0.25) --  (-1.25,0) -- cycle;
    \filldraw[fill=yellow]   (-0.5,-0.25) -- (-0.25,0) -- (-0.5,0.25) --  (-0.75,0) -- cycle;
    \filldraw[fill=red,fill opacity=0.5] (1/4,-1/12) -- (1/12+1/4,-1/12) -- (1/6+1/4,0) -- (1/12+1/4,1/12) -- (1/4,1/12);
    \filldraw[fill=red,fill opacity=0.5] (1/6+3/4,0) -- (1/12+3/4,1/12) -- (-1/12+3/4,1/12) -- (-1/6+3/4,0) -- (-1/12+3/4,-1/12) -- (1/12+3/4,-1/12) -- cycle;
    \filldraw[fill=red,fill opacity=0.5] (-1/4,-1/12) -- (-1/12-1/4,-1/12) -- (-1/6-1/4,0) -- (-1/12-1/4,1/12) -- (-1/4,1/12);
    \filldraw[fill=red,fill opacity=0.5] (-1/6-3/4,0) -- (-1/12-3/4,1/12) -- (1/12-3/4,1/12) -- (1/6-3/4,0) -- (1/12-3/4,-1/12) -- (-1/12-3/4,-1/12) -- cycle;
    \draw[blue, ultra thick] (1.25,0) -- (0.25,0);
    \node at (0,0) {\textcolor{blue}{$\hdots$}};
    \draw[blue, ultra thick] (-0.25,0) -- (-1.25,0);
    \node at (1.45,0) {\textcolor{blue}{L}};
\end{tikzpicture}
\caption{Planar representation of a self-similar copy $S_k$ of scale $1/N$ (left), an hexagonal region $H_k$ of scale $1/N^2$ (middle), and the critical region $R$ (right)}
\end{figure}
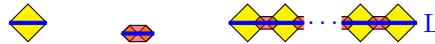

We first consider when $\bigcup_i \Delta_i^{(n)} \subseteq R$.
\begin{enumerate}
    \item If $\bigcup_i \Delta_i^{(n)}$ is contained some $S_k$, $\bigcup_i \Delta_i^{(n)}$ is scaleable.
    \item If not, suppose $\bigcup_i \Delta_i^{(n)}$ is contained in $S_k \cup H_k \cup S_{k+1}$ for some $1 \leq k \leq N-1$.
    \begin{enumerate}
        \item If $\bigcup_i \Delta_i^{(n)} \subset $
        is contained in $H_k$, then $\bigcup_i \Delta_i^{(n)}$ is scaleable.
        \item Otherwise, we see in Figure \ref{fig:twoyellowsquares} that $d \geq \sqrt{3}/2N^3$ regardless of $\theta$.
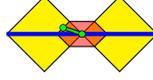
\begin{figure}[ht]
\begin{tikzpicture}[scale=2]
            \filldraw[fill=yellow]   (1,-0.25) -- (0.75,0) -- (1,0.25) --  (1.25,0) -- cycle;
            \filldraw[fill=yellow]   (0.5,-0.25) -- (0.25,0) -- (0.5,0.25) --  (0.75,0) -- cycle;
            \filldraw[fill=red,fill opacity=0.5] (1/6+3/4,0) -- (1/12+3/4,1/12) -- (-1/12+3/4,1/12) -- (-1/6+3/4,0) -- (-1/12+3/4,-1/12) -- (1/12+3/4,-1/12) -- cycle;
            \draw[blue, ultra thick] (1.25,0) -- (0.25,0);
            \draw[ultra thick] (3/4,0) -- (-1/24+3/8-1/12+3/8,1/24) ;
            \draw[green, thick] (3/4,0) -- (-1/24+3/8-1/12+3/8,1/24);
            \filldraw[black] (3/4,0) circle (.6pt);
            \filldraw[black] (-1/24+3/8,1/24) + (-1/12+3/8,0) circle (.6pt);
            \filldraw[green] (3/4,0) circle (0.45pt);
            \filldraw[green] (-1/24+3/8,1/24) + (-1/12+3/8,0) circle (0.45pt);
\end{tikzpicture}

    \caption{Planar representation of the region $S_k \cup H_k \cup S_{k+1}$ for some $1 \leq k \leq N-1$, with two constraints corresponding $S_k \setminus H_k$ (leftmost) and $S_{k+1}$ (center). More specifically, the first constraint is the midpoint of one of the edges of $H_k$ which is contained in $S_k$. The second constraint is the unique point in the intersection $S_k \cap S_{k+1}$. These are at a distance of $\frac{\sqrt{3}}{2}(\frac{1}{N})^3$. An important fact to note during this computation is that, due to the planar representation, the four yellow triangles in this image are skewed when they are in fact equilateral. By symmetry, there exists a three similar configurations of constraints per value of $k$.}
\label{fig:twoyellowsquares}
\end{figure}
    \end{enumerate}
    \item If neither (1) or (2) hold, we see in Figure \ref{fig:threeyellowsquares} that $d \geq 1/N^2$ regardless of $\theta$.
\begin{figure}[ht]
\begin{tikzpicture}[baseline={([yshift=-.5ex]current bounding box.center)}]
        \filldraw[fill=yellow]   (0,-0.25) -- (-0.25,0) -- (0,0.25) --  (0.25,0) -- cycle;
        \filldraw[fill=yellow]   (-1,-0.25) -- (-0.75,0) -- (-1,0.25) --  (-1.25,0) -- cycle;
        \filldraw[fill=yellow]   (-0.5,-0.25) -- (-0.25,0) -- (-0.5,0.25) --  (-0.75,0) -- cycle;
        \filldraw[fill=red,fill opacity=0.5] (-0.25,-1/12) -- (1/12-0.25,-1/12) -- (1/6-0.25,0) -- (1/12-0.25,1/12) -- (-0.25,1/12);
        \filldraw[fill=red,fill opacity=0.5] (-1/4,-1/12) -- (-1/12-1/4,-1/12) -- (-1/6-1/4,0) -- (-1/12-1/4,1/12) -- (-1/4,1/12);
        \filldraw[fill=red,fill opacity=0.5] (-1/6-3/4,0) -- (-1/12-3/4,1/12) -- (1/12-3/4,1/12) -- (1/6-3/4,0) -- (1/12-3/4,-1/12) -- (-1/12-3/4,-1/12) -- cycle;
        \draw[blue, ultra thick] (0.25,0) -- (-1.25,0);
        \draw[ultra thick] (-0.25,0) -- (-0.75,0);
        \draw[green,thick] (-0.25,0) -- (-0.75,0);
        \filldraw[black] (-0.25,0) circle (1.3pt);
        \filldraw[black] (-0.75,0) circle (1.3pt);
        \filldraw[green] (-0.25,0) circle (1pt);
        \filldraw[green] (-0.75,0) circle (1pt);
    \end{tikzpicture}
    \caption{Planar representation of the region $S_k \cup H_k \cup S_{k+1} \cup H_{k+1} \cup S_{k+2}$ for some $1 \leq k \leq N-2$, with two constraints corresponding to $S_k$ and $S_{k+2}$. These are given by the unique points at the intersections $S_k \cap S_{k+1}$ and $S_{k+1} \cap S_{k+2}$ respectively, which are at a distance of $(\frac{1}{2})^3$.}
\label{fig:threeyellowsquares}
\end{figure}
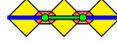
\end{enumerate}

Now we consider $\bigcup_i \Delta_i^{(n)} \not\subseteq R$.

\begin{enumerate}
    \item When $\theta = \arccos(1/3)$, we see in Figure \ref{fig:lemma3a} that $d \geq \sqrt{3}/2N^3$. 
\begin{figure}[ht]
\includegraphics[scale=1.75]{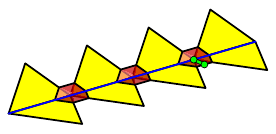}
\caption{The critical region $R$ when $\theta = \arccos(1/3)$ and $N = 4$, with two constraints corresponding to one of the two Koch $N$-surfaces (on the edge $L$) and the other Koch $N$-surface with the critical region $R$ removed (frontmost, slightly to the right). In particular, the first constraint is the unique point in $S_k \cap S_{k+1}$ for some $1 \leq k \leq N-1$. The second constraint is given by the midpoint of one of the edges of $H_k$ which are not contained in $S_k$ nor $S_{k+1}$. Due to symmetry, there are $2(N-1)$ similar configurations of constraints (in this image $N = 4$, so 6 total), two configurations per hexagon.}
\label{fig:lemma3a}
\end{figure}    

    \item When $\theta = 0$, we see in Figure \ref{fig:lemma3b} that $d \geq \sqrt{3}/2N^3$.
\begin{figure}[ht]
\includegraphics[scale=1.75]{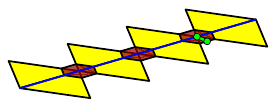}
\caption{The critical region $R$ when $\theta = 0$ and $N = 4$, with two constraints corresponding to one of the two Koch $N$-surfaces (on the edge $L$) and the other Koch $N$-surface with the critical region $R$ removed (frontmost, slightly to the right). Despite the change in geometry, these constraints are identical to those found in Figure \ref{fig:lemma3b}. Here there are also $2(N-1)$ similar configurations of constraints.}
\label{fig:lemma3b}
\end{figure}    

    \item When $\theta = \arccos(1/3) - \pi$, we see in Figure \ref{fig:lemma3c} that $d \geq \sqrt{6}/3N^3$. \qedhere
\begin{figure}[ht]
\includegraphics[scale=1.75]{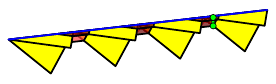}
\caption{The critical region $R$ when $\theta = \arccos(1/3) - \pi$ and $N = 4$, with two constraints corresponding to one of the two Koch $N$-surfaces (uppermost) and the other Koch $N$-surface with the critical region $R$ removed (lowermost). In particular, the second constraint is given by a point on one of the two edges of $H_k$ which are not contained in $S_k$ nor $S_{k+1}$ for some $1 \leq k \leq N-1$. The first constraint is then given by the point in $H_k$ directly above the second constraint. By translation, there exist infinitely many similar configurations of constraints.}
\label{fig:lemma3c}
\end{figure} 
\end{enumerate}
\end{proof}

We now proceed to complete the body of the proof of Theorem \ref{generalmainthm}, which again is divided into four main cases. As before, we will note that when $\Dcup$ is contained in a 1-cell, there exists a similarity of ratio $\frac{1}{N}$ onto $K$, making these families scaleable. 
By Lemma \ref{scaleable}, we will exclude these from consideration. 

\subsection*{Case 1}\label{c1th4}
When $\bigcup_{i = 1}^k \Delta^{(n)}_i$ intersects all of the distinguished base 1-cells
$K^{(1)}$, $K^{(2)}$, and $K^{(3)}$, we have that $d \geq \frac{1}{2N}$. Indeed, notice that there exists a projection $\pi$ onto the triangle $T$ with $d \geq |\pi(\Dcup)|$. As in Case 1 of Theorem 3, we use symmetry to find the three constraints as in Figure \ref{fig:Case_1_T4}, yielding $d \geq \frac{1}{2N}$. By Corollary \ref{T3corPar}, $\alpha^{(1)} \geq a_n \exp{-\frac{4s_N}{N^{n-1}}}$.
\begin{figure}[ht]
    \centering
    \includegraphics{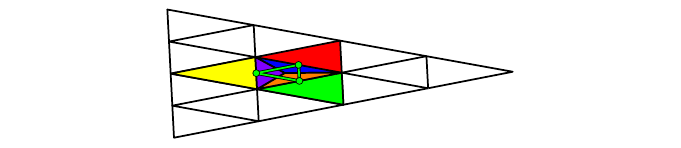}
    \caption{Projection of $K$ onto $T$ (when $N = 4$, first prefractal) with three constraints corresponding to $K^{(1)}$, $K^{(2)}$, $K^{(3)}$. These form an equilateral triangle with side length $\frac{1}{2N}$.}
    \label{fig:Case_1_T4}
\end{figure}
    
\subsection*{Case 2}\label{c2th4} 
Suppose $\bigcup_{i = 1}^k \Delta^{(n)}_i$ only intersects two of the distinguished base 1-cells of $K$. We will assume these are $K^{(1)}$ and $K^{(2)}$, since the other arguments follow by rotations of $\R^3$. We will provide a similar argument to Case 1, by introducing a sequence $a_n^{(2)}$. However, this case will rely on Lemma \ref{scaleable}, since $\bigcup_{i = 1}^k \Delta^{(n)}_i$ can be arbitrarily close to the point $F_1(p_2) = F_2(p_1)$ as $n \rightarrow \infty$, implying that we cannot find lower bound for $d$ unless we exclude Case $2$-scaleable families from consideration. We will show that $\alpha^{(2)} \geq a_n\exp{-\frac{4\sqrt{3}\,s_N}{3N^{n-2}}}$ divide this part into two sub-cases.

\begin{figure}[ht]
\centering
\includegraphics{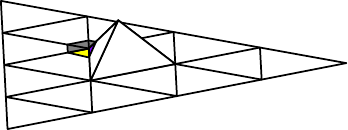}
\includegraphics{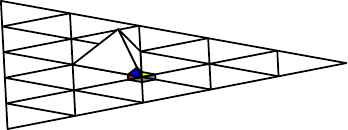}
\caption{Critical region $R_{(2,3b,4c)}$ for Cases 2, 3b, and 4c (in color), when $N = 4$ (left) and $N = 5$ (right)}
\label{fig:CornerCase2}
\end{figure}

\begin{figure}
    \centering
\includegraphics[scale=2.5]{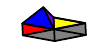}
\hspace{20pt}
\includegraphics{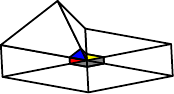}
    \caption{Critical corner $R_{(2,3b,4c)}$ for Cases 2, 3b, 4c: zoomed in (left), in $R'_{(2,3b,4c)}$ (right)}
    \label{fig:my_label}
\end{figure}

\subsubsection*{Case 2a}\label{sc2.1th4}
If $\bigcup_{i = 1}^k \Delta^{(n)}_i$ is contained in the critical region $R_{(2,3b,4c)}$, we can scale this corner by $N$ into 
$R'_{(2,3b,4c)}$.
By Lemma \ref{scaleable}, we may exclude this case from consideration.

\subsubsection*{Case 2b}\label{sc2.2th4}
Assume that $\bigcup_{i = 1}^k \Delta^{(n)}_i$ is not contained in the critical region $R_{(2,3b,4c)}$.
We see from Figure \ref{fig:sec6case2b} that $d \geq \sqrt{3}/{2N^2}$. 
\begin{figure}[ht]
\includegraphics[scale=1.5]{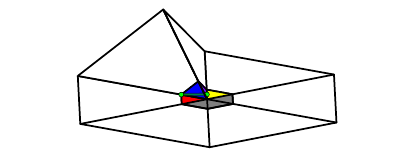}
\caption{The region $R'_{(2,3b,4c)}$ with $R_{(2,3b,4c)}$ colored, and two constraints for Case 2b corresponding to $K^{(1)} \setminus R_{(2,3b,4c)}$ (left) and $K^{(2)}$. In particular, the first constraint corresponds to the unique point on the boundary of $R_{(2,3b,4c)}$ and in the intersection $K^{(1)} \cap K^{(4)}$. The second constraint corresponds to the midpoint of the interval $K^{(2)} \cap K^{(5)} \cap R_{(2,3b,4c)}$. These are at a distance of $\frac{\sqrt{3}}{2}(\frac{1}{N})^2$. We note that by translation, there exists infinitely many similar configurations of constraints.}
\label{fig:sec6case2b}
\end{figure}

By Corollary \ref{T3corPar}, $\alpha^{(2b)} \geq a_n^{(2b)} \exp{-\frac{ 4\sqrt{3}\,s_N}{ 3N^{n-2}}}$.
Putting $\alpha^{(2)} := \alpha^{(2b)}$ and $a_n^{(2)} := a_n^{(2b)}$, we obtain our desired sequence. 
Furthermore, $$\alpha^{(2)} \geq a_n^{(2)} \exp{-\frac{ 4\sqrt{3}\,s_N}{ 3N^{n-2}}} \geq a_n \exp{-\frac{ 4\sqrt{3}\,s_N}{ 3N^{n-2}}}.$$

\subsection*{Case 3}\label{c3}
Suppose $\bigcup_{i = 1}^k \Delta^{(n)}_i$ only intersects one of the three distinguished base 1-cells 
of $K$.
We may assume that this cell is 
$K^{(1)}$
by symmetry. 
We will subdivide this case by considering when $\Dcup$ intersects 
$K^{(5)}$
and
$K^{(6)}$
, either of these exclusively, or neither. 
Defining $\alpha^{(3)} := \min\{\alpha^{(3a)},\alpha^{(3b)},\alpha^{(3c)}\}$ and $a^{(3)} := \min\{a^{(3a)},a^{(3b)},a^{(3c)}\}$, we will obtain our desired sequence. 
Furthermore, we will show $\alpha^{(3)} \geq a_n \exp{-\frac{\sqrt{6}\,s_N}{N^{n-3}}}$. 
\subsubsection*{Case 3a}\label{sc3.1}
If $\bigcup_{i = 1}^k \Delta^{(n)}_i$ intersects both
$K^{(5)}$
and
$K^{(6)}$, a calculation similar to when $N = 2$ reveals that $d \geq \frac{\sqrt{6}-2}{N}$.

\subsubsection*{Case 3b}\label{sc3.2}
If $\bigcup_{i = 1}^k \Delta^{(n)}_i$ intersects 
$K^{(5)}$
or
$K^{(6)}$
exclusively, an argument similar to that of Case 2 holds. Indeed, we may suppose $\Dcup$ intersects 
$K^{(5)}$ by symmetry and exclude when $\Dcup$ is contained in the critical region $R_{(2,3b,4c)}$ by Lemma \ref{scaleable}. 
    We then see from the following diagram that $d \geq \frac{\sqrt{6}}{3N^2}$.
\begin{figure}[ht]
\includegraphics[scale=1.5]{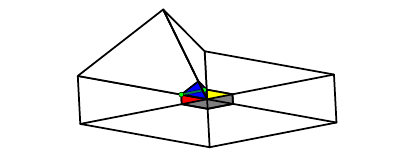}
\caption{The region $R'_{(2,3b,4c)}$ with $R_{(2,3b,4c)}$ colored, and two constraints for Case 3b corresponding to $K^{(1)} \setminus R_{(2,3b,4c)}$ (left) and $K^{(5)}$. In particular, the first constraint corresponds to the unique point on the boundary of $R_{(2,3b,4c)}$ and in the intersection $K^{(1)} \cap K^{(4)}$. The second constraint corresponds to the midpoint of the equilateral triangle $K^{(5)} \cap R_{(2,3b,4c)}$. These are at a distance of $\frac{\sqrt{6}}{3}(\frac{1}{N})^2$.}
\end{figure}

\subsubsection*{Case 3c}\label{sc3.3}

If $\bigcup_{i = 1}^k \Delta^{(n)}_i$ does not intersect
$K^{(5)}$
or
$K^{(6)}$ then $\Dcup$ intersects the two Koch $n$-surfaces $K^{(1)}$ and $K^{(4)}$ of scale $1/N$ meeting at an angle of $\theta = \arccos(1/3)$ (see Figure \ref{fig:K1nK4}). So by lemma \ref{flap}, $d \geq \frac{\sqrt{6}}{3N^3}$.

\begin{figure}[ht]
\includegraphics[scale=1.5]{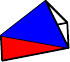}
\caption{The 1-cells $K^{(1)}$ in red (below) and $K^{(4)}$ in blue (above), which form a dihedral angle of $\arccos(1/3)$.}
\label{fig:K1nK4}
\end{figure} 
    
    By Corollary 1, $\alpha^{(3)} = \alpha^{(3c)} \geq a_n^{(3c)} \exp{-\frac{\sqrt{6}\,s_N}{N^{n-3}}} \geq a_n \exp{-\frac{\sqrt{6}\,s_N}{N^{n-3}}}$.
\subsection*{Case 4}\label{c4}
Suppose that $\bigcup_{i = 1}^k \Delta^{(n)}_i$ does not intersect any of the three distinguished base 1-cells 
    of $K$.
We will subdivide this case by considering how many of the 1-cells
$K^{(4)}$, $K^{(5)}$, $K^{(6)}$
    the set $\Dcup$ intersects.
Defining $\alpha^{(4)} := \min\{\alpha^{(4a)},\alpha^{(4b)}\}$ and $a^{(4)} := \min\{a^{(4a)},a^{(4b)}\}$, we obtain our desired sequence. 
Furthermore, we show that $\alpha^{(4)} \geq a_n^{(4)} \exp{-\frac{\sqrt{6}\,s_N}{N^{n-3}}} \geq a_n \exp{-\frac{\sqrt{6}\,s_N}{N^{n-3}}}$. 
\subsubsection*{Case 4a}\label{sc4.1}
If $\bigcup_{i = 1}^k \Delta^{(n)}_i$ intersects $K^{(4)}$, $K^{(5)}$, and $K^{(6)}$, an argument similar to that of Case 2b and 3b holds. Indeed, we may exclude when $\bigcup_{i = 1}^k \Delta^{(n)}_i$ is contained in the uppermost corner $R_{(4a)}$ (see Figure \ref{fig:topcorner}) by Lemma 1. 

\begin{figure}[ht]
    \centering
    \includegraphics[scale=0.35]{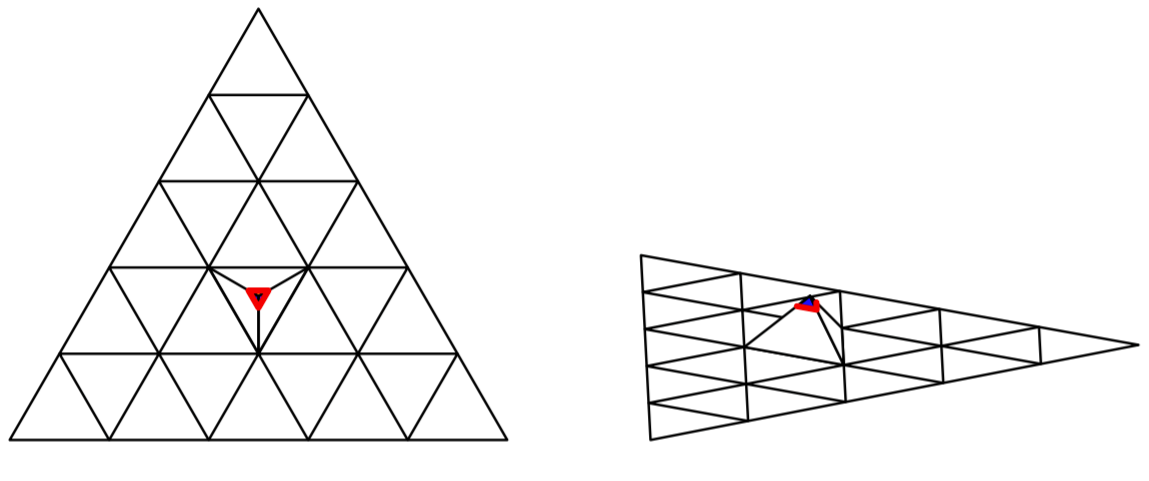} 
    \hspace{20pt}
    \raisebox{25pt}{
    \includegraphics[scale=2]{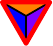}}
    \caption{The critical region $R_{(4a)}$ (when $N = 5$) from above (left), from the side (middle), and zoomed in from above (right); with a thickened red border to aid visibility. Note that $R_{(4a)}$ consists of three $2$-cells, corresponding to $K^{(4)}$, $K^{(5)}$, and $K^{(6)}$.}
    \label{fig:topcorner}
\end{figure}

Now supposing $\Dcup \not\subset R_{(4a)}$, it follows from the symmetries of $K_N$ and from the following diagram that $d \geq \frac{\sqrt{6}-2}{N^2}$. Finding this bound is identical to the calculation when $N = 2$ (see Figure \ref{Case4a_T4}).
\begin{figure}[ht]
\includegraphics{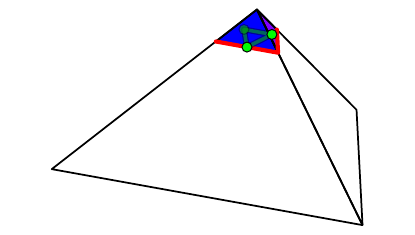}
\caption{The critical region $R_{(4a)}$ for Case 4a in color and the constraints corresponding to $K^{(4)} \setminus R_{(4a)}$, $K^{(5)}$, and $K^{(6)}$. These form a triangle similar, but of scale $\frac{2}{N^2}$, to that formed by the constraints in Theorem 3, Case 3a.}
\label{Case4a_T4}
\end{figure}

In the case where $\bigcup_{i = 1}^k \Delta^{(n)}_i \subseteq R_{(4a)}$, there exists a similarity into $K^{(4)} \cup K^{(5)} \cup K^{(6)}$ of ratio $N$, making the family Case (4a)-scaleable.

\subsubsection*{Case 4b}\label{sc4.2}
Consider when $\bigcup_{i = 1}^k \Delta^{(n)}_i$ intersects only two of
$K^{(4)}$, $K^{(5)}$, $K^{(6)}$, so that $\Dcup$ intersects two Koch $n$-surfaces of scale $1/N$ meeting at an angle of $\theta = \arccos(1/3) - \pi$. By lemma \ref{flap}, $d \geq \frac{\sqrt{6}}{{3}N^3}$.
\subsubsection*{Case 4c}\label{sc4.3} If $\Dcup$ intersects only one of $K^{(4)}$, $K^{(5)}$, $K^{(6)}$,
    then an identical argument to Cases 2b and 3b holds. First, we may suppose $\Dcup$ only intersects $K^{(4)}$ by symmetry. Then, we may exclude when $\Dcup$ is contained in the critical region $R_{(2,3b,4c)}$ since we may scale this corner by $N$ into 
    $R'_{(2,3b,4c)}$. 
    On the other hand, if $\Dcup$ is not contained in $R_{(2,3b,4c)}$, we see from the following diagram that $d \geq \sqrt{3}/2N^2$.
\begin{figure}[ht]
\includegraphics[scale=1.5]{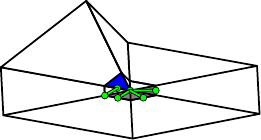}
\caption{The critical region $R_{(2,3b,4c)}$ excluding $K^{(1)}, K^{(2)}, K^{(4)}$ (in color), together with four different configurations of two constraints for Case 4c. These correspond to the regions $K^{(4)}$ and $K \setminus (K^{(1)} \cup K^{(2)} \cup K^{(4)} \cup R_{(2,3b,4c)})$ and yield a distance of $\frac{\sqrt{3}}{2} (\frac{1}{N})^2$. There exist infinitely many of these configurations by translation.}
\end{figure}

\subsubsection*{Case 4d}\label{sc4.4}
If $\Dcup$ does not intersect any of the 1-cells $K^{(4)}, K^{(5)}, K^{(6)}$, we may assume that $\Dcup$ intersects distinct adjacent 1-cells $\Delta^{(1)}_k, \Delta^{(1)}_\ell$ as we try to minimize diameter. If $\Delta^{(1)}_k$ and $\Delta^{(1)}_\ell$ intersect at an edge (forming a dihedral angle of $\theta = 0$), it follows from Lemma \ref{flap} that $d \geq \sqrt{6}/3N^3$. On the other hand, if $\Delta^{(1)}_k$ and $\Delta^{(1)}_\ell$ only intersect at a point $p$ then an argument similar to Case 2b and 3b holds. Indeed, consider the region $R_{(4d)}'$ consisting of the six 1-cells which intersect at $p$. Note that there exists a similar region $R_{(4d)}$ of six $2$-cells which intersect at $p$. 
\begin{figure}[h]
    \centering
    \includegraphics{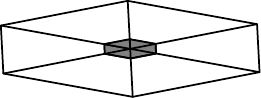}
    \caption{The region $R_{(4d)}$ (in gray) inside $R_{(4d)}'$}
\end{figure}

Now if $\Dcup \subset R'$, we can scale $R_{(4d)}$ by a factor of $N$ into $R_{(4d)}'$. So by Lemma \ref{scaleable}, we may exclude this case from consideration. When $\Dcup \not\subset R_{(4d)}$, we see from the following diagram that $d \geq \sqrt{3}/2N^2$.
\begin{figure}[h]
    \centering
    \includegraphics{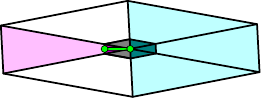}
    \caption{The region $\Delta_k^{(1)}$ in pink (left) with the three possible choices for $\Delta_\ell^{(1)}$ in blue (right), together with a configuration of two constraints corresponding to $\Delta_k^{(k)}$ (left) and a possible $\Delta_\ell^{(1)}$ (right). These are at a distance of $\frac{\sqrt{3}}{2} (\frac{1}{N})^2$. Note that there are infinitely many such configurations by translation.}
\end{figure}

We then see that
$$\alpha^{(4)} = \alpha^{(4b)} \geq a_n^{(4b)} \exp{-\frac{\sqrt{6}\,s_N}{N^{n-3}}} \geq a_n \exp{-\frac{\sqrt{6}\,s_N}{N^{n-3}}}.$$

Henceforth, combining all the above cases, we obtain
\begin{align*}
\hh^s(K) &= \displaystyle\lim_{n \rightarrow \infty} a_n = \displaystyle\lim_{n \rightarrow \infty} \min \{a_n^{(1)},a_n^{(2)},a_n^{(3)},a_n^{(4)}\} = \min \{ \alpha^{(1)},\alpha^{(2)},\alpha^{(3)},\alpha^{(4)}\} \\   
&\geq a_n \min \left\{ \exp{-\frac{4s_N}{N^{n-1}}},\, \exp{-\frac{4\sqrt{3}\,s_N}{3N^{n-2}}},\, \exp{-\frac{\sqrt{6}\,s_N}{N^{n-3}}}\right\}
\\
&= a_n \exp{-\frac{\sqrt{6}\,s_N}{N^{n-3}}}.
\end{align*}

Combining the above inequality with Proposition \ref{p2}, we are lead to the inequality (\ref{T2est}), completing the proof.

\section{Application}\label{sec7}

\indent We now present an application to the theory of Partial Differential Equations over the Koch $N$-crystals.

\subsection{Preliminaries}\label{subsec7.1}

To begin, denote by $\Omega_{_N}:=\textrm{int}(\mathcal{C}_N)\subseteq\mathbb{R\!}^{\,3}$ the interior of the Koch $N$-crystal defined by Definition \ref{3.2} with boundary $\Gamma_{_N}:=K_N$ (as in Definition \ref{kochNsurface}), for $N>2$ with $N\not\equiv 0\,(\bmod\,\,3)$. As discussed at the end of section \ref{sec3}, one has that $\Omega_{_N}$ is an uniform domain whose boundary $\Gamma_{_N}$ is a $s_N$-set with respect to the $s_N$-dimensional Hausdorff measure, where $s_N:=\log (N^2 + 2) / \log(N)$. Given a set $E$, we denote by $L^p(E):=L^p(E,\mu)$ the $L^p$-based space of $\mu$-measurable functions, and we write $\lambda_3$ as the $3$-dimensional Lebesgue measure. Also, for a domain $D$, we denote by $W^{1,p}(D)$ the well-known $p$-Sobolev spaces, for $1\leq p\leq\infty$. When $p=2$, we write $H^1(D):=W^{1,2}(D)$. Finally, by $\overline{\Omega}_{_N}$ we denote the closure of the set $\Omega_{_N}$.\\
\indent For non-Lipschitz domains, a normal derivative may not be well-defined. This is a key point when defining a Robin problem over irregular regions.
However, define the Robin-type bilinear form $(\mathcal{E},D(\mathcal{E}))$ by: $D(\mathcal{E}):=(H^1(\Omega_{_N})\cap C(\overline{\Omega}_{_N}))\times(H^1(\Omega_{_N})\cap C(\overline{\Omega}_{_N}))$, and
\begin{equation}\label{bil-form}
\mathcal{E}(v,w):=\displaystyle\int_{\Omega_{_N}}\nabla v\nabla w\,dx+\displaystyle\int_{\Gamma_{_N}}\beta vw\,d\mathcal{H}^{s_N},
\end{equation}
for $\beta\in L^{\infty}(\Gamma_{_N})$ with
$\displaystyle\textrm{ess}\,{\inf_{x\in\Gamma_{_N}}}\beta(x)\geq b_0$ for some constant $b_0>0$. Then as $\mathcal{H}^{s_N}$ is a $s_N$-Ahlfors measure, it follows from \cite[Remark 6.5]{BIE09}
that $\mathcal{H}^{s_N}$ fulfills the conditions (11) in \cite{AR-WAR03}, and consequently from \cite[Theorem 3.3]{AR-WAR03}, one has that the form
$(\mathcal{E},D(\mathcal{E}))$ is closable, which means that the corresponding Robin problem is well-posed over this domain. Furthermore, it is shown in \cite{BIE09} that there exists a compact trace map from $H^1(\Omega_{_N})$ into $L^2(\Gamma_{_N})$.
Thus, following this argument, we define an appropriate interpretation of the normal derivative over $s$-sets, whose motivation and general
form is taken from \cite{BIE-WAR}.

\begin{definition}\label{def-gen-n-d}
Let $F:\Omega_{_N}\rightarrow\mathbb{R\!}^{\,3}$ be a measurable function.
If there exists $f\in L^1_{loc}(\mathbb{R\!}^{\,3})$ with
$$\int_{\Omega_{_N}}F\nabla v\,dx=\int_{\Omega_{_N}}fv\,dx+\int_{\Gamma_{_N}}v\,d\mathcal{H}^{s_N},$$
for all $v\in C^1_c(\overline{\Omega}_{_N})$, then we say that $\mathcal{H}^{s_N}$ is the \textbf{normal measure} of $F$, and we write
$$N^{\ast}_s(F):=\mathcal{H}^{s_N}.$$ Note that when the normal measure $N^{\ast}_s(F)$ exists, it is unique,
and one has that $dN^{\ast}_s(\phi F)=\phi dN^{\ast}_s(F)$ for all $\phi\in C^1(\overline{\Omega}_{_N})$.
Therefore, if $u\in W^{1,1}_{loc}(\Omega_{_N})$ and $\nabla u$ exits, then we denote by
$$\frac{\partial u}{\partial\nu_{s}}:=N^{\ast}_s(\nabla u)$$ de \textbf{generalized normal derivative} of $u$ in $\Gamma_{_N}$.
\end{definition}

\indent Observe that if $\Omega_{_N}\subseteq\mathbb{R\!}^{\,n}$ is a bounded Lipschitz domain and $s_N=n-1$, then
Definition \ref{def-gen-n-d} agrees with the classical definition of a normal derivative. In the case of non-Lipschitz domains
Definition \ref{def-gen-n-d} is a sort of interpretation of a normal derivative, which we use to write a well-posed Robin problem
over irregular regions (as explained above).\\

\begin{remark}\label{remark-generalized-normal-derivative}
Since $\Omega_{_N}\subseteq\mathbb{R\!}^{\,3}$ is a bounded extension domain (in the sense of Jones \cite{JON}) whose boundary
$\Gamma_{_N}$ is an $s_N$-set with respect to the Hausdorff measure $\mathcal{H}^{s_N}$, then one can follow the approach given by
Hinz, Rozanova-Pierrat and Teplyaev \cite{HINZ-ROZ-TEP22,HINZ-ROZ-TEP21} to give sense to the normal derivative over non-Lipschitz domains.
To be more precise, in \cite[Theorem 7]{HINZ-ROZ-TEP21}, the authors
established the existence of the generalized normal derivative $\frac{\partial u}{\partial\nu_{s}}:=N^{\ast}_s(\nabla u)$ of $u$ over $\Gamma_{_N}$
as the linear bounded functional $\frac{\partial u}{\partial\nu_{s}}\in\left(Tr_{_{\Gamma_{_N}}}(H^1(\mathbb{R\!}^{\,3}))\right)^{\ast}$, given by
$$\left\langle\frac{\partial u}{\partial\nu_{s}},Tr_{_{\Omega_{_N},\Gamma_{_N}}}v\right\rangle_{_{\left(Tr_{_{\Gamma_{_N}}}(H^1(\mathbb{R\!}^{\,3}))\right)^{\ast},
Tr_{_{\Gamma_{_N}}}(H^1(\mathbb{R\!}^{\,3}))}}=\displaystyle\int_{\Omega_{_N}}v\Delta u\,dx+\displaystyle\int_{\Omega_{_N}}\nabla u\nabla v\,dx,$$
for all $u,\,v\in H^1(\Omega_{_N})$, provided that $\Delta u\in L^2(\Omega_{_N})$, where $Tr_{_{\Gamma_{_N}}}:H^1(\mathbb{R\!}^{\,m})\rightarrow
L^2(\Gamma_{_N})$ denotes a bounded trace operator defined as in \cite[equalities (14) and (13)]{HINZ-ROZ-TEP21},
and $Tr_{_{\Omega_{_N},\Gamma_{_N}}}:=Tr_{_{\Gamma_{_N}}}\circ S_{_{\Omega_{_N}}}$, for $Tr_{_{\Omega_{_N}}}:H^1(\mathbb{R\!}^{\,3})\rightarrow
L^2(\Omega_{_N})$ a bounded operator, and $S_{_{\Omega_{_N}}}:H^1(\Omega_{_N})\rightarrow H^1(\mathbb{R\!}^{\,3})$ the bounded extension operator
(e.g. \cite[Theorem 7]{HINZ-ROZ-TEP21}). One has that Definition \ref{def-gen-n-d} is a more general interpretation of a normal
derivative (since it can include domains which may not have the extension property), but the latter formulation
gives a more concrete structure for the normal derivative over classes of bounded domains with (possibly) irregular boundaries.
Since it is not the intention of this paper to go deeper into these subjects, we do not go into further details.
\end{remark}

\indent We now present the following example.

\subsection{The Robin boundary value problem}\label{subsec7.2}

\indent Consider now the realization of the following boundary value problem:
\begin{equation}
\label{1.01}\left\{
\begin{array}{lcl}
-\Delta u\,=\,f\,\,\,\,\,\,\,\,\,\indent\indent\indent\indent\textrm{in}\,\,\,\Omega_{_N},\\[1ex]
\displaystyle\frac{\partial u}{\partial\nu_{s_N}}+\beta u\,=\,g\,\,\,\,\,\indent\indent\,\,\,\textrm{on}\,\,\,\Gamma_{_N}\,,\\
\end{array}
\right.
\end{equation}\\
for $f\in L^p(\Omega_{_N})$,\, $g\in L^q(\Gamma_{_N})$, and $\beta\in L^{\infty}(\Gamma_{_N})$ with
$\displaystyle\textrm{ess}\,{\inf_{^{x\in\Gamma_{_N}}}}\beta(x)\geq b_0$ for some constant $b_0>0$. Then equation (\ref{1.01}) turns out to be a Robin boundary value problem over the Koch $N$-crystal. In fact, in \cite{VELEZ2013-1} it is shown that one can define the Robin problem over more classes of irregular domains, in which the Koch $N$-crystals form family of domains fulfilling the required properties.\\
\indent A function $u\in H^1(\Omega_{_N})$ is said to be a {\bf weak solution} of the Robin problem (\ref{1.01}), if
$$\mathcal{E}_{_N}(u,\varphi)=\displaystyle\int_{\Omega_{_N}}f\varphi\,dx+\displaystyle\int_{\Gamma_{_N}}g\varphi\,d\hh^{s_N}$$
for all $\varphi\in H^1(\Omega_{_N})$, where $\mathcal{E}_{_N}(\cdot,\cdot):=\mathcal{E}(\cdot,\cdot)$ denotes the bilinear form defined by
(\ref{bil-form}), for $D:=\Omega_{_N}$,\, $\partial D:=\Gamma_{_N}$, and $\eta:=\hh^{s_N}$.\\

\indent Since $\Omega_{_N}$ is an uniform domain, $\Gamma_{_N}$ is a $s_N$-set with respect to $\hh^{s_N}$, and $s_N\in(1,3)$,
recalling \cite[Theorem 4.24]{BIE10.1}, one gets that the form $\mathcal{E}_{_N}(\cdot,\cdot)$ is continuous, and coercive. Moreover,
the conclusions in \cite{VELEZ2013-1} imply the following important result.

\begin{theorem}
{\it If $f\in L^p(\Omega_{_N})$ and $g\in L^q(\Gamma_{_N})$ for $p>\frac{3}{2}$ and $q>\frac{s_N}{s_N-1}$, then
the Robin problem (\ref{1.01}) admits a unique weak solution $u\in H^1(\Omega_{_N})$, and there exists a constant
$\delta\in(0,1)$ such that $u\in C^{0,\delta}(\overline{\Omega}_{_N})$, that is, $u$ is globally H\"older continuous. Furthermore, there is a constant $C>0$ (independent of $u$), such that}
$$\|u\|_{_{C^{0,\delta}(\overline{\Omega}_{_N})}}\,\leq\,C\left(\|f\|_{_{p,\Omega_{_N}}}+\|g\|_{_{q,\Gamma_{_N}}}\right).$$
\end{theorem}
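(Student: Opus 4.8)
The plan is to read this statement as an application of the abstract theory of Robin problems over $(\varepsilon,\delta)$-domains with $d$-set boundaries, and to prove it by verifying that the Koch $N$-crystal $\Omega_{_N}$ together with its boundary $\Gamma_{_N}=K_N$ satisfies every structural hypothesis required by \cite{VELEZ2013-1}, then invoking the existence, uniqueness, and regularity conclusions established there. The two geometric facts proved in Section \ref{sec3} are exactly the ones needed: for $N>2$, the interior $\Omega_{_N}$ is a uniform (hence $(\varepsilon,\infty)$-) domain, so by Jones' extension theorem \cite{JON} it is a $W^{1,2}$-extension domain; and $\Gamma_{_N}$ is an $s_N$-set with respect to $\mathcal{H}^{s_N}$, with $s_N\in(1,3)$. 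These play the roles of the hypotheses on the domain and on the boundary measure in the abstract framework.

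First I would establish well-posedness by the Lax--Milgram theorem applied to the symmetric form $(\mathcal{E}_{_N},H^1(\Omega_{_N}))$. Continuity and coercivity of $\mathcal{E}_{_N}$ follow from \cite[Theorem 4.24]{BIE10.1}, whose standing assumptions (a uniform domain with Ahlfors $s_N$-regular boundary, $s_N\in(1,3)$, together with $\beta\geq b_0>0$ a.e.\ on $\Gamma_{_N}$) hold here; the coercivity rests on the compact trace embedding $H^1(\Omega_{_N})\hookrightarrow L^2(\Gamma_{_N})$ supplied by \cite{BIE09} for $s_N$-sets, combined with a Poincar\'e-type argument showing $\|v\|_{L^2(\Omega_{_N})}^2\leq C(\|\nabla v\|_{L^2(\Omega_{_N})}^2+\|v\|_{L^2(\Gamma_{_N})}^2)$. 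It then remains to check that the functional $\varphi\mapsto\int_{\Omega_{_N}}f\varphi\,dx+\int_{\Gamma_{_N}}g\varphi\,d\mathcal{H}^{s_N}$ is bounded on $H^1(\Omega_{_N})$. For the volume term one uses the Sobolev embedding $H^1(\Omega_{_N})\hookrightarrow L^6(\Omega_{_N})$ (valid since $n=3$ and $\Omega_{_N}$ is an extension domain), so $f\in L^{6/5}(\Omega_{_N})$ suffices and $p>\tfrac32>\tfrac65$; for the surface term one uses the trace of $H^1(\Omega_{_N})$ into a Besov space on $\Gamma_{_N}$ that embeds into $L^{r}(\Gamma_{_N})$ for a suitable $r$, whose dual threshold is precisely $q>s_N/(s_N-1)$. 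Lax--Milgram then delivers the unique weak solution $u\in H^1(\Omega_{_N})$.

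The regularity assertion is the substantial part, and here I would invoke the De Giorgi--Nash--Moser machinery as carried out in \cite{VELEZ2013-1}. Interior H\"older continuity of $u$ is classical once $f\in L^p$ with $p>n/2=\tfrac32$. The global statement $u\in C^{0,\delta}(\overline{\Omega}_{_N})$ demands boundary regularity, obtained through a Moser iteration adapted to the Robin condition: the boundary integral is controlled using the two-sided bound $a\,r^{s_N}\leq\mathcal{H}^{s_N}(B(x,r)\cap\Gamma_{_N})\leq b\,r^{s_N}$, while the $(\varepsilon,\delta)$-geometry of $\Omega_{_N}$ supplies the interior chaining needed to upgrade local estimates to a global modulus of continuity. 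The thresholds $p>\tfrac32$ and $q>s_N/(s_N-1)$ are exactly those placing the relevant Morrey-type quantities in the supercritical range that forces a positive exponent $\delta$, and the a priori bound $\|u\|_{C^{0,\delta}(\overline{\Omega}_{_N})}\leq C(\|f\|_{p,\Omega_{_N}}+\|g\|_{q,\Gamma_{_N}})$ emerges from the same iteration together with the $H^1$-estimate furnished by Lax--Milgram.

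The hard part, and the step where the geometry of the earlier sections is genuinely used, will be the boundary regularity: one must show that the Moser/De Giorgi iteration near the fractal boundary converges. This is where the uniform (corkscrew) access to boundary points and the Ahlfors $s_N$-regularity of $\mathcal{H}^{s_N}$ must be combined to control the boundary contributions uniformly across scales. The remaining work is essentially bookkeeping, namely confirming that $\Omega_{_N}$ and $\Gamma_{_N}$ meet the standing hypotheses of \cite{VELEZ2013-1} verbatim, so that its existence, uniqueness, and H\"older-regularity theorems apply directly and yield the stated conclusion.
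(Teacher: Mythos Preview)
Your proposal is correct and follows essentially the same route as the paper: verify that $\Omega_{_N}$ is a uniform domain and $\Gamma_{_N}$ is an $s_N$-set with $s_N\in(1,3)$ (done in Section~\ref{sec3}), cite \cite[Theorem 4.24]{BIE10.1} for continuity and coercivity of $\mathcal{E}_{_N}$, and then invoke the existence, uniqueness, and global H\"older regularity results of \cite{VELEZ2013-1}. The paper's own treatment is in fact terser than yours---it simply states that ``the conclusions in \cite{VELEZ2013-1} imply the following important result'' once the geometric hypotheses are in place---so your additional commentary on Lax--Milgram, the trace embedding, and the De Giorgi--Nash--Moser iteration merely unpacks what is already contained in the cited references.
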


\indent The above result is also valid in the quasi-linear case involving the $p$-Laplace operator, for $6(s_N+2)^{-1}<p<\infty$ (under some modifications on $q$ and $r$; see \cite{VELEZ2013-1}). Recently a generalization of this result has been obtained to the Robin problem involving variable exponents and anisotropic structures. For more details, refer to \cite{BOU-AVS18}.\\
\indent The above results can be considered as generalizations of results obtained in \cite{AV-NYS14,LEW-NYS12,NYS96,NYS94}, where regularity results were obtained
for the Dirichlet problem over classes of non-Lipschitz domains. However, it is important to point out that most of the results in
\cite{AV-NYS14,LEW-NYS12,NYS96,NYS94} were developed over bounded NTA domains whose boundaries are $(n-1)$-sets, while in our case we are allowing
the boundary to be an $s_N$-set for $s_N\in(1,3)$. It is important to mention that NTA domains include the classical Koch snowflake domains and other fractal-like domains whose Hausdorff dimensions may not be $n-1$. However, on the works in \cite{AV-NYS14,LEW-NYS12,NYS96,NYS94}, the authors are assuming the Ahlfors-David regular condition over the boundary, which is equivalent to say that the boundary of the domain has Hausdorff dimension of $n-1$, with such boundary being an $(n-1)$-set. Furthermore, for Robin-type boundary value problems, usually one needs more ``geometrical structure" in
the domains under consideration, in order to have a notion of a normal derivative (as explained at the beginning of this section), and for the
well-posedness.

\section*{Acknowledgements}

\indent We would like to thank Ernesto Ferrer for his help in modeling the first and second iterations of prefractals for the Koch 4-crystal and 5-crystal, seen in Figure \ref{fig:crystals}.
We also thank the referees for their careful read through the paper, and their helpful comments and suggestions.\\[5pt]
\indent The second author is supported by The Puerto Rico Science, Technology and Research Trust (PR-Trust) under agreement number 2022-00014.

\smallskip
\indent\\
\noindent{\bf Disclaimer.}  {\it This content is only the responsibility of the authors and does not necessarily represent the official views of The Puerto Rico Science, Technology and Research Trust}.\\

\end{document}